\documentclass[a4paper,reqno, 11pt]{amsart}
\usepackage[left=2.8cm,right=2.8cm,top=2.86cm,bottom=2.86cm]{geometry}
\usepackage{amsmath,amssymb,latexsym,esint,cite,verbatim,wasysym,mathrsfs}
\usepackage{microtype}
\usepackage{color,enumitem,graphicx}
\usepackage[colorlinks=true,urlcolor=blue, citecolor=red,linkcolor=blue,
linktocpage,pdfpagelabels,bookmarksnumbered,bookmarksopen]{hyperref}
\usepackage[hyperpageref]{backref}
\usepackage[english]{babel}
\renewcommand{\epsilon}{\varepsilon}

\newcommand{\pnorm}[2][]{\if #1'' \left|#2\right|_p \else \left|#2\right|_{#1} \fi}
\hypersetup{linkcolor=blue, colorlinks=true ,citecolor = red}

\DeclareMathOperator*{\loc}{loc}
\newcommand*\diff{\mathop{}\!\mathrm{d}}

\begin{document}
\title[The concentration-compactness principles for $W^{s,p(\cdot)}(\mathbb{R}^N)$] {The concentration-compactness principles for $W^{s,p(\cdot,\cdot)}(\mathbb{R}^N)$ and application}

\author[K. Ho]{Ky Ho}\address{Ky Ho\newline Institute of Fundamental and Applied Sciences, Duy Tan University, Ho Chi Minh City 700000, Vietnam}
\email{hnky81@gmail.com}

\author[Y.-H. Kim]{Yun-Ho Kim$^{*}$}
\address{Yun-Ho Kim$^*$\\
Department of Mathematics Education\\
Sangmyung University\\
Seoul 110-743, Republic of Korea} \email{kyh1213@smu.ac.kr}

\thanks{* Corresponding author.\ E-mail addresses: hnky81@gmail.com, kyh1213@smu.ac.kr}

\date{}
\subjclass[2010]{35B33, 35D30, 35J20, 35R11, 46E35, 49J35.}

\keywords{Fractional $p$-Laplacian; $p(\cdot)$-Laplacian; fractional Sobolev
	spaces with variable exponents; critical imbedding; concentration-compactness principles; variational methods.}

\begin{abstract}
We obtain a critical imbedding and then, concentration-compactness principles for fractional Sobolev spaces with variable exponents. 
As an application of these results, we obtain the existence of many solutions for a class of critical nonlocal problems with variable exponents, which is even new for constant exponent case.

\end{abstract}

\maketitle \numberwithin{equation}{section}
\newtheorem{theorem}{Theorem}[section]
\newtheorem{lemma}[theorem]{Lemma}
\newtheorem{definition}[theorem]{Definition}
\newtheorem{claim}[theorem]{Claim}
\newtheorem{proposition}[theorem]{Proposition}
\newtheorem{remark}[theorem]{Remark}
\newtheorem{corollary}[theorem]{Corollary}
\newtheorem{example}[theorem]{Example}
\allowdisplaybreaks


\newcommand{\abs}[1]{\left\lvert#1\right\rvert}
\newcommand{\norm}[1]{|\!|#1|\!|}
\newcommand{\Norm}[1]{\mathinner{\Big|\!\Big|#1\Big|\!\Big|}}
\newcommand{\curly}[1]{\left\{#1\right\}}
\newcommand{\Curly}[1]{\mathinner{\mathopen\{#1\mathclose\}}}
\newcommand{\round}[1]{\left(#1\right)}
\newcommand{\bracket}[1]{\left[#1\right]}
\newcommand{\scal}[1]{\left\langle#1\right\rangle}
\newcommand{\Div}{\text{\upshape div}}
\newcommand{\dotsr}{\dotsm}
\newcommand{\R}{{\mathbb R}}
\newcommand{\Zero}{{\mathbf0}}
\newcommand{\ra}{\rightarrow}
\newcommand{\ran}{\rangle}
\newcommand{\lan}{\langle}
\newcommand{\ol}{\overline}
\newcommand{\N}{{\mathbb N}}
\newcommand{\e}{{\varepsilon}}
\newcommand{\al}{{\alpha}}
\newcommand{\la}{{\lambda}}

\newcommand{\Assg}[1]{\textup{(g)}}
\newcommand{\AssF}[1]{\textup{(F#1)}}
\newcommand{\AssH}[1]{\textup{(H#1)}}
\newcommand{\AssHJ}[1]{\textup{(HA#1)}}
\newcommand{\AssJ}[1]{\textup{(J#1)}}
\newcommand{\AssG}[1]{\textup{(G#1)}}
\newcommand{\Assf}[1]{\textup{($f$#1)}}
\newcommand{\AssHA}[1]{\textup{(HJ#1)}}
\newcommand{\AssA}[1]{\textup{(A#1)}}
\newcommand{\AssB}[1]{\textup{(B#1)}}
\newcommand{\Assa}{\textup{(a)}}
\newcommand{\Assw}[1]{\textup{($w#1$)}}

\section{Introduction}
Nonlocal equations have been modeled for various problems in real fields, for instance, phase transitions, thin obstacle problem, soft thin films, crystal dislocation, stratified materials, anomalous diffusion, semipermeable membranes and flame propagation, material science, ultra-relativistic limits of quantum mechanics, multiple scattering, minimal surfaces, water waves, etc. After the seminal papers by Caffarelli et al. \cite{Caf-Roq-Sir.2010, Caf-Sal-Sil.2008, Caf-Sil.2007}, problems involving fractional $p$-Laplacian have been intensively studied. On the other hand, various other real fields such as electrorheological fluids and image processing, etc. require partial differential equations with variable exponents (see e.g., \cite{Rad-Rep, R}). Natural solution spaces for those problems are Sobolev spaces with fractional order or variable exponents, which were comprehensively investigated in \cite{DPV} and \cite{DHHR}.

Recently, many authors have been studied the fractional Sobolev spaces with variable exponents and the corresponding nonlocal equations with variable exponents (see e.g., \cite{Kaufmann, Bahrouni.DCDS2018, Bahrouni.JMAA2018, HK}). 
To the authors' best knowledge, though most properties of the classical fractional Sobolev spaces have been extended to the fractional Sobolev spaces with variable exponents, there have no results for the critical Sobolev type imbedding for these spaces. Consequently, there have no results on nonlocal equations with variable critical growth because the critical Sobolev type imbedding is essential in the study of such critical equations. The critical problem was initially studied in the seminal paper by Brezis-Nirenberg \cite{Brezis}, which treated for Laplace equations. 
Since then there have been extensions of \cite{Brezis} in many directions.  Elliptic equations involving critical growth are delicate due to the lack of compactness arising in connection with the variational approach. 
For such problems, the concentration-compactness principles (the CCPs, for short) introduced by P.L. Lions \cite{Lions1, Lions3} and its variant at infinity \cite{BCS,BTW,Chabrowski} have played a decisive role in showing a minimizing sequence or a Palais-Smale sequence is precompact. By using these CCPs or extending them to the Sobolev spaces with fractional order or variable exponents, many authors have been successful to deal with critical problems involving $p$-Laplacian or $p(\cdot)$-Laplacian or fractional $p$-Laplacian, see e.g., \cite{Alv-Bar2013, GaPer, Bonder, Bon-Sai-Sil, Bha-Muk, Ambrosio.2018, Fu, Fu-Zhang, Ho-Sim.2016, HKS} and references therein.


As we mentioned above, there have no results for the critical Sobolev type imbedding for  the fractional Sobolev spaces with variable exponents.  Although the usual critical Sobolev immersion theorem holds in the fractional order or variable exponents setting, we do not know this assertion even in fractional Sobolev spaces with variable exponents defined in bounded domain; see \cite{Kaufmann, Bahrouni.DCDS2018, Bahrouni.JMAA2018, HK}. 
Because of this, our first aim of the present paper is to obtain a critical imbedding from fractional Sobolev spaces with variable exponents into Lebesgue spaces with variable exponents. 
We provide sufficient conditions on the variable exponents such as the log-H\"older type continuity condition to obtain such critical imbedding (Theorem~\ref{Theo.critical.imb}). Thanks to this critical Sobolev imbedding, inspired by \cite{Lions3, Bonder, Bon-Sai-Sil, Ambrosio.2018, HKS}, we then establish two Lions type concentration-compactness principles for fractional Sobolev spaces with variable exponents, which are our second aim (Theorems~\ref{CCP} and \ref{CCP2}). 
As an application of these results, we will obtain the existence of many solutions for the following nonlocal problem with variable exponents 
\begin{equation}\label{eq}
\mathcal{L}u(x)+|u|^{p(x,x)-2}u=f(x,u)+\lambda |u|^{q(x)-2}u \ \ \text{in}\ \ \mathbb{R}^N,
\end{equation}
where the operator $\mathcal{L}$ is defined as
\begin{equation}\label{L}
\mathcal{L}u(x) = 2\ \lim_{\varepsilon \searrow 0} \int_{\{y\in\mathbb{R}^N:\, |y-x|\geq\epsilon\}} \frac{|u(x) - u(y)|^{p(x,y)-2}\, (u(x) - u(y))}{|x - y|^{N+sp(x,y)}}\, \diff y, \ \  x \in \mathbb{R}^N,
\end{equation}
where $s\in (0,1)$, $p\in C(\mathbb{R}^N\times\mathbb{R}^N)$ is symmetric i.e., $p(x,y)=p(y,x)$ for all $(x,y)\in \mathbb{R}^N\times\mathbb{R}^N$ such that $1<p^-:=\underset{(x,y)\in \mathbb{R}^N\times\mathbb{R}^N}{\inf}\, p(x,y)\leq p^+:=\underset{(x,y)\in \mathbb{R}^N\times\mathbb{R}^N}{\sup}p(y,x)<\frac{N}{s}$;  $q\in C(\R^N)$ satisfies $p(x,x)<q(x)\le p^*_s(x):=\frac{Np(x,x)}{N-sp(x,x)}$ for all $x\in \R^N$;  $\lambda$ is a real parameter; and $f:\ \mathbb{R}^N\times\Bbb R \to \Bbb R$ is a Carath\'eodory function of local $p^+$-superlinear and to be specified later.

The main feature of our final consequence in the present paper is to establish the multiplicity result for problem~\eqref{eq} under the critical growth condition $\{x\in \R^N:q(x)=p_s^*(x)\}\neq \phi$, originally introduced 
in \cite{Bonder} for the $p(\cdot)$-Laplacian case, and some conditions on $f$ different from the related works 
\cite{Alv-Bar2013, Fis-Mol-Bis, ZZR2016} (Theorem~\ref{V.main1}). As far as we are aware, there are no existence results about the critical problems in this situation even in the case of constant exponents.

The rest of our paper is organized as follows. In Section~\ref{Pre}, we briefly review some properties of the Sobolev spaces with fractional order or variable exponents. In Section~\ref{Critical.imb}, we establish a critical Sobolev type imbedding for  the fractional Sobolev spaces with variable exponents, which is a key to our arguments. In Section~\ref{CCPs} we establish Lions type concentration-compactness principles for 
fractional Sobolev spaces with variable exponents. In Section~\ref{Application}, we show the existence of many solutions for a superlinear nonlocal problem with variable exponents using genus theory. In Appendix, we give an auxiliary result, which is used to prove our CCPs.

\section{Variable exponent Lebesgue spaces and fractional Sobolev spaces}\label{Pre}
In this section, we briefly review 
 the Lebesgue spaces with variable exponents and the classical fractional Sobolev spaces.

Let $\Omega$ be a Lipschitz domain in $\mathbb{R}^N.$ Denote
$$
C_+(\overline\Omega)=\left\{h\in C(\overline\Omega):
1<\inf_{x\in\overline\Omega}h(x)\leq \sup_{x\in\overline\Omega}h(x)<\infty\right\},
$$
and for $h\in C_+(\overline\Omega),$ denote
$$
h^+=\sup\limits_{x\in \overline\Omega}h(x)\ \  \hbox{and}\ \
h^-=\inf\limits_{x\in \overline\Omega}h(x).
$$
For $p\in C_+(\overline\Omega)$ and a $\sigma$-finite, complete measure $\mu$ in $\Omega,$ define the variable exponent Lebesgue space $L_\mu^{p(\cdot)}(\Omega)$ as
$$
L_\mu^{p(\cdot)}(\Omega) := \left \{ u : \Omega\to\mathbb{R}\  \hbox{is}\  \mu-\text{measurable},\ \int_\Omega |u(x)|^{p(x)} \;\diff\mu < \infty \right \}
$$
endowed with the Luxemburg norm
$$
\norm{u}_{L_\mu^{p(\cdot)}(\Omega)}:=\inf\left\{\lambda >0:
\int_\Omega
\Big|\frac{u(x)}{\lambda}\Big|^{p(x)}\;\diff\mu\le1\right\}.
$$
 When $\mu$ is the Lebesgue measure, we write  $\diff x$, $L^{p(\cdot) }(\Omega) $  and $\norm{u}_{L^{p(\cdot)}(\Omega)}$  instead of $\diff \mu$, $L_\mu^{p(\cdot)}(\Omega)$ and $\norm{u}_{L_\mu^{p(\cdot)}(\Omega)}$, respectively. Set $L_+^{p(\cdot)}(\Omega):=\left\{u\in L^{p(\cdot)}(\Omega): \ u>0 \ \text{a.e. in}\ \Omega\right\}$ and for a Lebesgue measurable
 and positive a.e. function $w : \Omega\to \R$, set $L^{p(\cdot)}(w,\Omega):=L_\mu^{p(\cdot)}(\Omega)$ with $\diff \mu=w(x)\diff x$. Some basic properties of $L_\mu^{p(\cdot)}(\Omega)$ are listed in the next three propositions.

\begin{proposition}{\rm (\cite[Corollary 3.3.4]{DHHR})}\label{est.Deining} Let $\al,\beta\in C_+(\ol{\Omega})$ such that $\al(x)\le \beta(x)$ for all $x\in\ol{\Omega}.$ Then, we have
$$\norm{u}_{L_\mu^{\alpha(\cdot)}(\Omega)}\le
	2\big[1+\mu(\Omega)\big]\norm{u}_{L_\mu^{\beta(\cdot)}(\Omega)},\ \ \forall u\in L_\mu^{\alpha(\cdot)}(\Omega)\cap L_\mu^{\beta(\cdot)}(\Omega).$$
\end{proposition}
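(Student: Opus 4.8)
The plan is to prove Proposition~\ref{est.Deining} by reducing it to the classical embedding $\ell^{\beta(x)} \hookrightarrow \ell^{\alpha(x)}$ type inequality at the level of modulars, and then passing to the Luxemburg norms. First I would recall the standard fact (see \cite[Lemma 3.2.20]{DHHR} or argue directly) that for any measurable function $v$ with $\pnorm[L_\mu^{\beta(\cdot)}(\Omega)]{v}\le 1$ one has the pointwise modular estimate $|v(x)|^{\alpha(x)} \le |v(x)|^{\beta(x)} + 1$ on the set where $|v(x)|\le 1$ and $|v(x)|^{\alpha(x)}\le |v(x)|^{\beta(x)}$ where $|v(x)|> 1$; in either case $|v(x)|^{\alpha(x)}\le |v(x)|^{\beta(x)}+1$ for all $x$ since $\alpha(x)\le\beta(x)$. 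Integrating against $\diff\mu$ gives
\[
\int_\Omega |v(x)|^{\alpha(x)}\diff\mu \le \int_\Omega |v(x)|^{\beta(x)}\diff\mu + \mu(\Omega) \le 1 + \mu(\Omega).
\]

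Next I would normalize: given $u\in L_\mu^{\alpha(\cdot)}(\Omega)\cap L_\mu^{\beta(\cdot)}(\Omega)$ with $u\neq 0$, set $\lambda_0 := \pnorm[L_\mu^{\beta(\cdot)}(\Omega)]{u}$ and apply the above to $v := u/\lambda_0$, which satisfies $\pnorm[L_\mu^{\beta(\cdot)}(\Omega)]{v}\le 1$ (using the unit-ball property of the Luxemburg norm, \cite[Lemma 3.2.4]{DHHR}). This yields $\int_\Omega |u(x)/\lambda_0|^{\alpha(x)}\diff\mu \le 1+\mu(\Omega)$. Now I would invoke the homogeneity-type scaling of the modular: writing $c := 1+\mu(\Omega)\ge 1$ and $K := 2c\,\lambda_0$, I want $\int_\Omega |u(x)/K|^{\alpha(x)}\diff\mu \le 1$. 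Since $K/\lambda_0 = 2c \ge 2 > 1$ and $\alpha(x)\ge 1$, we have $|u(x)/K|^{\alpha(x)} = (\lambda_0/K)^{\alpha(x)}|u(x)/\lambda_0|^{\alpha(x)} \le (1/(2c))|u(x)/\lambda_0|^{\alpha(x)}$ because $(\lambda_0/K)^{\alpha(x)}\le \lambda_0/K = 1/(2c)$ for $\alpha(x)\ge 1$ and $\lambda_0/K\le 1$. Integrating, $\int_\Omega |u(x)/K|^{\alpha(x)}\diff\mu \le \frac{1}{2c}\int_\Omega |u(x)/\lambda_0|^{\alpha(x)}\diff\mu \le \frac{1}{2c}\cdot c = \frac12 \le 1$. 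By the definition of the Luxemburg norm, this gives $\pnorm[L_\mu^{\alpha(\cdot)}(\Omega)]{u}\le K = 2(1+\mu(\Omega))\pnorm[L_\mu^{\beta(\cdot)}(\Omega)]{u}$, which is the claim; the case $u=0$ is trivial.

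The only mild obstacle is bookkeeping the constant so that it matches the factor $2[1+\mu(\Omega)]$ exactly rather than a larger multiple, which is handled above by the slightly wasteful factor of $2$ that absorbs the $\le 1/2$ slack; a more careful argument could replace $2$ by any constant $>1$, but $2$ suffices and is clean. Alternatively, one can simply cite \cite[Corollary 3.3.4]{DHHR} verbatim, since this is a known result in the variable exponent literature; I would present the short self-contained modular argument above for completeness, as it is elementary and keeps the paper self-contained regarding the basic tools used in later sections.
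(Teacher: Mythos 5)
Your proof is correct. Note that the paper does not supply its own proof of this proposition at all; it is quoted verbatim as \cite[Corollary 3.3.4]{DHHR}, and the text treats it as a known fact. Your self-contained argument is the standard one for this embedding and reconstructs the cited result faithfully: the pointwise inequality $|v(x)|^{\alpha(x)}\le|v(x)|^{\beta(x)}+1$ (split on $|v(x)|\le 1$ versus $|v(x)|>1$, using $\alpha(x)\le\beta(x)$), integration against $\mu$ to bound the $\alpha(\cdot)$-modular of the normalized $v=u/\lambda_0$ by $1+\mu(\Omega)$, and then the rescaling with $K=2(1+\mu(\Omega))\lambda_0$ exploiting $\alpha^->1$ so that $(\lambda_0/K)^{\alpha(x)}\le\lambda_0/K$, which brings the modular below $1$ and yields exactly the constant $2[1+\mu(\Omega)]$. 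The only remark worth adding is that the normalization step uses $\rho_\beta(u/\lambda_0)\le 1$, which in this paper is guaranteed by Proposition~\ref{norm-modular}(i) (in fact with equality), so the appeal to the ``unit-ball property'' is fully justified in the setting at hand. Since the paper relies on the citation while you give a short direct argument, your version is genuinely more self-contained but arrives at the same inequality with the same constant; there is no substantive difference in approach to flag.
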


\begin{proposition} \label{norm-modular}
{\rm (\cite{FZ})} Define the modular $\rho:\ L_\mu^{p(\cdot)}(\Omega)\to\mathbb{R}$ as
$$
\rho(u):=\int_\Omega |u|^{p(x)}\;\diff \mu, \ \  \forall u\in
L^{p(\cdot)}(\Omega).
$$
Then, we have the following relations between the norm and modular.
\begin{enumerate}
	\item[\rm{(i)}] For $u\in L_\mu^{p(\cdot)}(\Omega)\setminus\{0\},$  $\lambda=\norm{u}_{L_\mu^{p(\cdot)}(\Omega)}$\ if and only if \ $\rho(\frac{u}{\lambda})=1.$
\item[\rm{(ii)}] $\rho(u)>1$ $(=1;\ <1)$ if and only if \ $\norm{u}_{L_\mu^{p(\cdot)}(\Omega)}>1$ $(=1;\ <1)$,
respectively.
\item[\rm{(iii)}] If $\norm{u}_{L_\mu^{p(\cdot)}(\Omega)}>1$, then $
\norm{u}_{L_\mu^{p(\cdot)}(\Omega)}^{p^-}\le \rho(u)\le
\norm{u}_{L_\mu^{p(\cdot)}(\Omega)}^{p^+}$.
\item[\rm{(iv)}] If $\norm{u}_{L_\mu^{p(\cdot)}(\Omega)}<1$, then $
\norm{u}_{L_\mu^{p(\cdot)}(\Omega)}^{p^+}\le \rho(u)\le
\norm{u}_{L_\mu^{p(\cdot)}(\Omega)}^{p^-}$.
\end{enumerate}
\end{proposition}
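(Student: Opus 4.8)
The plan is to reduce everything to two elementary properties of the modular $\rho$: its behaviour under scalar multiplication, and the continuity together with strict monotonicity of the map $\lambda\mapsto\rho(u/\lambda)$ on $(0,\infty)$. First I would record the pointwise bound $t^{p^-}|v(x)|^{p(x)}\le|tv(x)|^{p(x)}\le t^{p^+}|v(x)|^{p(x)}$ for $t\ge1$, with the inequalities reversed for $0<t\le1$; integrating over $\Omega$ against $\diff\mu$ yields the scaling estimates
\[
t^{p^-}\rho(v)\le\rho(tv)\le t^{p^+}\rho(v)\ \ (t\ge1),\qquad t^{p^+}\rho(v)\le\rho(tv)\le t^{p^-}\rho(v)\ \ (0<t\le1).
\]
Since $p^+<\infty$ and $u\in L_\mu^{p(\cdot)}(\Omega)$ forces $\rho(u)<\infty$, for each fixed $\delta>0$ the integrands $|u/\lambda|^{p(x)}$ with $\lambda\ge\delta$ are dominated by $\max\{\delta^{-p^-},\delta^{-p^+}\}\,|u|^{p(x)}\in L^1_\mu(\Omega)$, so dominated convergence makes $\lambda\mapsto\rho(u/\lambda)$ continuous on $(0,\infty)$ and shows $\rho(u/\lambda)\to0$ as $\lambda\to\infty$; the scaling estimates show this map is strictly decreasing on any interval where it is positive; and if $u\ne0$, then $|u(x)/\lambda|^{p(x)}\uparrow\infty$ on the positive-measure set $\{u\ne0\}$, so monotone convergence gives $\rho(u/\lambda)\to\infty$ as $\lambda\to0^+$.

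With these in hand, (i) becomes an intermediate-value argument: writing $\norm{u}:=\norm{u}_{L_\mu^{p(\cdot)}(\Omega)}$ and $A:=\{\lambda>0:\rho(u/\lambda)\le1\}$, the previous step shows $A$ is a nonempty interval, unbounded above and not all of $(0,\infty)$, so $0<\norm{u}=\inf A<\infty$; strict monotonicity gives $\rho(u/\lambda)>1$ for $\lambda<\norm{u}$ and $\rho(u/\lambda)\le1$ for $\lambda>\norm{u}$, and letting $\lambda\to\norm{u}$ with continuity yields $\rho(u/\norm{u})=1$; conversely, strict monotonicity makes any solution of $\rho(u/\lambda)=1$ unique, hence equal to $\inf A=\norm{u}$. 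Then (ii) follows by testing $\lambda=1$ and using that $\lambda\mapsto\rho(u/\lambda)$ crosses the value $1$ exactly at $\lambda=\norm{u}$ (the case $u=0$ being trivial), and (iii)--(iv) follow by applying the scaling estimates to $v=u/\norm{u}$ with $t=\norm{u}$ and invoking $\rho(u/\norm{u})=1$ from (i).

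I expect the only genuine subtlety to be part (i): one must be sure that the infimum in the definition of the Luxemburg norm is attained and coincides with the unique zero of $\lambda\mapsto\rho(u/\lambda)-1$. This is precisely where the bounds $1<p^-\le p^+<\infty$ are used --- finiteness of $p^+$ for the dominated-convergence continuity and for $\rho(u/\lambda)\to0$, and positivity of $p^-$ for the strict monotonicity and for the blow-up as $\lambda\to0^+$; everything else is bookkeeping with the scaling estimates. One should also treat the degenerate cases $u=0$ (all statements trivial) and $\rho(u)=1$ separately, since the strict inequalities in (ii) rely on the monotonicity being strict near $\lambda=1$.
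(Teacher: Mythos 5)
Your proof is correct, and since the paper cites \cite{FZ} for this proposition without reproducing an argument, the relevant comparison is to the standard Fan--Zhao proof --- which is exactly the scaling-plus-monotonicity-plus-continuity scheme you use. The pointwise bound $t^{p^-}\le t^{p(x)}\le t^{p^+}$ for $t\ge 1$ (reversed for $t\le 1$), integrated against $\diff\mu$, together with the intermediate-value argument for $\lambda\mapsto\rho(u/\lambda)$, is precisely how these norm--modular relations are established there, so your route agrees with the source in every essential step.
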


 \begin{proposition} \label{Holder ineq}{\rm (\cite{FZ, KR})}
	The space $L^{p(\cdot)}(\Omega)$ is a separable, uniformly
	convex Banach space, and its dual space is
	$L^{p'(\cdot)}(\Omega),$ where $1/p(x)+1/p'(x)=1$. For any $u\in
	L^{p(\cdot)}(\Omega)$ and $v\in L^{p'(\cdot)}(\Omega)$, we have
	$$
	\Big|\int_\Omega uv\,\diff x\Big|
	\le
	2\norm{u}_{L^{p(\cdot)}(\Omega)}\norm{v}_{L^{p'(\cdot)}(\Omega)}.
	$$
\end{proposition}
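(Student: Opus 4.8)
The plan is to derive each of the four assertions from pointwise (``modular'') inequalities together with the norm--modular relations of Proposition~\ref{norm-modular}; throughout write $\norm{\cdot}_{p(\cdot)}$ as shorthand for $\norm{\cdot}_{L^{p(\cdot)}(\Omega)}$. I would begin with the H\"older inequality, which also delivers the easy half of the duality claim. For $u,v$ with $\norm{u}_{p(\cdot)},\norm{v}_{p'(\cdot)}\in(0,\infty)$ (the degenerate cases being trivial), apply Young's inequality $ab\le \frac{a^{p(x)}}{p(x)}+\frac{b^{p'(x)}}{p'(x)}$ pointwise to $a=|u(x)|/\norm{u}_{p(\cdot)}$ and $b=|v(x)|/\norm{v}_{p'(\cdot)}$, integrate over $\Omega$, bound $\frac1{p(x)},\frac1{p'(x)}\le 1$, and use $\rho\big(u/\norm{u}_{p(\cdot)}\big)=\rho\big(v/\norm{v}_{p'(\cdot)}\big)=1$ (Proposition~\ref{norm-modular}(i)) together with $\frac1{p(x)}+\frac1{p'(x)}=1$; this yields $\int_\Omega|uv|\,\diff x\le 2\,\norm{u}_{p(\cdot)}\norm{v}_{p'(\cdot)}$.

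Next, for the Banach space structure: the triangle inequality for the Luxemburg norm follows from convexity of $t\mapsto t^{p(x)}$ and the definition of the infimum, while positive homogeneity and (using $p^+<\infty$) definiteness are immediate. Completeness is obtained in the usual way: passing to a subsequence with $\sum_k\norm{u_{k+1}-u_k}_{p(\cdot)}<\infty$, one shows via Proposition~\ref{norm-modular}(iii)--(iv) and Fatou's lemma that $\sum_k|u_{k+1}-u_k|$ has finite modular, hence the $u_k$ converge a.e. and in norm to a limit lying in $L^{p(\cdot)}(\Omega)$. Separability follows because, $p^+$ being finite, truncation combined with approximation of $L^\infty$ functions by simple functions shows simple functions are norm-dense (the modular of the error tends to $0$, then invoke Proposition~\ref{norm-modular}(iv)); restricting to simple functions with rational values on a fixed countable generating algebra of sets of finite measure gives a countable dense subset.

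The main obstacle is uniform convexity. I would first establish that the modular $\rho$ is uniformly convex on its unit ball: given $\e>0$ there is $\delta>0$ such that $\rho(u),\rho(v)\le1$ and $\rho\big(\tfrac{u-v}{2}\big)\ge\e$ force $\rho\big(\tfrac{u+v}{2}\big)\le1-\delta$. This rests on the scalar Clarkson-type inequality $\big|\tfrac{a+b}{2}\big|^{p}+c_{p}\big|\tfrac{a-b}{2}\big|^{p}\le\tfrac12\big(|a|^{p}+|b|^{p}\big)$, valid for all $p\in[p^-,p^+]$ with a constant $c_{p}$ bounded below by some $c_0>0$ on this compact range (here $p^->1$ is essential); integrating it pointwise and splitting $\Omega$ according to whether $\big|\tfrac{u(x)-v(x)}{2}\big|^{p(x)}$ exceeds, say, $\e/2$ produces the claim. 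One then transfers modular uniform convexity to the norm via Proposition~\ref{norm-modular}: if $\norm{u}_{p(\cdot)},\norm{v}_{p(\cdot)}\le1$ and $\norm{u-v}_{p(\cdot)}\ge\e$, then $\rho\big(\tfrac{u-v}{2}\big)$ is bounded below by (iv), whence $\rho\big(\tfrac{u+v}{2}\big)\le1-\delta$, and then $\norm{\tfrac{u+v}{2}}_{p(\cdot)}\le1-\delta'$ by (iii)--(iv) again.

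Finally, for the full duality statement: the map $v\mapsto L_v$, $L_v(u)=\int_\Omega uv\,\diff x$, is a bounded linear injection $L^{p'(\cdot)}(\Omega)\to\big(L^{p(\cdot)}(\Omega)\big)^{*}$ by the H\"older inequality just proved, with injectivity and a reverse norm estimate coming from testing against $u=|v|^{p'(x)-2}v$ (truncated if necessary, noting $|u|^{p(x)}=|v|^{p'(x)}$). Surjectivity is a Radon--Nikodym argument: for $L\in\big(L^{p(\cdot)}(\Omega)\big)^{*}$, the set function $A\mapsto L(\chi_A)$ is a finite signed measure absolutely continuous with respect to $\mu$ — here $p^+<\infty$ guarantees $\norm{\chi_A}_{p(\cdot)}\to0$ as $\mu(A)\to0$ — so it has a density $v\in L^1_{\loc}$; a truncation argument together with Proposition~\ref{norm-modular} shows $v\in L^{p'(\cdot)}(\Omega)$ with $\norm{v}_{p'(\cdot)}\le C\norm{L}$, and the density of simple functions extends the identity $L(u)=\int_\Omega uv\,\diff x$ from simple functions to all of $L^{p(\cdot)}(\Omega)$.
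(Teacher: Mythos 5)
The paper does not prove this proposition at all; it is cited from Fan--Zhao \cite{FZ} and Kov\'a\v{c}ik--R\'akosn\'ik \cite{KR}, so there is no in-paper argument to compare against. Your overall architecture — Young's inequality for the H\"older bound with constant $2$, completeness via absolutely summable series plus Fatou, separability via simple functions (correctly flagging $p^+<\infty$), modular uniform convexity via a scalar Clarkson estimate, and a Radon--Nikodym argument for duality — is the standard textbook route and is correct in outline. However, there is a genuine gap in the uniform-convexity step.

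The scalar inequality you invoke,
\[
\left|\frac{a+b}{2}\right|^{p}+c_{p}\left|\frac{a-b}{2}\right|^{p}\le\frac12\bigl(|a|^{p}+|b|^{p}\bigr),
\]
is \emph{false} for $1<p<2$, for every choice of $c_p>0$. Take $a=1+\epsilon$, $b=1-\epsilon$ with $\epsilon\to0^+$: the left side is $1+c_p\,\epsilon^{p}$, while the right side expands as $1+\tfrac{p(p-1)}{2}\epsilon^{2}+O(\epsilon^{4})$. Since $\epsilon^{p}/\epsilon^{2}=\epsilon^{p-2}\to\infty$ when $p<2$, the inequality fails for $\epsilon$ small. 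The correct pointwise estimate in the regime $p<2$ must carry the difference term with exponent $2$ (not $p$), typically in the form
\[
\left|\frac{a+b}{2}\right|^{p}+c(p)\,\frac{|a-b|^{2}}{(|a|+|b|)^{2-p}}\le\frac12\bigl(|a|^{p}+|b|^{p}\bigr),
\]
or one routes through the second Clarkson inequality expressed in terms of the conjugate exponent. Consequently the proof of modular uniform convexity must split $\Omega$ into $\{x:p(x)\ge2\}$ and $\{x:p(x)<2\}$ and treat the two pieces with different scalar estimates before combining; as written, your single uniform inequality does not exist and the step collapses. With that repair the remainder of your sketch is sound.
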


Let $s\in (0,1)$ and $p\in (1,\infty)$ be constants. Define the fractional Sobolev space $
W^{s,p}(\Omega)$ as
$$
W^{s,p}(\Omega):=\bigg\{u\in L^p(\Omega): \int_{\Omega}\int_{\Omega}\frac{|u(x)-u(y)|^p}{|x-y|^{N+sp}}\diff x\diff y<\infty\bigg\}
$$
endowed with norm
\begin{equation*}
\label{norm}
\|u\|_{s,p,\Omega}:=\left(\int_{\Omega}|u(x)|^p\diff x+\int_{\Omega}\int_{\Omega}\frac{|u(x)-u(y)|^p}{|x-y|^{N+sp}}\diff x \diff y\right)^{1/p}.
\end{equation*}
We recall the following crucial imbeddings.
\begin{proposition}{\rm (\cite{DPV})} \label{imb.frac.const}Let $s\in (0,1)$ and $p\in (1,\infty)$ be such that $sp<N.$ It holds that
	\begin{itemize}
		\item[(i)] $W^{s,p}(\Omega)\hookrightarrow \hookrightarrow L^{q}(\Omega)$ if $\Omega$ is bounded and $1\leq q< \frac{Np}{N-sp}=:p_s^\ast$;
		\item[(ii)]$W^{s,p}(\Omega)\hookrightarrow  L^{q}(\Omega)$ if $p\leq q\leq p_s^\ast.$
	\end{itemize}
\end{proposition}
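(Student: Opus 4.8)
\emph{Proof proposal.} The plan is to follow the classical route (see \cite{DPV}): reduce everything to the whole space $\R^N$ by means of an extension operator, prove the continuous Gagliardo--Nirenberg--Sobolev imbedding $W^{s,p}(\R^N)\hookrightarrow L^{p_s^\ast}(\R^N)$, deduce (ii) by interpolation, and obtain the compactness in (i) from the Riesz--Fr\'echet--Kolmogorov criterion. Throughout I write $[u]_{s,p}^p:=\int_{\R^N}\!\int_{\R^N}\frac{|u(x)-u(y)|^p}{|x-y|^{N+sp}}\,\diff x\,\diff y$ for the Gagliardo seminorm and $u_{B}:=|B|^{-1}\int_B u\,\diff y$ for averages. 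Since $\Omega$ is Lipschitz there is a bounded linear extension operator $E\colon W^{s,p}(\Omega)\to W^{s,p}(\R^N)$ with $(Eu)|_\Omega=u$; when $\Omega$ is bounded, after multiplying by a fixed cut-off $\eta\in C_c^\infty(\R^N)$ with $\eta\equiv1$ on $\Omega$ we may also assume $Eu$ is supported in a fixed ball. Hence it suffices to prove: (a) $\|v\|_{L^{p_s^\ast}(\R^N)}\le C\,[v]_{s,p}$ for all $v\in W^{s,p}(\R^N)$; and (b) every bounded subset of $W^{s,p}(\R^N)$ whose members are supported in a fixed ball is precompact in $L^p(\R^N)$.

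For (a): for $v\in W^{s,p}(\R^N)$, $x\in\R^N$ and $r>0$, Jensen's inequality and insertion of the kernel give
\[
|v(x)-v_{B_r(x)}|^p\le |B_r(x)|^{-1}\int_{B_r(x)}|v(x)-v(y)|^p\,\diff y\le C\,r^{sp}\int_{B_r(x)}\frac{|v(x)-v(y)|^p}{|x-y|^{N+sp}}\,\diff y.
\]
Since $v_{B_r(x)}\to0$ as $r\to\infty$, summing this bound over dyadic radii controls $|v(x)|$ by a power of the Gagliardo kernel evaluated at $x$; optimizing the scale $r$ in the resulting pointwise inequality yields a weak-type estimate for $v$, which the standard truncation (real-interpolation) argument upgrades to the strong bound $\|v\|_{L^{p_s^\ast}(\R^N)}\le C[v]_{s,p}$. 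The hypothesis $sp<N$ enters precisely here, guaranteeing $p_s^\ast\in(p,\infty)$. Combined with the extension step this gives $W^{s,p}(\Omega)\hookrightarrow L^{p_s^\ast}(\Omega)$; then, for $p\le q\le p_s^\ast$, choosing $\theta\in[0,1]$ with $\frac1q=\frac{1-\theta}{p}+\frac{\theta}{p_s^\ast}$,
\[
\|u\|_{L^q(\Omega)}\le\|u\|_{L^p(\Omega)}^{1-\theta}\,\|u\|_{L^{p_s^\ast}(\Omega)}^{\theta}\le C\,\|u\|_{s,p,\Omega},
\]
which proves (ii); if moreover $|\Omega|<\infty$ then $W^{s,p}(\Omega)\hookrightarrow L^q(\Omega)$ for every $1\le q\le p_s^\ast$.

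For (i) (compactness): let $(u_n)$ be bounded in $W^{s,p}(\Omega)$ with $\Omega$ bounded; by the extension step we may regard $(u_n)$ as a bounded sequence in $W^{s,p}(\R^N)$ supported in a fixed ball, so tightness and uniform $L^p$-boundedness in the Riesz--Fr\'echet--Kolmogorov criterion are automatic, and only the equicontinuity of translations, $\sup_n\|u_n(\cdot+h)-u_n\|_{L^p(\R^N)}\to0$ as $|h|\to0$, has to be checked. This follows from
\[
\int_{\R^N}|v(x+h)-v(x)|^p\,\diff x\le C\,|h|^{sp}\,[v]_{s,p}^p,
\]
obtained by the same averaging device: write $v(x+h)-v(x)=\big(v(x+h)-v_{B_r(x+h)}\big)+\big(v_{B_r(x+h)}-v_{B_r(x)}\big)+\big(v_{B_r(x)}-v(x)\big)$ with $r=|h|$, bound the two outer terms by the oscillation estimate above and the middle term using that $B_r(x)$ and $B_r(x+h)$ overlap in a set of measure comparable to $|B_r|$, then integrate in $x$. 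Consequently $(u_n)$ is precompact in $L^p(\Omega)$, hence in $L^q(\Omega)$ for $1\le q\le p$ since $|\Omega|<\infty$, and for $p<q<p_s^\ast$ the interpolation inequality above together with $\sup_n\|u_n\|_{L^{p_s^\ast}(\Omega)}<\infty$ promotes $L^p$-convergence to $L^q$-convergence; the endpoint $q=p_s^\ast$ is genuinely not compact. The routine parts are the extension and the interpolation; the real work is the $\R^N$ Sobolev inequality in (a) — especially passing from the weak-type to the strong estimate — together with the translation-modulus bound used for (i).
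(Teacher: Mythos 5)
This proposition is not proved in the paper at all: it is stated as a citation to Di~Nezza--Palatucci--Valdinoci \cite{DPV}, where it corresponds to the continuous Sobolev embedding (their Theorem~6.5 together with the extension Theorem~5.4) and the compact embedding for bounded domains (their Theorem~7.1). So there is no in-paper argument to compare yours against; what follows evaluates your sketch as a stand-alone proof of the cited result.

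Your outline is correct and follows the standard architecture: extend to $\R^N$ via the Lipschitz extension operator, prove the Gagliardo--Nirenberg--Sobolev inequality $\|v\|_{L^{p_s^\ast}(\R^N)}\le C[v]_{s,p}$, interpolate for (ii), and invoke Riesz--Fr\'echet--Kolmogorov for (i). The translation-modulus estimate $\int_{\R^N}|v(x+h)-v(x)|^p\,\diff x\le C|h|^{sp}[v]_{s,p}^p$ is exactly the ingredient used in \cite{DPV} for compactness, and your three-term decomposition with overlapping balls of radius $r=|h|$ is a clean way to get it. The one place where your route genuinely diverges from \cite{DPV} is step (a): you prove the Sobolev inequality via pointwise oscillation bounds on $|v(x)-v_{B_r(x)}|$, a weak-type estimate obtained by optimizing the scale $r$, and a truncation/real-interpolation upgrade to the strong estimate, whereas \cite{DPV} (Lemma~6.2, Theorem~6.5, following Savin--Valdinoci) uses a discrete covering argument over a dyadic grid of cubes and an elementary combinatorial inequality, avoiding interpolation. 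Both are legitimate; the covering proof is more self-contained and elementary, while the oscillation/weak-type route is shorter to state and reuses the same averaging device you already employ for the translation estimate, which is a nice economy. You correctly identify the two non-routine steps (the weak-to-strong upgrade and the translation bound), and the hypotheses $sp<N$ and boundedness of $\Omega$ enter exactly where you say they do. I see no gap, only the acknowledged places where the sketch would need to be expanded into full detail.
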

\section{The Sobolev spaces $W^{s,p(\cdot,\cdot)}(\Omega)$}\label{Critical.imb}
In this section, we recall the fractional Sobolev spaces with variable exponents that was first introduced in \cite{Kaufmann}, and was then refined in \cite{HK}. Furthermore, we will obtain a critical  Sobolev type imbedding on these spaces.

 Let $\Omega$ be a bounded Lipschitz domain in $\mathbb{R}^N$ or $\Omega=\mathbb{R}^N.$ Throughout this article, we assume that
 \begin{itemize}
 	\item [($\mathcal{P}_1$)] $s\in (0,1)$; $p\in C(\ol{\Omega}\times
\ol{\Omega})$ is uniformly continuous 
and symmetric such that
\begin{equation*}
1 < p^-:=\inf_{(x,y)\in \ol{\Omega}\times
\ol{\Omega}}p(x,y) \le p^+:=\sup_{(x,y)\in \ol{\Omega}\times
\ol{\Omega}}p(x,y) < \frac{N}{s}.
\end{equation*}
\end{itemize}
In the following, for brevity, we write $p(x)$ instead of $p(x,x)$ and with this notation, $p\in C_+(\ol{\Omega}).$
Define
$$
W^{s,p(\cdot,\cdot)}(\Omega):=\bigg\{ u \in L^{p(\cdot)}(\Omega):
\int_{\Omega}\int_{\Omega}
\frac{|u(x)-u(y)|^{p(x,y)}}{|x-y|^{N+sp(x,y)}} \,\diff x \diff y < +
\infty\bigg\}
$$
endowed with the norm
$$
\|u\|_{s,p,\Omega}:=\inf \left \{\lambda>0: M_\Omega\left(\frac{u}{\lambda}\right) <1 \right \},
$$
where $M_\Omega(u):=\int_{\Omega}\left|u\right|^{p(x)}\diff x+
\int_{\Omega}\int_{\Omega}
\frac{|u(x)-u(y)|^{p(x,y)}}{|x-y|^{N+sp(x,y)}}
\,\diff x\diff y.$
Then, $W^{s,p(\cdot,\cdot)}(\Omega)$ is a separable reflexive Banach space (see \cite{Kaufmann,Bahrouni.DCDS2018, Bahrouni.JMAA2018}). On $W^{s,p(\cdot,\cdot)}(\Omega),$ we also make use of the following norm
$$
|u|_{s,p,\Omega}:=\norm{u}_{L^{p(\cdot)}(\Omega)}+[u]_{s,p,\Omega},
$$
where
$$[u]_{s,p,\Omega}:=\inf \left \{\lambda>0:
\int_{\Omega}\int_{\Omega}
\frac{|u(x)-u(y)|^{p(x,y)}}{\lambda^{p(x,y)}|x-y|^{N+sp(x,y)}}
\,\diff x \diff y <1 \right \}.
$$ Note that $\|\cdot\|_{s,p,\Omega}$ and $|\cdot|_{s,p,\Omega}$ are equivalent norms on $W^{s,p(\cdot,\cdot)}(\Omega)$  with the relation
\begin{equation}\label{equivalent.norms}
\frac{1}{2}\|u\|_{s,p,\Omega}\leq |u|_{s,p,\Omega}\leq 2\|u\|_{s,p,\Omega}, \ \  \forall u\in W^{s,p(\cdot,\cdot)}(\Omega).
\end{equation}
In what follows, when $\Omega$ is understood, we just write $\|\cdot\|_{s,p}$, $|\cdot|_{s,p}$ and $[\,\cdot\,]_{s,p}$ instead of $\|\cdot\|_{s,p,\Omega}$, $|\cdot|_{s,p,\Omega}$ and $[\,\cdot\,]_{s,p,\Omega}$, respectively. We also denote the ball in $\mathbb{R}^N$ centered at $z$ with radius $\epsilon$ by $B_\epsilon(z)$ and denote the Lebesgue measure of a set $E\subset\R^N$ by $|E|$. For brevity, we write $B_\epsilon$ and $B_\epsilon^c$ instead of $B_\e(0)$ and $\R^N\setminus B_\e(0)$, respectively.
\begin{proposition}[{\rm\cite{HK}}] \label{norm-modular2} On $W^{s,p(\cdot,\cdot)}(\Omega)$ it holds that
		\begin{enumerate}
		\item[\rm{(i)}] for $u\in W^{s,p(\cdot,\cdot)}(\Omega)\setminus\{0\},$  $\lambda=\|u\|_{s,p}$ if and only if \ $M_\Omega(\frac{u}{\lambda})=1;$
		\item[\rm{(ii)}] $M_\Omega(u)>1$ $(=1;\ <1)$ if and only if \ $\|u\|_{s,p}>1$ $(=1;\ <1)$,
		respectively;	
		\item[\rm{(iii)}] if $\|u\|_{s,p}\geq 1$, then $
		\|u\|_{s,p}^{p^-}\le M_\Omega(u)\le
		\|u\|_{s,p}^{p^+}$;
		\item[\rm{(iv)}] if $\|u\|_{s,p}<1$, then $
		\|u\|_{s,p}^{p^+}\le M_\Omega(u)\le
		\|u\|_{s,p}^{p^-}$.
	\end{enumerate}
\end{proposition}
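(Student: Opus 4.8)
The plan is to deduce all four items from two elementary facts about the modular $M_\Omega$. The first is its behaviour under scalar multiplication: for $u\in W^{s,p(\cdot,\cdot)}(\Omega)$ and $t>0$,
\[M_\Omega(tu)=\int_\Omega t^{p(x)}|u|^{p(x)}\diff x+\int_\Omega\int_\Omega t^{p(x,y)}\frac{|u(x)-u(y)|^{p(x,y)}}{|x-y|^{N+sp(x,y)}}\diff x\diff y,\]
and since every exponent appearing here lies in $[p^-,p^+]$ (with $p^+<\infty$ by $(\mathcal{P}_1)$), this gives $t^{p^-}M_\Omega(u)\le M_\Omega(tu)\le t^{p^+}M_\Omega(u)$ whenever $t\ge 1$, and the reversed estimates $t^{p^+}M_\Omega(u)\le M_\Omega(tu)\le t^{p^-}M_\Omega(u)$ whenever $0<t\le 1$. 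The second fact is that, for fixed $u\neq0$, the one-variable function $\varphi(\lambda):=M_\Omega(u/\lambda)$ on $(0,\infty)$ is continuous and strictly decreasing, with $\varphi(\lambda)\to\infty$ as $\lambda\to0^+$ and $\varphi(\lambda)\to0$ as $\lambda\to\infty$.

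To establish item (i), I would first verify these properties of $\varphi$: strict monotonicity because $u\neq0$ in $L^{p(\cdot)}(\Omega)$ forces $\int_\Omega|u/\lambda|^{p(x)}\diff x$ to be strictly positive and strictly decreasing in $\lambda$ (the Gagliardo part being nonincreasing in $\lambda$); continuity by dominated convergence on a neighbourhood of any fixed $\lambda_0>0$; and the two limits from the scaling estimates above. It then follows that there is a unique $\lambda_0>0$ with $\varphi(\lambda_0)=1$ and that $\{\lambda>0:M_\Omega(u/\lambda)<1\}=(\lambda_0,\infty)$; by the very definition of $\|\cdot\|_{s,p}$ this is exactly the claim $\|u\|_{s,p}=\lambda_0\iff M_\Omega(u/\lambda_0)=1$.

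Items (ii)--(iv) then follow formally. For (ii), the case $u=0$ is trivial, and for $u\neq0$ one writes $\lambda_0=\|u\|_{s,p}$, so that $M_\Omega(u)=\varphi(1)$ and $\varphi(\lambda_0)=1$; strict monotonicity of $\varphi$ gives $M_\Omega(u)>1\iff 1<\lambda_0$, and similarly with $>$ replaced by $=$ or $<$. For (iii) and (iv), the case $u=0$ is trivial and $\|u\|_{s,p}=1$ is already covered by (ii); otherwise set $\lambda_0=\|u\|_{s,p}$, so that $M_\Omega(u/\lambda_0)=1$ by (i), and apply the scaling estimates to $u=\lambda_0\cdot(u/\lambda_0)$ with $t=\lambda_0$. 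This yields $\lambda_0^{p^-}\le M_\Omega(u)\le\lambda_0^{p^+}$ when $\lambda_0>1$, which is (iii), and $\lambda_0^{p^+}\le M_\Omega(u)\le\lambda_0^{p^-}$ when $\lambda_0<1$, which is (iv).

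The only point that needs genuine care is the continuity of $\varphi$ and, relatedly, the fact that the infimum defining $\|u\|_{s,p}$ is attained and equals the unique root of $\varphi=1$ — that is, that the sublevel set $\{\varphi<1\}$ is precisely the open half-line to the right of that root. This is where the dominated convergence argument and the strict positivity of the $L^{p(\cdot)}$-part of the modular are used; everything else is bookkeeping with the bounds $p^-\le p(\cdot,\cdot)\le p^+$, exactly along the lines of the proof of the analogous Proposition~\ref{norm-modular} in the Lebesgue-space setting.
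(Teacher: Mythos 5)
Your proof is correct. The paper itself does not prove this proposition (it cites \cite{HK}); your argument --- showing that $\varphi(\lambda)=M_\Omega(u/\lambda)$ is a continuous, strictly decreasing function of $\lambda$ on $(0,\infty)$ with $\varphi(0^+)=\infty$ and $\varphi(\infty)=0$, so that it has a unique root identified with $\|u\|_{s,p}$, and then invoking the scaling bounds $t^{p^-}M_\Omega(u)\le M_\Omega(tu)\le t^{p^+}M_\Omega(u)$ for $t\ge1$ (reversed for $0<t\le1$) --- is exactly the standard route, parallel to the argument behind Proposition~\ref{norm-modular} in the $L^{p(\cdot)}(\Omega)$ setting, so there is nothing substantive to compare or criticize.
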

\begin{theorem}[Subcrtitical imbeddings, \cite{HK}]\label{Theo.subcritical.imb}
	It holds that
	\begin{itemize}
		\item [(i)] $ W^{s,p(\cdot,\cdot)}(\Omega) \hookrightarrow \hookrightarrow L^{r(\cdot)}(\Omega),$
		if $\Omega$ is a bounded Lipschitz domain and $r\in C_+(\ol{\Omega})$ such that $r(x)<\frac{Np(x)}{N-sp(x)}=:p^*_s(x)$ for all $x\in \ol\Omega;$
		\item [(ii)] $W^{s,p(\cdot,\cdot)}(\mathbb{R}^N)\hookrightarrow L^{r(\cdot)}(\mathbb{R}^N)$ for any uniformly continuous function $r\in C_+(\mathbb{R}^N)$ satisfying $p(x)\leq r(x)$ for all $x\in\mathbb{R}^N$ and $\inf_{x\in\mathbb{R}^N}(p_s^\ast(x)-r(x))>0$;
		\item [(iii)] $W^{s,p(\cdot,\cdot)}(\mathbb{R}^N)\hookrightarrow\hookrightarrow L_{\loc}^{r(\cdot)}(\mathbb{R}^N)$ for any $r\in C_+(\mathbb{R}^N)$ satisfying  $r(x)< p_s^\ast(x)$ for all $x\in\mathbb{R}^N.$
	\end{itemize}
\end{theorem}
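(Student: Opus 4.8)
The plan is to reduce all three assertions to the \emph{constant}-exponent fractional Sobolev imbeddings of Proposition~\ref{imb.frac.const} by a localization argument, the bridge being a ``loss of fractional order'' inclusion. First I would record the elementary fact that for any bounded set $B\subseteq\R^N$, any $t\in(0,s)$ and $p^-_B:=\inf_{B\times B}p$ one has the continuous inclusion $W^{s,p(\cdot,\cdot)}(B)\hookrightarrow W^{t,p^-_B}(B)$. This follows by writing, for $t<s$,
$$\frac{|u(x)-u(y)|^{p^-_B}}{|x-y|^{N+tp^-_B}}=\left(\frac{|u(x)-u(y)|^{p(x,y)}}{|x-y|^{N+sp(x,y)}}\right)^{\!p^-_B/p(x,y)}\!\!|x-y|^{\theta(x,y)},\qquad \theta(x,y)=N\Big(\tfrac{p^-_B}{p(x,y)}-1\Big)+p^-_B(s-t),$$
and applying Young's inequality: because $s>t$, the residual power of $|x-y|$ stays $>-N$ uniformly on $B\times B$, hence is integrable there, while the $L^{p^-_B}$--part of the resulting $W^{t,p^-_B}$--modular is controlled by the $L^{p(\cdot)}$--part via Proposition~\ref{est.Deining}.

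For (i), cover the compact set $\overline\Omega$ by finitely many sets $B_i=B_\rho(z_i)\cap\Omega$ of a common small radius $\rho$. Since $p_s^\ast-r$ is continuous and positive on the compact set $\overline\Omega$, $\inf_{\overline\Omega}(p_s^\ast-r)>0$; using this together with the uniform continuity of $p$ and of $r$, one may take $\rho$ so small, and then $t<s$ so close to $s$, that $r^+_{B_i}:=\sup_{B_i}r<(p^-_{B_i})^\ast_t$ for every $i$. On each $B_i$ we then have the chain
$$W^{s,p(\cdot,\cdot)}(\Omega)\hookrightarrow W^{s,p(\cdot,\cdot)}(B_i)\hookrightarrow W^{t,p^-_{B_i}}(B_i)\hookrightarrow\hookrightarrow L^{r^+_{B_i}}(B_i)\hookrightarrow L^{r(\cdot)}(B_i),$$
the compact inclusion being Proposition~\ref{imb.frac.const}(i) and the last one Proposition~\ref{est.Deining}. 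Gluing the finitely many pieces, with the help of Proposition~\ref{norm-modular2} and subadditivity of the modular, yields the continuous imbedding; for compactness, a bounded sequence in $W^{s,p(\cdot,\cdot)}(\Omega)$ admits (diagonalizing over the \emph{finitely} many $i$) a subsequence converging in each $L^{r^+_{B_i}}(B_i)$, hence in each $L^{r(\cdot)}(B_i)$, hence in $L^{r(\cdot)}(\Omega)$. Statement (iii) is then immediate: given a compact $K\subset\R^N$, pick a bounded Lipschitz $\Omega\supset K$; the restriction $W^{s,p(\cdot,\cdot)}(\R^N)\hookrightarrow W^{s,p(\cdot,\cdot)}(\Omega)$ is continuous, and $r<p_s^\ast$ on $\overline\Omega$, so (i) applies on $\Omega$ and gives $W^{s,p(\cdot,\cdot)}(\R^N)\hookrightarrow\hookrightarrow L^{r(\cdot)}(\Omega)\hookrightarrow L^{r(\cdot)}(K)$.

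For (ii), the inclusion $W^{s,p(\cdot,\cdot)}(\R^N)\hookrightarrow L^{p(\cdot)}(\R^N)$ is built into the definition, so setting $\delta:=\inf_{\R^N}(p_s^\ast-r)>0$ and splitting $\{|u|\le1\}$ versus $\{|u|>1\}$ (using $|u|^{r(x)}\le|u|^{p(x)}$ on the first set and $|u|^{r(x)}\le|u|^{(p_s^\ast-\delta)(x)}$ on the second), it suffices to bound $\int_{\{|u|>1\}}|u|^{(p_s^\ast-\delta)(x)}\diff x$ by a constant whenever $\|u\|_{s,p}\le1$, noting that then $|\{|u|>1\}|\le M_{\R^N}(u)\le1$. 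I would decompose $\R^N$ into unit cubes $\{Q_j\}$ with doubled cubes $\{\widetilde Q_j\}$ of bounded overlap; by the \emph{uniform} continuity of $p$ the oscillation/radius control needed in the argument of (i) is uniform in $j$, and by translation invariance the constants in the resulting per-cube estimate $W^{s,p(\cdot,\cdot)}(\widetilde Q_j)\hookrightarrow L^{\sigma(p_s^\ast-\delta)(\cdot)}(\widetilde Q_j)$ (with a fixed $\sigma>1$ chosen small enough that $\sigma(p_s^\ast-\delta)$ remains below the local critical exponent $(p^-_{\widetilde Q_j})^\ast_t$) are uniform in $j$. One then estimates $\int_{Q_j\cap\{|u|>1\}}|u|^{(p_s^\ast-\delta)(x)}\diff x$ by H\"older's inequality in terms of a power of $\|u\|_{s,p,\widetilde Q_j}$ times a power of $|\widetilde Q_j\cap\{|u|>1\}|$, and sums over $j$ using $\sum_j M_{\widetilde Q_j}(u)\le C\,M_{\R^N}(u)$, $\sum_j|\widetilde Q_j\cap\{|u|>1\}|\le C\,|\{|u|>1\}|$, Proposition~\ref{norm-modular2}, and a discrete H\"older inequality.

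I expect the summation over the cubes in (ii) to be the main obstacle. The estimates produced per cube by the loss-of-order step and by passing between constant and variable exponents on bounded sets carry additive constants comparable to $|\widetilde Q_j|$ and powers $\|u\|_{s,p,\widetilde Q_j}^{\gamma}$ in which $\gamma$ need not exceed $p^+$, so a careless bound cannot be summed over infinitely many cubes; this is exactly why one first confines the argument to the small-measure set $\{|u|>1\}$ and why the auxiliary exponent $\sigma$, the cube size, and the lost order $s-t$ must be fixed in the right order so that the relevant powers of $\|u\|_{s,p,\widetilde Q_j}$ and of $|\widetilde Q_j\cap\{|u|>1\}|$ pair up into convergent series. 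Parts (i) and (iii), by contrast, are routine once the loss-of-order inclusion is available.
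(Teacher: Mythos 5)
The paper does not prove this theorem at all; it is quoted verbatim from \cite{HK}. So the only internal point of comparison is the paper's own proof of the \emph{critical} imbedding, Theorem~\ref{Theo.critical.imb}, which has the same covering/cube structure. Against that, here is what you do differently and where you are solid or soft.

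Your ``loss of fractional order'' lemma $W^{s,p(\cdot,\cdot)}(B)\hookrightarrow W^{t,p^-_B}(B)$ ($t<s$, $B$ bounded) is correct and is exactly the right replacement for the paper's device. In Theorem~\ref{Theo.critical.imb} the paper keeps the full order $s$ and instead bounds the multiplicative weight $|x-y|^{-s(p(x,y)-p_i)}$ by a constant, which is precisely what the log-H\"older hypothesis \eqref{LH} buys; Theorem~\ref{Theo.subcritical.imb} does not assume log-H\"older, so one \emph{must} give up some order, and your Young-inequality computation (the residual exponent is $-N+p^-_B(s-t)\,\tfrac{p}{p-p^-_B}\ge -N+p^-_B(s-t)>-N$, with the coefficient $\tfrac{p-p^-_B}{p}$ taming the degenerate case $p\downarrow p^-_B$) shows the price is only an additive constant proportional to $|B|$. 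For a finite cover, that is harmless, so your proofs of (i) and (iii) go through; (iii) $\Rightarrow$ (i) via a bounded Lipschitz $\Omega\supset K$ is fine because restriction from $\R^N$ to $\Omega$ only decreases $M_\Omega$.

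Part (ii) is where your sketch is genuinely soft, and I think you chose a harder road than necessary. Your $\{|u|\le1\}$ versus $\{|u|>1\}$ split, followed by per-cube H\"older against a power of $|\widetilde Q_j\cap\{|u|>1\}|$, is plausible but you never pin down the exponents, and the reader cannot tell whether the resulting series converges. The cleaner route is the one the paper uses in Case 2 of the proof of Theorem~\ref{Theo.critical.imb}, and it works verbatim in your setting once you have the loss-of-order lemma. Normalize $v=u/\|u\|_{s,p}$ so that $M_{\R^N}(v)=1$, hence $M_{Q_i}(v)\le 1$ and $\|v\|_{s,p,Q_i}\le 1$ for every small cube $Q_i$. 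Your loss-of-order step, applied to $v/[v]_{s,p,Q_i}$, yields a \emph{norm-to-norm} bound $[v]_{t,p^-_i,Q_i}\le C[v]_{s,p,Q_i}$ with $C$ uniform in $i$ (the additive modular constant you are worrying about gets absorbed here: the normalized modular is $\le C_1+C_2$, so the norm is $\le (C_1+C_2)^{1/p^-_i}$). Chain this with extension, the constant-exponent Sobolev imbedding into $L^{r_i}$ (where $r_i:=\sup_{Q_i}r<(p^-_i)^\ast_t$ for small cubes and $t$ close to $s$, uniformly in $i$ by uniform continuity of $p$ and $r$ and the strict gap $\inf(p_s^\ast-r)>0$), and Proposition~\ref{est.Deining}, to get $\|v\|_{L^{r(\cdot)}(Q_i)}\le C\|v\|_{s,p,Q_i}$ uniformly. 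Finally use the hypothesis $p\le r$, which for small cubes gives $p_i^+\le r_i^-$, and the case split $\|v\|_{L^{r(\cdot)}(Q_i)}\gtrless 1$ together with Propositions~\ref{norm-modular}, \ref{norm-modular2}, to conclude $\int_{Q_i}|v|^{r(x)}\,\diff x\le C\,M_{Q_i}(v)$; summing and using $\sum_i M_{Q_i}(v)\le M_{\R^N}(v)=1$ finishes. This avoids the $\{|u|>1\}$ split entirely and turns your ``main obstacle'' into a one-line modular summation. I would not call your sketch of (ii) wrong, but as written it is not a proof, whereas the modular-summation version closes the argument and is the one you should adopt.
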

The next critical imbedding is our first main result.
\begin{theorem}[Critical imbedding]\label{Theo.critical.imb}
Let $\Omega$ be a bounded Lipschitz domain in $\mathbb{R}^N$ or $\Omega=\mathbb{R}^N$. Let $(\mathcal{P}_1)$ hold. Furthermore, let the variable exponent $p$ satisfy the following log-H\"older type continuity condition
\begin{equation}\label{LH}
\inf_{\epsilon>0}\ \underset{\underset{0<|x-y|<1/2}{(x,y)\in\Omega\times\Omega}}{\sup} \left|p(x,y)-p_{\Omega_{x,\epsilon}\times \Omega_{y,\epsilon}}^-\right|\log \frac{1}{|x-y|} <\infty,
\end{equation}
where $\Omega_{z,\epsilon}:=B_{\e}(z)\cap \Omega$ for $z\in\Omega$ and $\epsilon>0$, and $p_{\Omega_{x,\epsilon}\times \Omega_{y,\epsilon}}^-:=\underset{(x',y')\in \Omega_{x,\epsilon}\times \Omega_{y,\epsilon}}{\inf}p(x',y').$ Let $q:\overline{\Omega}\to\mathbb{R}$ be a function satisfying
\begin{itemize}
	\item [{\rm($\mathcal{Q}_1$)}] $q\in C_+(\ol{\Omega})$ such that for any $x\in \Omega$, there exists $\e=\e (x)>0$ such that
	\begin{equation}\label{supq}
	\sup_{y\in \Omega_{x,\epsilon}}q(y)\le \frac{N\inf_{(y,z)\in \Omega_{x,\epsilon}\times \Omega_{x,\epsilon}}p(y,z)}{N-s\inf_{(y,z)\in \Omega_{x,\epsilon}\times \Omega_{x,\epsilon}}p(y,z)}.
	\end{equation}
	In addition, when  $\Omega=\mathbb{R}^N$, $q$ is uniformly continuous and  $p(x)<q(x)$ for all $x\in \mathbb{R}^N$.
\end{itemize}
Then, it holds that
\begin{equation}\label{critical.embedding}
W^{s,p(\cdot,\cdot)}(\Omega)  \hookrightarrow
L^{q(\cdot)}(\Omega).
\end{equation}
\end{theorem}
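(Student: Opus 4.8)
The plan is to argue locally: on a suitable small neighbourhood of each point the exponent $p$ is almost constant, so I freeze it to its infimum there, compare the variable--exponent Gagliardo modular with the resulting constant--exponent one by means of the log-H\"older condition \eqref{LH}, and then quote the classical fractional Sobolev embedding of Proposition~\ref{imb.frac.const}(ii). By homogeneity of the Luxemburg norms it is enough to produce a constant $C$, independent of $u$, with $\int_\Omega|u|^{q(x)}\diff x\le C$ whenever $M_\Omega(u)=1$ (equivalently $\|u\|_{s,p}=1$, by Proposition~\ref{norm-modular2}); and we may assume $q\ge p$ on $\overline\Omega$, since on $\{q\le p\}$ Proposition~\ref{est.Deining} already gives $\|u\|_{L^{q(\cdot)}(\{q\le p\})}\le C\|u\|_{L^{p(\cdot)}(\Omega)}\le C\|u\|_{s,p}$ (and for $\Omega=\mathbb{R}^N$ this set is empty by $(\mathcal{Q}_1)$).

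\emph{Localisation.} Using \eqref{LH}, I would first fix $\epsilon_0>0$ and $C_0<\infty$ such that $|p(x,y)-p^-_{\Omega_{x,\epsilon_0}\times\Omega_{y,\epsilon_0}}|\log\frac{1}{|x-y|}\le C_0$ for all $x,y\in\Omega$ with $0<|x-y|<\tfrac12$. For $x\in\Omega$ set $r_x:=\tfrac14\min\{\epsilon(x),\epsilon_0,\tfrac12\}$, $D_x:=B_{r_x}(x)\cap\Omega$ and $\bar p_x:=\inf_{D_x\times D_x}p$. Then $D_x$ is a bounded Lipschitz domain of diameter $<\tfrac12$, and combining $(\mathcal{Q}_1)$ with the monotonicity of $t\mapsto\frac{Nt}{N-st}$ on $(0,N/s)$ one gets $q\le(\bar p_x)^{*}_s:=\frac{N\bar p_x}{N-s\bar p_x}$ on $D_x$. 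I would cover $\Omega$ by a countable subfamily $\{D_{x_i}\}_i$ of bounded overlap (Besicovitch; finitely many if $\Omega$ is bounded), so that $\int_\Omega|u|^{q(x)}\diff x\le\sum_i\int_{D_{x_i}}|u|^{q(x)}\diff x$ while $\sum_i|D_{x_i}|\lesssim|\Omega|$ and $\sum_i m_i(u)\lesssim M_\Omega(u)$, where $m_i(u):=\int_{D_{x_i}}|u|^{p(x)}\diff x+\int_{D_{x_i}}\!\int_{D_{x_i}}\frac{|u(x)-u(y)|^{p(x,y)}}{|x-y|^{N+sp(x,y)}}\diff x\diff y$. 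When $\Omega=\mathbb{R}^N$, uniform continuity of $p$ and $q$ lets the radius $r_x$ and the local constants below be taken uniform in $x$, and one additionally invokes $p(x)<q(x)$, Theorem~\ref{Theo.subcritical.imb}(ii) away from $\{q=p^{*}_s\}$, and the finiteness of $|\{|u|>1\}|\le M_{\mathbb{R}^N}(u)$, so that the sum over infinitely many pieces stays finite.

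\emph{Freezing and the seminorm comparison --- the crux.} Fix a piece $D:=D_{x_i}$ and write $\bar p:=\bar p_{x_i}$, so $1<p^-\le\bar p\le p(\cdot)$ on $\overline D$, $q\le\bar p^{*}_s$ on $\overline D$, and $s\bar p<N$. Since $q\ge p\ge\bar p$, one has $|u|^{q(x)}\le|u|^{p(x)}$ on $\{|u|\le1\}$ and $|u|^{q(x)}\le|u|^{\bar p^{*}_s}$ on $\{|u|>1\}$, hence $\int_D|u|^{q(x)}\diff x\le\int_D|u|^{p(x)}\diff x+\int_D|u|^{\bar p^{*}_s}\diff x$, and Proposition~\ref{imb.frac.const}(ii) together with $\bar p\le p(\cdot)$ on $D$ gives
\[
\int_D|u|^{\bar p^{*}_s}\diff x\le C\Big(\int_D|u|^{p(x)}\diff x+|D|+\int_D\!\int_D\frac{|u(x)-u(y)|^{\bar p}}{|x-y|^{N+s\bar p}}\diff x\diff y\Big)^{N/(N-s\bar p)} .
\]
Thus the whole matter reduces to the estimate
\[
\int_D\!\int_D\frac{|u(x)-u(y)|^{\bar p}}{|x-y|^{N+s\bar p}}\diff x\diff y\ \le\ C_1\,m_i(u)+C_2|D| .
\]
To prove it, set $\delta(x,y):=p(x,y)-\bar p\ge0$. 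Since $D\subseteq\Omega_{x,2r_{x_i}}\cap\Omega_{y,2r_{x_i}}$ for $x,y\in D$ and $2r_{x_i}\le\epsilon_0$, one has $p^-_{\Omega_{x,\epsilon_0}\times\Omega_{y,\epsilon_0}}\le\bar p\le p(x,y)$, so $\delta(x,y)\le p(x,y)-p^-_{\Omega_{x,\epsilon_0}\times\Omega_{y,\epsilon_0}}$ and therefore $\delta(x,y)\log\frac{1}{|x-y|}\le C_0$ for all distinct $x,y\in D$. Fix $\kappa\in(0,1)$ and split $D\times D$ according to whether $t:=|u(x)-u(y)|\ge|x-y|^{s+\kappa}$ or not. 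On the first set, the identity
\[
\frac{t^{\bar p}}{|x-y|^{N+s\bar p}}=\Big(\frac{|x-y|^s}{t}\Big)^{\delta(x,y)}\frac{t^{p(x,y)}}{|x-y|^{N+sp(x,y)}}
\]
and $|x-y|<1$ give $\big(\frac{|x-y|^s}{t}\big)^{\delta(x,y)}\le|x-y|^{-\kappa\delta(x,y)}=e^{\kappa\delta(x,y)\log(1/|x-y|)}\le e^{\kappa C_0}$, which produces the term $C_1 m_i(u)$. On the second set, $\frac{t^{\bar p}}{|x-y|^{N+s\bar p}}<|x-y|^{\kappa\bar p-N}$, and $\int_D\int_D|x-y|^{\kappa\bar p-N}\diff x\diff y<\infty$ because $\kappa\bar p>0$, producing $C_2|D|$.

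\emph{Conclusion and main obstacle.} Combining the three displays, $\int_{D_{x_i}}|u|^{q(x)}\diff x\le C\big(m_i(u)+|D_{x_i}|\big)^{N/(N-s\bar p_{x_i})}+C\,m_i(u)$; summing over $i$ and using $\sum_i m_i(u)\lesssim M_\Omega(u)=1$, $\sum_i|D_{x_i}|\lesssim|\Omega|$, and that the exponents $N/(N-s\bar p_{x_i})$ lie in a bounded subinterval of $[1,\infty)$, one obtains $\int_\Omega|u|^{q(x)}\diff x\le C$, which is \eqref{critical.embedding}. I expect the seminorm comparison of the third step to be the real obstacle: it is precisely the log-H\"older condition \eqref{LH} that makes the factor $(|x-y|^s/t)^{\delta(x,y)}$ uniformly bounded near the diagonal, and the threshold exponent $s+\kappa$ has to be tuned so that the complementary ``small increment'' region still leaves an integrable kernel $|x-y|^{\kappa\bar p-N}$. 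A secondary, more bookkeeping, difficulty --- present only when $\Omega=\mathbb{R}^N$ --- is to run the covering and the summation over infinitely many pieces with uniformly controlled constants, which is where the uniform continuity of $p$ and $q$, the hypothesis $p(x)<q(x)$, Theorem~\ref{Theo.subcritical.imb}(ii), and the finiteness of $|\{|u|>1\}|$ enter.
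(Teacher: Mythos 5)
For the bounded-domain case your argument is essentially the one in the paper: cover by small balls, freeze $p$ to its local infimum $\bar p$, compare the frozen Gagliardo seminorm with the variable one via the log-H\"older bound, and invoke Proposition~\ref{imb.frac.const}. The seminorm comparison is also the same mechanism: the paper writes the integrand as
$\bigl(\tfrac{|u(x)-u(y)|}{|x-y|^{2s}}\bigr)^{p(x,y)}|x-y|^{-(N-s\bar p)}\cdot|x-y|^{s(p(x,y)-\bar p)}$ and then uses $a^{p}\ge a^{\bar p}-1$, which amounts to exactly your split at the threshold $|u(x)-u(y)|\gtrless|x-y|^{s+\kappa}$ with $\kappa=s$ fixed, the log-H\"older bound controlling the extra factor near the diagonal and the complement producing an integrable kernel $|x-y|^{\kappa\bar p-N}$. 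Leaving $\kappa$ free is a cosmetic generalisation; the identity, the bound $e^{\kappa\delta\log(1/|x-y|)}\le e^{\kappa C_0}$, and the conclusion are as in \eqref{P.T.C.Decompose}--\eqref{aaaaaa}. So on bounded domains your proposal is correct and equivalent to the paper's Case 1.

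The gap is in the $\Omega=\mathbb{R}^N$ case. Your local estimate reads
$\int_{D_i}|u|^{q(x)}\,\diff x\le C\bigl(m_i(u)+|D_i|\bigr)^{N/(N-s\bar p_i)}+C\,m_i(u)$,
and the additive term $|D_i|$ is unavoidable in your route: it enters both when you dominate $\int_D|u|^{\bar p}$ by $\int_D|u|^{p(x)}+|D|$ and from the small-increment region $|u(x)-u(y)|<|x-y|^{s+\kappa}$. When the cover is infinite and of essentially uniform size, $\sum_i|D_i|=\infty$, so the summed right-hand side diverges even under the normalisation $M_\Omega(u)=1$ and $m_i(u)\le 1$; the exponent $N/(N-s\bar p_i)>1$ makes things worse, not better. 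The tools you list for this ``bookkeeping'' step do not obviously fix it: Theorem~\ref{Theo.subcritical.imb}(ii) needs $\inf(p_s^\ast-q)>0$ which $(\mathcal{Q}_1)$ does \emph{not} grant (indeed $q$ may touch $p_s^\ast$), and $|\{|u|>1\}|\le 1$ controls the $u$-dependent part but not the $|D_i|$ terms, which are produced at the level of the gradient kernel and the constant-to-variable exponent passage independently of where $u$ is large.

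The paper's Case 2 uses a genuinely different device precisely to kill this divergence: it normalises $v=u/\|u\|_{s,p}$ so that $M_{Q_i}(v)\le 1$ and $\|v\|_{s,p,Q_i}\le 1$ on every tile, and then performs the exponent-freezing comparison at the \emph{Luxemburg norm} level rather than the modular level. Working through the auxiliary measure $\diff\mu=|x-y|^{-(N-s p_i)}\diff x\diff y$ and Proposition~\ref{est.Deining}, one gets the multiplicative (hence additive-error-free) estimate $[v]_{s,p_i,Q_i}\le C[v]_{s,p,Q_i}$ with $C$ uniform in $i$ thanks to \eqref{aaaaaaa}, leading to $\|v\|_{L^{q(\cdot)}(Q_i)}\le C\|v\|_{s,p,Q_i}$. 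Then, since $\|v\|_{s,p,Q_i}\le 1$ and the cubes were chosen (using uniform continuity and $p<q$) so that $q_i^-\ge p_i^+$, one can raise to the exponent $q_i^-$ (or $q_i$), replace it by $p_i^+$ because the base is $\le 1$, and land on $\int_{Q_i}|v|^{q(x)}\diff x\le C\,M_{Q_i}(v)$, which \emph{does} sum to a constant times $M_{\mathbb{R}^N}(v)=1$. Without this change of level, your route produces estimates that cannot be summed over an infinite cover, so the $\mathbb{R}^N$ half of the theorem is not proved.
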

\begin{proof}
By the closed graph theorem, to prove \eqref{critical.embedding} it suffices to show that $W^{s,p(\cdot,\cdot)}(\Omega)  \subset
L^{q(\cdot)}(\Omega)$. Let $u\in W^{s,p(\cdot,\cdot)}(\Omega)\setminus \{0\}$ be arbitrary and fixed. We will show that $u\in L^{q(\cdot)}(\Omega)$, namely,
\begin{equation}\label{proof.theoo.critial.1}
\int_{\Omega}|u|^{q(x)}\,\diff x<\infty.
\end{equation}
 To this end, we first note that by \eqref{LH}, there exists constant $\e_0\in (0,1)$ such that
\begin{equation}\label{LH_0}
\underset{\underset{0<|x-y|<1/2}{(x,y)\in\Omega\times\Omega}}{\sup} \left|p(x,y)-p_{\Omega_{x,\epsilon_0}\times \Omega_{y,\epsilon_0}}^-\right|\log \frac{1}{|x-y|} <C.
\end{equation}
Here and in the remainder of the proof, $C$ denotes a positive constant independent of $u$ and may vary from line to line. We consider the following two cases.

\noindent{\bf Case 1:} $\Omega$ is a bounded Lipschitz domain.

We cover $\overline{\Omega}$ by $\{B_{\e_i}(x_i)\}_{i=1}^{m}$ with $x_i\in \Omega$ and $\e_i\in (0,\e_0)$ such that $\Omega_i:=B_{\e_i}(x_i)\cap \Omega$ being Lipschitz domains and \eqref{supq} being satisfied for all $i\in\{1,\cdots,m\}$. Fix $i\in\{1,\cdots,m\}$ and denote $p_i:=\inf_{(y,z)\in\Omega_i\times \Omega_i}p(y,z)$ and $q_i:=\sup_{x\in\Omega_i}q(x)$. By \eqref{LH_0} and the choice of $\e_i$, we have
\begin{equation*}
q_i\le \frac{Np_i}{N-sp_i}=:p_{s,i}^*.
\end{equation*}
From this and Proposition \ref{imb.frac.const}, we have
$$
\int_{\Omega_i}|u|^{q_i}\,\diff x\le C\Biggl[\int_{\Omega_i}|u|^{p_i}\,\diff x+\int_{\Omega_i}\int_{\Omega_i}\frac{|u(x)-u(y)|^{p_i}}{|x-y|^{N+sp_i}}\,\diff x\diff y\Biggr]^{\frac{q_i}{p_i}},
$$
and hence,
\begin{equation}\label{aaaa}
\int_{\Omega_i}|u|^{q(x)}\,\diff x-|\Omega_i|\le C\Biggl[|\Omega_i|+\int_{\Omega_i}|u|^{p(x)}\,\diff x+\int_{\Omega_i}\int_{\Omega_i}\frac{|u(x)-u(y)|^{p_i}}{|x-y|^{N+sp_i}}\,\diff x\diff y\Biggr]^{\frac{q_i}{p_i}}.
\end{equation}
On the other hand, we have
\begin{align}\label{P.T.C.Decompose}
&\int_{\Omega_i}\int_{\Omega_i}\frac{|u(x)-u(y)|^{p(x,y)}}{|x-y|^{N+sp(x,y)}}\,\diff x\diff y\notag\\
&\qquad\qquad=
\int_{\Omega_i}\int_{\Omega_i}\abs{\frac{|u(x)-u(y)|}{|x-y|^{2s}}}^{p(x,y)}\frac{1}{|x-y|^{N-sp_i}}\frac{1}{|x-y|^{-(p(x,y)-p_i)s}}\,\diff x\diff y.
\end{align}
Note that by \eqref{LH_0}, we have
\begin{equation}\label{P.T.C.LH.Estimate}
|x-y|^{-s(p(x,y)-p_i)}=e^{-s(p(x,y)-p_i) \log |x-y|}\le C,\ \  \forall x,y\in\Omega,\, x\ne y.
\end{equation}
Thus, \eqref{P.T.C.Decompose} yields
\begin{equation*}
\aligned
\int_{\Omega_i}\int_{\Omega_i}&\frac{|u(x)-u(y)|^{p(x,y)}}{|x-y|^{N+sp(x,y)}}\,\diff x\diff y\ge C\int_{\Omega_i}\int_{\Omega_i}\abs{\frac{|u(x)-u(y)|}{|x-y|^{2s}}}^{p(x,y)}\frac{1}{|x-y|^{N-sp_i}}\,\diff x\diff y\\
&\ge C\int_{\Omega_i}\int_{\Omega_i}\Biggl(\abs{\frac{|u(x)-u(y)|}{|x-y|^{2s}}}^{p_i}-1\Biggr)\frac{1}{|x-y|^{N-sp_i}}\,\diff x\diff y\\
\endaligned
\end{equation*}
Hence,
\begin{equation}\label{aaaaa}
\aligned
\int_{\Omega_i}\int_{\Omega_i}&\frac{|u(x)-u(y)|^{p(x,y)}}{|x-y|^{N+sp(x,y)}}\,\diff x\diff y\\
&\ge C\int_{\Omega_i}\int_{\Omega_i}\frac{|u(x)-u(y)|^{p_i}}{|x-y|^{N+sp_i}}\,\diff x\diff y-C\int_{\Omega_i}\int_{\Omega_i}\frac{1}{|x-y|^{N-sp_i}}\,\diff x\diff y.
\endaligned
\end{equation}
We have
\begin{equation}\label{aaaaaa}
\int_{\Omega_i}\int_{\Omega_i}\frac{1}{|x-y|^{N-sp_i}}\,\diff x\diff y\le\int_{\Omega}\,\diff y \int_{B_2}\frac{\diff z}{|z|^{N-sp_i}}=|\Omega| \frac{N|B_1|2^{sp_i}}{sp_i}.
\end{equation}
From \eqref{aaaa}, \eqref{aaaaa} and \eqref{aaaaaa}, we obtain
\begin{align*}
\int_{\Omega_i}|u|^{q(x)}\,\diff x&\le C\Biggl[1+\int_{\Omega_i}|u|^{p(x)}\,\diff x+
\int_{\Omega_i}\int_{\Omega_i}\frac{|u(x)-u(y)|^{p(x,y)}}{|x-y|^{N+sp(x,y)}}\,\diff x\diff y\Biggr]^{\frac{q_i}{p_i}}\\
&\le C\Biggl[1+\int_{\Omega}|u|^{p(x)}\,\diff x+
\int_{\Omega}\int_{\Omega}\frac{|u(x)-u(y)|^{p(x,y)}}{|x-y|^{N+sp(x,y)}}\,\diff x\diff y\Biggr]^{\frac{q^+}{p^-}}.
\end{align*}
Summing up for $i=1,\cdots,m$, we arrive at
$$
\int_{\Omega}|u|^{q(x)}\,\diff x\le C\Biggl[1+\int_{\Omega}|u|^{p(x)}\,\diff x+
\int_{\Omega}\int_{\Omega}\frac{|u(x)-u(y)|^{p(x,y)}}{|x-y|^{N+sp(x,y)}}\,\diff x\diff y\Biggr]^{\frac{q^+}{p^-}}<\infty,
$$
and so \eqref{proof.theoo.critial.1} is claimed.

\noindent{\bf Case 2:} $\Omega=\Bbb R^N$.

Decompose $\Bbb R^N$ by cubes $\{Q_i\}_{i\in\mathbb{N}}$ with sides of length $\e\in(0,1)$ and parallel to coordinates axes. By \eqref{supq} and the uniform continuity of $q$ we can choose $\e>0$ sufficiently small such that
\begin{equation}\label{P.T.CI.locEx}
p_i\le p_i^+\le q_i^-\le q_i\le \frac{Np_i}{N-sp_i}=:p_{s,i}^*, \ \ \forall i\in \Bbb N,
\end{equation}
where
$$p_i:=\inf_{(y,z)\in Q_i\times Q_i}p(y,z),\ p_i^+:=\sup_{(y,z)\in Q_i\times Q_i}p(y,z),\ q_i^-:=\inf_{x\in Q_i}q(x),\ \text{and}\  q_i:=\sup_{x\in Q_i}q(x).$$
Set $v=\frac{u}{\|u\|_{s,p}}$. Thus, $\|v\|_{s,p}=1$ and hence, $M_{\mathbb{R}^N}(v)=1$ in view of Proposition \ref{norm-modular2}. This yields $M_{Q_i}(v)\leq 1$ for all $i\in\N$ and hence,
\begin{equation}\label{P.T.CI.E0}
\|v\|_{s,p,Q_i}\le 1, \ \ \forall i\in \Bbb N.
\end{equation}
We claim that
\begin{equation}\label{P.T.CI.E1}
\norm{v}_{L^{q(\cdot)}(Q_i)}\le C\|v\|_{s,p,Q_i}, \ \ \forall i\in \Bbb N.
\end{equation}
Here and in the remainder of the proof, $C$ is a positive constant independent of $v$ and $i$. In order to prove \eqref{P.T.CI.E1}, we first prove that
\begin{equation}\label{33333}
\norm{v}_{s,p_i,Q_i}\le C\norm{v}_{s,p,Q_i},\ \ \forall i\in \Bbb N.
\end{equation}
Indeed, let $i\in\mathbb{N}$ and consider the measure $\mu$ on $\Bbb R^N\times \Bbb R^N$ such that
$$
\diff\mu(x,y)=\frac{\diff x\diff y}{|x-y|^{N-sp_i}}.
$$
As in \eqref{aaaaaa} we have
\begin{equation}\label{aaaaaaa}
\mu(Q_i\times Q_i)\leq |Q_i| \frac{N|B_1|2^{sp_i}}{sp_i}< \frac{N|B_1|2^{sp^+}}{sp^-},\ \ \forall i\in \Bbb N.
\end{equation}
Set $\lambda:=[v]_{s,p,Q_i}$ and $F(x,y):=\frac{|v(x)-v(y)}{|x-y|^{2s}}$. Invoking Proposition~\ref{norm-modular2} and \eqref{P.T.C.LH.Estimate} we estimate
$$
\aligned
1&=\int_{Q_i}\int_{Q_i}\frac{|v(x)-v(y)|^{p(x,y)}}{\lambda^{p(x,y)}|x-y|^{N+sp(x,y)}}\,\diff x\diff y\\
&=\int_{Q_i}\int_{Q_i}\abs{\frac{|v(x)-v(y)|}{\lambda|x-y|^{2s}}}^{p(x,y)}\frac{1}{|x-y|^{-s(p(x,y)-p_i)}}\,\frac{\diff x\diff y}{|x-y|^{N-sp_i}}\\
&\ge (C+1)^{-1}\int_{Q_i\times Q_i}\abs{\frac{F(x,y)}{\lambda}}^{p(x,y)}\diff\mu(x,y)\\
&\ge \int_{Q_i\times Q_i}(C+1)^{-\frac{p(x,y)}{p_i}}\abs{\frac{F(x,y)}{\lambda}}^{p(x,y)}\diff\mu(x,y)=\int_{Q_i\times Q_i}\abs{\frac{F(x,y)}{(C+1)^{\frac{1}{p_i}}\lambda}}^{p(x,y)}\diff\mu(x,y).
\endaligned
$$
Thus,
\begin{equation*}
\norm{F}_{L_\mu^{p(\cdot,\cdot)}(Q_i\times Q_i)}\leq (C+1)^{\frac{1}{p_i}}\lambda=(C+1)^{\frac{1}{p_i}}[v]_{s,p,Q_i}.
\end{equation*}
Meanwhile, invoking Proposition~\ref{est.Deining} we have
$$ \norm{F}_{L_\mu^{p_i}(Q_i\times Q_i)}\leq 2(1+\mu(Q_i\times Q_i))\norm{F}_{L_\mu^{p(\cdot,\cdot)}(Q_i\times Q_i)}.$$
Combining the last two inequalities and \eqref{aaaaaaa} we obtain
\begin{equation}\label{proof.theoo.critial.2}
\norm{F}_{L_\mu^{p_i}(Q_i\times Q_i)}\leq C[v]_{s,p,Q_i}.
\end{equation}
Noting
$$
\aligned
\norm{F}_{L_\mu^{p_i}(Q_i\times Q_i)}&=\Biggl(\int_{Q_i}\int_{Q_i}\abs{\frac{|v(x)-v(y)|}{|x-y|^{2s}}}^{p_i}\frac{\diff x\diff y}{|x-y|^{N-sp_i}}\Biggr)^{\frac{1}{p_i}}\\
&=\Biggl(\int_{Q_i}\int_{Q_i}\frac{|v(x)-v(y)|^{p_i}}{|x-y|^{N+sp_i}}\,\diff x\diff y\Biggr)^{\frac{1}{p_i}}\\
&=[v]_{s,p_i,Q_i},
\endaligned
$$
we deduce from \eqref{proof.theoo.critial.2} that
\begin{equation}\label{proof.theoo.critial.3}
[v]_{s,p_i,Q_i}\leq C[v]_{s,p,Q_i}.
\end{equation}
Combining \eqref{proof.theoo.critial.3} with the following estimate :
$$
\norm{v}_{L^{p_i}(Q_i)}\le 2(1+|Q_i|)\norm{v}_{L^{p(\cdot)}(Q_i)}\leq 4\norm{v}_{L^{p(\cdot)}(Q_i)}
$$
(see Proposition~\ref{est.Deining}) and the relation \eqref{equivalent.norms}, we obtain \eqref{33333}.

As in \cite[Proof of Theorem 3.5]{HK}, we can obtain an extension $\widetilde{v}\in W^{s,p_i}(\mathbb{R}^N)$ with compact support in $\mathbb{R}^N$ such that $\widetilde{v}=v$ on $Q_i$, and
\begin{equation*}\label{P.T.CI.2}
\|\widetilde{v}\|_{L^{p_{s,i}^\ast}(\mathbb{R}^N)}\leq C\|v\|_{s,p_i,Q_i}.
\end{equation*}
This and \eqref{P.T.CI.locEx} yield
\begin{equation}\label{P.T.CI.3}
\norm{{v}}_{L^{q_i}(Q_i)}\le C\norm{{v}}_{s,p_i,Q_i}.
\end{equation}
Note that by Proposition~\ref{est.Deining} again,
\begin{equation}\label{22222}
\norm{v}_{L^{q(\cdot)}(Q_i)}\le 2(1+|Q_i|)\norm{v}_{L^{q_i}(Q_i)}\le 4\norm{v}_{L^{q_i}(Q_i)}.
\end{equation}
From \eqref{33333}, \eqref{P.T.CI.3}, and \eqref{22222} we obtain \eqref{P.T.CI.E1}. Now, for each $i\in\mathbb{N},$ if  $\norm{v}_{L^{q(\cdot)}(Q_i)}\ge 1$, then by \eqref{P.T.CI.E0}, \eqref{P.T.CI.E1} and Proposition~\ref{norm-modular2} we have
$$
\aligned
\int_{Q_i}|v|^{q(x)}\,\diff x&\le \norm{v}_{L^{q(\cdot)}(Q_i)}^{q_i}\\
&\le C\|v\|_{s,p,Q_i}^{q_i}\\
&\le C\Biggl(\int_{Q_i}|v|^{p(x)}\,\diff x+\int_{Q_i}\int_{Q_i}\frac{|v(x)-v(y)|^{p(x,y)}}{|x-y|^{N+sp(x,y)}}\,\diff x\diff y \Biggr)^{\frac{q_i}{p_i^+}}\\
&\le C\Biggl(\int_{Q_i}|v|^{p(x)}\,\diff x+\int_{Q_i}\int_{Q_i}\frac{|v(x)-v(y)|^{p(x,y)}}{|x-y|^{N+sp(x,y)}}\,\diff x\diff y \Biggr).
\endaligned
$$
Similarly, if $\norm{v}_{L^{q(\cdot)}(Q_i)}\le 1$, then
$$
\aligned
\int_{Q_i}|v|^{q(x)}\,\diff x&\le \norm{v}_{L^{q(\cdot)}(Q_i)}^{q_i^-}\\
&\le C\|v\|_{s,p,Q_i}^{q_i^-}\\
&\le C\Biggl(\int_{Q_i}|v|^{p(x)}\,\diff x+\int_{Q_i}\int_{Q_i}\frac{|v(x)-v(y)|^{p(x,y)}}{|x-y|^{N+sp(x,y)}}\,\diff x\diff y \Biggr)^{\frac{q_i^-}{p_i^+}}\\
&\le C\Biggl(\int_{Q_i}|v|^{p(x)}\,\diff x+\int_{Q_i}\int_{Q_i}\frac{|v(x)-v(y)|^{p(x,y)}}{|x-y|^{N+sp(x,y)}}\,\diff x\diff y \Biggr).
\endaligned
$$
So in any case,
$$
\int_{Q_i}|v|^{q(x)}\,\diff x\le C\Biggl(\int_{Q_i}|v|^{p(x)}\,\diff x+\int_{Q_i}\int_{Q_i}\frac{|v(x)-v(y)|^{p(x,y)}}{|x-y|^{N+sp(x,y)}}\,\diff x\diff y \Biggr).
$$
Summing up for $i\in \Bbb N$, we obtain
$$
\int_{\Bbb R^N}|v|^{q(x)}\,\diff x\le C\Biggl(\int_{\Bbb R^N}|v|^{p(x)}\,\diff x+\int_{\Bbb R^N} \int_{\Bbb R^N}\frac{|v(x)-v(y)|^{p(x,y)}}{|x-y|^{N+sp(x,y)}}\,\diff x\diff y \Biggr),
$$
which implies \eqref{proof.theoo.critial.1} with $\Omega=\mathbb{R}^N.$ The proof is complete.
\end{proof}
We conclude this section with a compact imbedding from $W^{s,p(\cdot,\cdot)}(\mathbb{R}^N)$ into the weighted Lebesgue spaces with variable exponents.
\begin{theorem}\label{Theo.compact.imbedding}
	Assume that $(\mathcal{P}_1)$, $(\mathcal{Q}_1),$ and the log-H\"older continuity condition \eqref{LH} hold. Let $w\in L_+^{\frac{q(\cdot)}{q(\cdot)-r(\cdot)}}(\mathbb{R}^N)$ for some $r \in C_+(\mathbb{R}^N)$ such that
	$\underset{x\in\mathbb{R}^N}{\inf}[q(x)-r(x)]>0$. Then, it holds that
	\begin{equation*}
	W^{s,p(\cdot,\cdot)}(\R^N) \hookrightarrow \hookrightarrow L^{r(\cdot)}(w,\mathbb{R}^N).
	\end{equation*}
\end{theorem}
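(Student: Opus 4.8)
The plan is to establish the compact imbedding via sequences: it suffices to show that every bounded sequence $(u_n)$ in $W^{s,p(\cdot,\cdot)}(\R^N)$ admits a subsequence converging in $L^{r(\cdot)}(w,\R^N)$. By reflexivity, after passing to a subsequence we have $u_n\rightharpoonup u$ in $W^{s,p(\cdot,\cdot)}(\R^N)$; moreover, since $r(x)<q(x)\le p_s^\ast(x)$ for every $x\in\R^N$ (the strict inequality because $\inf_{\R^N}(q-r)>0$), Theorem~\ref{Theo.subcritical.imb}(iii) together with a diagonal argument over the balls $B_k$, $k\in\N$, yields a further subsequence with $u_n\to u$ in $L^{r(\cdot)}(B_R)$ for every $R>0$. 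Replacing $u_n$ by $u_n-u$ and using linearity, we are reduced to the case $u_n\rightharpoonup 0$ in $W^{s,p(\cdot,\cdot)}(\R^N)$ and $u_n\to 0$ in $L^{r(\cdot)}(B_R)$ for all $R>0$, and it remains to prove $u_n\to 0$ in $L^{r(\cdot)}(w,\R^N)$. By the norm--modular relation of Proposition~\ref{norm-modular} (applied with $\diff\mu=w\,\diff x$) this is in turn equivalent to
\[
\int_{\R^N} w(x)\,|u_n(x)|^{r(x)}\,\diff x\longrightarrow 0 .
\]

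Two a priori facts will drive the estimate. First, since $(\mathcal{P}_1)$, $(\mathcal{Q}_1)$ and \eqref{LH} hold, Theorem~\ref{Theo.critical.imb} gives $W^{s,p(\cdot,\cdot)}(\R^N)\hookrightarrow L^{q(\cdot)}(\R^N)$; hence $(u_n)$ is bounded in $L^{q(\cdot)}(\R^N)$, so the modulars $\int_{\R^N}|u_n|^{q(x)}\,\diff x$ are bounded. As $\int_{\R^N}|u_n|^{q(x)}\,\diff x$ is precisely the $L^{q(\cdot)/r(\cdot)}(\R^N)$-modular of $|u_n|^{r(\cdot)}$, and $q(\cdot)/r(\cdot)\in C_+(\R^N)$ (because $r\in C_+(\R^N)$, $q\in C_+(\R^N)$ and $\inf_{\R^N}(q-r)>0$), Proposition~\ref{norm-modular} shows that $\big(|u_n|^{r(\cdot)}\big)$ is bounded in $L^{q(\cdot)/r(\cdot)}(\R^N)$. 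Second, from $u_n\to 0$ in $L^{r(\cdot)}(B_R)$ and Proposition~\ref{norm-modular} (on $B_R$, Lebesgue measure) we get $\int_{B_R}|u_n|^{r(x)}\,\diff x\to 0$ for each fixed $R>0$.

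Now fix $\e>0$ and run a cut-off argument. Since $w\in L^{q(\cdot)/(q(\cdot)-r(\cdot))}(\R^N)$ and $q(\cdot)/(q(\cdot)-r(\cdot))\in C_+(\R^N)$ (again using $\inf_{\R^N}(q-r)>0$), the functions $w\,\mathbf{1}_{A_{R,M}^c}$, with $A_{R,M}:=B_R\cap\{w\le M\}$, converge to $0$ a.e.\ as $R,M\to\infty$ and are dominated by $w$; hence, by dominated convergence their modulars, and so (by Proposition~\ref{norm-modular}) their norms in $L^{q(\cdot)/(q(\cdot)-r(\cdot))}(\R^N)$, tend to $0$. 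Choose $R,M$ large enough that, in view of the variable-exponent Hölder inequality (Proposition~\ref{Holder ineq}, with the conjugate pair $q(\cdot)/r(\cdot)$, $q(\cdot)/(q(\cdot)-r(\cdot))$) and the uniform bound on $\big(|u_n|^{r(\cdot)}\big)$ in $L^{q(\cdot)/r(\cdot)}(\R^N)$ from the previous paragraph,
\[
\int_{A_{R,M}^c}w\,|u_n|^{r(x)}\,\diff x\le 2\,\norm{\,|u_n|^{r(\cdot)}}_{L^{q(\cdot)/r(\cdot)}(\R^N)}\,\norm{w\,\mathbf{1}_{A_{R,M}^c}}_{L^{q(\cdot)/(q(\cdot)-r(\cdot))}(\R^N)}<\frac{\e}{2}\quad\text{for all }n .
\]
On $A_{R,M}$ we use instead the crude bound $\int_{A_{R,M}}w\,|u_n|^{r(x)}\,\diff x\le M\int_{B_R}|u_n|^{r(x)}\,\diff x$, which is $<\e/2$ for all $n$ large by the second a priori fact. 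Adding the two estimates gives $\int_{\R^N}w\,|u_n|^{r(x)}\,\diff x<\e$ for all large $n$, which is what we wanted.

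I expect the only genuinely delicate point to be the uniform-in-$n$ control of the ``tail-plus-large-weight'' part $\int_{A_{R,M}^c}w|u_n|^{r(x)}\,\diff x$, where compactness at infinity could be lost; this is recovered exactly by pairing the integrability hypothesis on $w$ against the uniform $L^{q(\cdot)}(\R^N)$-bound on $(u_n)$ coming from the critical imbedding of Theorem~\ref{Theo.critical.imb} (the subcritical imbedding alone would not be enough, since $q$ is allowed to touch $p_s^\ast$). The remaining steps --- the local compactness, the variable-exponent Hölder inequality, the passages between modular and norm convergence, and the verification that $q(\cdot)/r(\cdot)$ and $q(\cdot)/(q(\cdot)-r(\cdot))$ lie in $C_+(\R^N)$ --- are routine.
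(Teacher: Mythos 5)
Your proof is correct, and it follows the standard cut-off argument for weighted compact imbeddings — the same scheme as the result (\cite[Lemma~4.1]{HKS}) that the paper cites and defers to, adapted to the fractional setting by invoking the new critical imbedding Theorem~\ref{Theo.critical.imb} in place of its $W^{1,p(\cdot)}$ counterpart. All the key steps check out: the pointwise bound $r(x)<q(x)\le p_s^*(x)$ licenses the local compactness (Theorem~\ref{Theo.subcritical.imb}(iii)); the conjugate pair $q/r$, $q/(q-r)$ lies in $C_+(\R^N)$ precisely because $\inf(q-r)>0$ and $r,q\in C_+$; the uniform $L^{q(\cdot)}$-bound from the critical imbedding controls the tail $\int_{A_{R,M}^c}w|u_n|^{r(x)}\,\diff x$ via H\"older and dominated convergence applied to $\|w\mathbf{1}_{A_{R,M}^c}\|_{L^{q/(q-r)}}$; and the passages between modular and norm convergence are used correctly. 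Your closing remark that the subcritical imbedding would not suffice (since $q$ may touch $p_s^*$) correctly identifies why the new Theorem~\ref{Theo.critical.imb} is essential to the argument.
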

A proof of Theorem~\ref{Theo.compact.imbedding} can be obtained in a similar fashion to that of \cite[Lemma 4.1]{HKS} and we omit it.

\section{The concentration-compactness principles for $W^{s,p(\cdot,\cdot)}(\mathbb{R}^N)$}\label{CCPs}

In this section we establish two Lions type concentration-compactness principles for the spaces $W^{s,p(\cdot,\cdot)}(\mathbb{R}^N)$.

\subsection{Statements of the concentration-compactness principles} Let $\mathcal{M}(\mathbb{R}^N)$ be the space of all signed finite
Radon measures on $\mathbb{R}^N$ endowed with the total variation norm. Note that we may identify $\mathcal{M}(\mathbb{R}^N)$ with the dual of $C_0(\mathbb{R}^N)$, the completion of all continuous functions $u : \mathbb{R}^N \to\mathbb{R}$ whose support is compact relative to the supremum norm $\|\cdot\|_\infty$ (see, e.g., \cite[Section 1.3.3]{Fonseca}).

In the rest of this paper, we always assume that the variable exponents $p$ and $q$ satisfy the following assumptions.
\begin{itemize}
	\item [($\mathcal{P}_2$)] $p : \mathbb{R}^N\times \mathbb{R}^N \to\mathbb{R}$ is uniformly continuous and symmetric such that
	\begin{equation*}
	1 < p^-:=\inf_{(x,y)\in \R^N\times
		\R^N}p(x,y) \le \sup_{(x,y)\in \R^N\times
		\R^N}p(x,y)=:\overline{p}< \frac{N}{s};
	\end{equation*}
	there exists $\e_0\in (0,\frac{1}{2})$ such that $p(x,y)=\overline{p}$ for all $x,y\in\mathbb{R}^N$ satisfying $|x-y|<\epsilon_0$ and $\sup_{y\in \R^N}p(x,y)= \overline{p}$ for all $x\in\mathbb{R}^N;$ and $|\{x\in\mathbb{R}^N:\, p_\ast(x)\ne \overline{p}\}|<\infty,$
	where $p_*(x):=\inf_{y\in \R^N}p(x,y)$ for $x\in \R^N$.
	\end{itemize}
\begin{itemize}
	\item [($\mathcal{Q}_2$)] $q : \mathbb{R}^N \to\mathbb{R}$ is uniformly continuous such that $	p_\ast(x)\leq q(x)\leq \overline{p}_s^\ast$ for all $x\in\mathbb{R}^N$
	and $\mathcal{C}:=\{x\in \R^N:q(x)=\overline{p}_s^\ast\}\neq \phi$.
	\end{itemize}
It is clear that if $p$ satisfies ($\mathcal{P}_2$), then $p(x,x)=\overline{p}$ for all $x\in\R^N$ and $p$ satisfies ($\mathcal{P}_1$) and \eqref{LH}. Hence, by Theorem~\ref{Theo.critical.imb}, we have
\begin{equation}\label{IV.critical.imb.1}
W^{s,p(\cdot,\cdot)}(\R^N) \hookrightarrow L^{\overline{p}_s^\ast}(\mathbb{R}^N).
\end{equation}
On the other hand, by ($\mathcal{P}_2$) we have that for any $u\in L^{\overline{p}}(\R^N)$,
\begin{align*}
\int_{\R^N}|u(x)|^{p_\ast(x)}
\, \diff x&=\int_{\{p_\ast(x)=\overline{p}\}}|u(x)|^{p_\ast(x)}\,\diff x+ \int_{\{p_\ast(x)\ne\overline{p}\}}|u(x)|^{p_\ast(x)}\,\diff x\notag\\
&\leq \int_{\{p_\ast(x)=\overline{p}\}}|u(x)|^{\overline{p}}\,\diff x+\int_{\{p_\ast(x)\ne\overline{p}\}}\left[1+|u(x)|^{\overline{p}}\right]\,\diff x \notag\\
&=|\{x\in\mathbb{R}^N:\, p_\ast(x)\ne \overline{p}\}|+ \int_{\mathbb{R}^N}|u(x)|^{\overline{p}}\,\diff x<\infty.
\end{align*}
Hence, $L^{\overline{p}}(\R^N)\subset L^{p_\ast(\cdot)}(\R^N)$. From this and \eqref{IV.critical.imb.1} we obtain
 \begin{equation}\label{IV.critical.imb.2}
 W^{s,p(\cdot,\cdot)}(\R^N) \hookrightarrow L^{t(\cdot)}(\mathbb{R}^N)
 \end{equation}
for any $t\in C(\R^N)$ satisfying $p_\ast(x)\leq t(x)\leq \overline{p}_s^\ast$ for all $x\in\R^N.$ In particular, ($\mathcal{Q}_2$) yields
\begin{equation}\label{S_q}
S_q:=\inf_{u\in W^{s,p(\cdot,\cdot)}(\R^N)\setminus \{0\}}\frac{\norm{u}_{s,p}}{\norm{u}_{L^{q(\cdot)}(\R^N)}}>0.
\end{equation}
Our main results in this sections are the following CCPs for $W^{s,p(\cdot,\cdot)}(\R^N)$.

\begin{theorem}\label{CCP}
Assume that {\rm($\mathcal{P}_2$)} and {\rm($\mathcal{Q}_2$)} hold.  Let $\{u_n\}$ be a bounded sequence in $W^{s,p(\cdot,\cdot)}(\R^N)$ such that
\begin{equation}\label{T.CCP1.weak.conv}
u_n \ \rightharpoonup \ u \ \ \text{in} \ \ W^{s,p(\cdot,\cdot)}(\R^N),
\end{equation}
\begin{equation}\label{T.CCP1.weak-*.conv.mu}
 |u_n|^{\overline{p}}+\int_{\R^N}\frac{|u_n(x)-u_n(y)|^{p(x,y)}}{|x-y|^{N+sp(x,y)}}\,\diff y \ \overset{\ast }{\rightharpoonup } \ \mu \ \ \text{in} \ \ \mathcal{M}(\R^N),
\end{equation}
\begin{equation}\label{T.CCP1.weak-*.conv.nu}
|u_n|^{q(x)} \ \overset{\ast }{\rightharpoonup } \ \nu \ \ \text{in} \ \ \mathcal{M}(\R^N).
\end{equation}
Then, there exist sets $\{\mu_i\}_{i\in I}\subset (0,\infty)$,\ $\{\nu_i\}_{i\in I}\subset (0,\infty)$
and $\{x_i\}_{i\in I}\subset \mathcal C$, where $I$ is an at most countable index set,  such that
\begin{equation}\label{T.CCP1.form-mu}
\mu\ge |u|^{\overline{p}}+\int_{\R^N}\frac{|u(x)-u(y)|^{p(x,y)}}{|x-y|^{N+sp(x,y)}}\,\diff y +\sum_{i\in I}\mu_i\delta_{x_i},
\end{equation}
\begin{equation}\label{T.CCP1.form-nu}
\nu=|u|^{q(x)}+\sum_{i\in I}\nu_i\delta_{x_i},
\end{equation}
\begin{equation}\label{T.CCP1.mu-nu}
S_q\, \nu_i^{\frac{1}{\overline{p}_s^\ast}}\le \mu_i^{\frac{1}{\overline{p}}},\ \ \forall i\in I.
\end{equation}
\end{theorem}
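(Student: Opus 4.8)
The plan is to follow the classical Lions concentration-compactness scheme, adapted to the variable-exponent fractional setting, working first with the case $u=0$ and then reducing the general case to it by the Brezis--Lieb type translation. First I would set $v_n:=u_n-u$, so that $v_n\rightharpoonup 0$ in $W^{s,p(\cdot,\cdot)}(\R^N)$, and record that $\{v_n\}$ is still bounded. The heart of the argument is the following ``reverse H\"older on small balls'' inequality: for every $\phi\in C_0^\infty(\R^N)$,
\begin{equation*}
\|\phi\, v_n\|_{L^{q(\cdot)}(\R^N)} \le C\, \|\phi\|_{?}\, \|v_n\|_{s,p} + o(1),
\end{equation*}
more precisely, the version yielding, after passing to the limit and using \eqref{S_q}, that the limit measures satisfy a reverse-doubling estimate forcing the singular part of $\nu$ to be supported on an at most countable set. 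Concretely I would test the imbedding \eqref{IV.critical.imb.1}--\eqref{S_q} against $\phi u_n$, use the auxiliary result in the Appendix (the fractional analogue of the Gagliardo-seminorm product rule, i.e.\ $[\phi u]_{s,p}\lesssim \|\phi\|_\infty [u]_{s,p}+(\text{lower order in }u)$ with the lower-order term controlled by an $L^{t(\cdot)}$-norm with $t<\overline p_s^\ast$, hence compact by Theorem~\ref{Theo.subcritical.imb}(iii)), and let $n\to\infty$ to obtain
\begin{equation*}
\left(\int_{\R^N}|\phi|^{q(x)}\,\diff\nu\right)^{1/\overline p_s^\ast_{?}} \le S_q^{-1}\left(\int_{\R^N}|\phi|^{\overline p}\,\diff\mu\right)^{1/\overline p},
\end{equation*}
valid for all such $\phi$ (here one must be slightly careful: because exponents vary, one first localizes to a small ball where $q$ is nearly constant and $p\equiv\overline p$, using that near a point $x_0$ with $q(x_0)=\overline p_s^\ast$ we have $q$ close to $\overline p_s^\ast$; the hypothesis $\mathcal C\neq\phi$ and continuity of $q$ is what guarantees this localization is possible exactly at the concentration points).

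Next I would invoke the standard measure-theoretic lemma (Lions' first lemma): from the inequality $\nu(E)^{1/\overline p_s^\ast}\le S_q^{-1}\mu(E)^{1/\overline p}$ holding for all Borel $E$ (obtained from the displayed inequality by a density/approximation argument, replacing $|\phi|$ by $\mathbf 1_E$), and the fact that $\overline p<\overline p_s^\ast$, one deduces that there is an at most countable set $\{x_i\}_{i\in I}$ and positive reals $\{\nu_i\}$ with $\nu=\nu_0+\sum_i\nu_i\delta_{x_i}$ where $\nu_0$ is non-atomic and, in fact, absolutely continuous enough that $\nu_0$ has no singular part relative to $\mu$; combined with the weak-$\ast$ lower semicontinuity giving $\nu_0\ge|u|^{q(x)}$ on the one hand and $\nu\le(\text{something controlled by }\mu)$ on the other, one upgrades this to $\nu=|u|^{q(x)}+\sum_i\nu_i\delta_{x_i}$, which is \eqref{T.CCP1.form-nu}. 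The reverse inequality needed to pin down the atoms of $\mu$ --- namely $\mu\ge|u|^{\overline p}+\int\frac{|u(x)-u(y)|^{p(x,y)}}{|x-y|^{N+sp(x,y)}}\diff y+\sum_i\mu_i\delta_{x_i}$ with $\mu_i:=\mu(\{x_i\})>0$ --- follows by testing \eqref{T.CCP1.weak-*.conv.mu} against cutoffs concentrating at $x_i$ and using weak lower semicontinuity of the Gagliardo energy along $v_n\rightharpoonup0$ (Brezis--Lieb / Fatou for the nonlocal term, which is where the Appendix lemma is used a second time to split $[u_n]^{p(\cdot,\cdot)}=[u]^{p(\cdot,\cdot)}+[v_n]^{p(\cdot,\cdot)}+o(1)$). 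Finally \eqref{T.CCP1.mu-nu} is exactly the localized reverse-H\"older inequality evaluated at the atom $x_i$: take $\phi$ supported in $B_\rho(x_i)$, $\phi\equiv1$ near $x_i$, send $\rho\to0$, and use $q(x_i)=\overline p_s^\ast$ so no exponent-mismatch loss occurs; that $x_i\in\mathcal C$ is forced because at any point where $q(x_i)<\overline p_s^\ast$ the subcritical compact imbedding Theorem~\ref{Theo.subcritical.imb}(iii) kills the atom.

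The main obstacle, and the place where this differs genuinely from the constant-exponent proof, is establishing the localized reverse-H\"older inequality with the \emph{correct} exponents: one cannot simply apply the global imbedding \eqref{IV.critical.imb.1} to $\phi u_n$ because the product $\phi u_n$ need not have its Gagliardo seminorm controlled by $\|\phi\|_\infty[u_n]_{s,p}$ alone --- there is an extra term of the form $\int\int\frac{|u_n(y)|^{p(x,y)}|\phi(x)-\phi(y)|^{p(x,y)}}{|x-y|^{N+sp(x,y)}}$, and showing this is $o(1)$ (or at least does not contribute an atom) as $n\to\infty$ requires the compactness in $L_{\loc}^{t(\cdot)}$ from Theorem~\ref{Theo.subcritical.imb}(iii) together with a careful splitting of the domain of integration into $|x-y|<\delta$ and $|x-y|\ge\delta$; this is precisely the content of the auxiliary Appendix result, so in the body of the proof I would quote it as a black box and spend the effort on the measure-theoretic bookkeeping and the exponent-localization near $\mathcal C$.
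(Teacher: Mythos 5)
Your overall strategy is the same as the paper's: set $v_n=u_n-u$, test the critical Sobolev inequality \eqref{S_q} against cut-off products, control the cross-term involving $|\phi(x)-\phi(y)|^{p(x,y)}$ by local compactness, pass to the limit to get a reverse-H\"older relation between the limit measures, invoke a measure-theoretic atomicity lemma to obtain \eqref{T.CCP1.form-nu}, and finally localize at the atoms with shrinking cut-offs $\psi_\rho$ (and use continuity of $q$) to get $\{x_i\}\subset\mathcal C$ and the quantitative bound \eqref{T.CCP1.mu-nu}. Two corrections. First, you have swapped the roles of the two auxiliary ingredients: the Appendix result (Proposition~\ref{Reserverd.Holder}) is the measure-theoretic atomicity lemma, and it is stated in terms of Luxemburg norms with a $\max$ over two exponents $p_*(\cdot)$ and $\overline p$ --- it is \emph{not} the clean Borel-set inequality $\nu(E)^{1/\overline p_s^\ast}\le S_q^{-1}\mu(E)^{1/\overline p}$, which cannot simply be obtained by ``replacing $|\phi|$ by $\mathbf 1_E$'' when $p_*$ is nonconstant. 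The cross-term estimate you need, namely
$\limsup_{\rho\to 0^+}\limsup_{n\to\infty}\int\!\!\int |u_n(x)|^{p(x,y)}\frac{|\psi_\rho(x)-\psi_\rho(y)|^{p(x,y)}}{|x-y|^{N+sp(x,y)}}\,\diff y\diff x=0$,
is \emph{not} in the Appendix; it is Lemma~\ref{aux.le.1} (with companion Lemma~\ref{aux.le.2} at infinity), proved directly in Section~\ref{CCPs} by splitting the integration domain into annuli around the concentration point, using the compact imbedding of Theorem~\ref{Theo.subcritical.imb}(iii), and exploiting the structural hypothesis in $(\mathcal{P}_2)$ that $p\equiv\overline p$ near the diagonal. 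Citing Proposition~\ref{Reserverd.Holder} ``as a black box'' for the cross-term would not work. Second, for \eqref{T.CCP1.form-mu} you do not need a Brezis--Lieb splitting of the Gagliardo modular (which is not established here and would be delicate for variable $p(x,y)$); the paper's simpler route is convexity and weak lower semicontinuity of $u\mapsto\int_{\R^N}\phi(x)\bigl[|u(x)|^{\overline p}+\int_{\R^N}\frac{|u(x)-u(y)|^{p(x,y)}}{|x-y|^{N+sp(x,y)}}\,\diff y\bigr]\diff x$ against \eqref{T.CCP1.weak-*.conv.mu}, combined with the already-established fact that each $x_i$ is an atom of $\mu$.
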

For possible loss of mass at infinity, we have the following.


\begin{theorem}\label{CCP2} 	Assume that {\rm($\mathcal{P}_2$)} and {\rm($\mathcal{Q}_2$)} hold. Let $\{u_n\}$ be a sequence in $W^{s,p(\cdot,\cdot)}(\R^N)$ as in Theorem \ref{CCP}. Set
	\begin{equation}\label{T.CCP2.nu_infty.def}
	\nu_{\infty}:=\lim_{R\to\infty}\limsup_{n\to\infty}\int_{B_R^c}|u_n|^{q(x)}\, \diff x,
	\end{equation}
	\begin{equation}\label{T.CCP2.mu_infty.def}
	\mu_{\infty}:=\lim_{R\to\infty}\limsup_{n\to\infty}\int_{B_R^c}\left[|u_n|^{\overline{p}}+\int_{\R^N}\frac{|u_n(x)-u_n(y)|^{p(x,y)}}{|x-y|^{N+sp(x,y)}}\,\diff y\right]\, \diff x.
	\end{equation}
	Then
	\begin{equation}\label{T.CCP2.nu_infty}
	\limsup_{n\to\infty}\int_{\R^N}|u_n|^{q(x)}\, \diff x=\nu(\R^N)+\nu_{\infty},
	\end{equation}
	\begin{equation}\label{T.CCP2.mu_infty}
	\limsup_{n\to\infty}\int_{\R^N}\left[|u_n|^{\overline{p}}+\int_{\R^N}\frac{|u_n(x)-u_n(y)|^{p(x,y)}}{|x-y|^{N+sp(x,y)}}\,\diff y\right]\, \diff x=\mu(\R^N)+\mu_{\infty}.
	\end{equation}
	Assume in addition that
	\begin{itemize}
		\item [{\rm($\mathcal{E}_\infty$)}] 	There exist $\underset{|x|,|y|\to\infty}{\lim}\, p(x,y)=\overline{p}$ and $\underset{|x|\to\infty}{\lim}\,q(x)=q_{\infty}$ for $\overline{p}$ given by ($\mathcal{P}_2$) and
		some $q_\infty\in (1,\infty)$.
	\end{itemize}
Then
	\begin{equation}\label{T.CCP2.mu-nu_infty}
	S_q\nu_{\infty}^{\frac{1}{q_{\infty}}}\le \mu_{\infty}^{\frac{1}{\overline{p}}}.
	\end{equation}
	\end{theorem}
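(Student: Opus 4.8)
The plan is to treat this as the "loss of mass at infinity" companion to Theorem~\ref{CCP}, so many of the ingredients are already available. First I would establish the two identities \eqref{T.CCP2.nu_infty} and \eqref{T.CCP2.mu_infty}. These are purely measure-theoretic bookkeeping: for fixed $R>0$, split $\int_{\R^N} = \int_{B_R} + \int_{B_R^c}$. Since $\nu$ is a finite Radon measure obtained as a weak-$*$ limit, one has $\limsup_n \int_{B_R}|u_n|^{q(x)}\,\diff x \to \nu(\R^N)$ as $R\to\infty$ after passing to a subsequence (using that $\nu(\partial B_R)=0$ for all but countably many $R$, and outer regularity of $\nu$), while $\limsup_n\int_{B_R^c}|u_n|^{q(x)}\,\diff x$ converges to $\nu_\infty$ by definition; the subadditivity/superadditivity of $\limsup$ together with these two limits pins down the sum. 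The identity \eqref{T.CCP2.mu_infty} for $\mu$ is identical in structure, using the measure $\mu$ from \eqref{T.CCP1.weak-*.conv.mu}. I would write this carefully but briefly, since it is routine.

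The substantive part is the reverse Hölder-type inequality \eqref{T.CCP2.mu-nu_infty}. The strategy mirrors the classical Lions argument at infinity: introduce a cutoff $\psi_R\in C^\infty(\R^N)$ with $\psi_R=0$ on $B_R$, $\psi_R=1$ on $B_{2R}^c$, $0\le\psi_R\le1$ and $|\nabla\psi_R|\le C/R$. Apply the subcritical/critical Sobolev inequality encoded in \eqref{S_q}, i.e. $S_q\,\|\psi_R u_n\|_{L^{q(\cdot)}(\R^N)}\le \|\psi_R u_n\|_{s,p}$, and then let $n\to\infty$ followed by $R\to\infty$. On the left side, because $\psi_R$ is supported near infinity and $q(x)\to q_\infty$ there (by ($\mathcal{E}_\infty$)), the variable-exponent norm $\|\psi_R u_n\|_{L^{q(\cdot)}}$ is, up to lower-order errors controlled by Proposition~\ref{norm-modular}, comparable to $\big(\int \psi_R^{q_\infty}|u_n|^{q_\infty}\big)^{1/q_\infty}$, and passing to the limit yields exactly $\nu_\infty^{1/q_\infty}$. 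On the right side, the Gagliardo seminorm of $\psi_R u_n$ has to be dominated by the seminorm of $u_n$ restricted near infinity plus a cross term $\iint \frac{|u_n(y)|^{p(x,y)}|\psi_R(x)-\psi_R(y)|^{p(x,y)}}{|x-y|^{N+sp(x,y)}}$; using ($\mathcal{E}_\infty$) to replace $p(x,y)$ by $\overline p$ near infinity, the Lipschitz bound on $\psi_R$, and the already-known boundedness of $\{u_n\}$ together with the compact embedding Theorem~\ref{Theo.subcritical.imb}(iii) on balls, this cross term tends to $0$ as $n\to\infty$ and then $R\to\infty$. Hence $\limsup$ of the right side is at most $\mu_\infty^{1/\overline p}$, giving \eqref{T.CCP2.mu-nu_infty}.

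For the cross-term estimate I would invoke the auxiliary result promised for the Appendix (the one used to prove the CCPs), which should provide precisely the vanishing of such commutator-type double integrals for bounded sequences; this keeps the argument aligned with the proof of Theorem~\ref{CCP} rather than redoing a delicate nonlocal Leibniz estimate from scratch. I expect the main obstacle to be handling the variable exponent in the Gagliardo term of $\psi_R u_n$: unlike the constant-exponent case one cannot simply factor $|\psi_R(x)u_n(x)-\psi_R(y)u_n(y)|^{\overline p}\le 2^{\overline p-1}(\cdots)$ uniformly, so one must split $\R^N\times\R^N$ according to whether $|x-y|<\e_0$ (where $p\equiv\overline p$ by ($\mathcal{P}_2$), so the clean inequality applies) or $|x-y|\ge\e_0$ (where the kernel $|x-y|^{-N-sp(x,y)}$ is bounded below away from the diagonal and the double integral is controlled by $L^{p_*(\cdot)}$-norms, handled via \eqref{IV.critical.imb.2}). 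Assembling these pieces and tracking that all error terms genuinely vanish in the iterated limit $\lim_{R\to\infty}\limsup_{n\to\infty}$ is where the care is needed; the rest is a direct transcription of the Lions-at-infinity scheme.
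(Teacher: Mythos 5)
Your overall architecture matches the paper's: split the total mass into a near part and a far part to get \eqref{T.CCP2.nu_infty}--\eqref{T.CCP2.mu_infty}, and for \eqref{T.CCP2.mu-nu_infty} apply $S_q\|\phi_R u_n\|_{L^{q(\cdot)}}\le\|\phi_R u_n\|_{s,p}$ with a cutoff living at infinity, control both sides via the modular--norm relations and ($\mathcal{E}_\infty$), and kill the commutator term. For the two identities you propose indicator functions of balls plus a Portmanteau-type argument requiring $\nu(\partial B_R)=0$ for almost every $R$. The paper instead keeps the smooth cutoff $\phi_R$ throughout and uses the squeeze $\int_{B_{2R}^c}\le\int\phi_R(\cdot)\le\int_{B_R^c}$, exploiting that $1-\phi_R\in C_c^\infty(\R^N)$ so $\int(1-\phi_R)U_n\,\diff x$ actually \emph{converges} to $\int(1-\phi_R)\,\diff\mu$, which lets one pull that term out of the $\limsup$. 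Both routes are valid; the paper's avoids any discussion of boundary mass and reuses the cutoff it already needs for \eqref{T.CCP2.mu-nu_infty}.

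Two corrections worth making. First, the Appendix result (Proposition~\ref{Reserverd.Holder}) is the reverse-H\"older representation of a Radon measure as a sum of Diracs, used in Theorem~\ref{CCP} to extract the atomic part of $\nu$; the vanishing of the commutator at infinity is a separate statement, Lemma~\ref{aux.le.2}, proved in Section~\ref{CCPs} by direct kernel estimates. Second, and more substantively, you lean on the compact embedding Theorem~\ref{Theo.subcritical.imb}(iii) for the cross term. That step would fail: compactness is not available at infinity, and the commutator's $x$-variable ranges over all of $B_R^c$, not a fixed bounded set. The proof of Lemma~\ref{aux.le.2} never appeals to compactness; it uses only $\|\nabla\phi_R\|_\infty\le 2/R$, the uniform bounds $\sup_n\int_{\R^N}|u_n|^{t(x)}\,\diff x\le M$ for $t\in\{\overline{p},p_*\}$ (coming from \eqref{IV.critical.imb.2} and the boundedness of $\{u_n\}$), and a careful splitting of $\R^N\times\R^N$ by comparing $|x-y|$ with $R$, yielding $\sup_n(\text{commutator})\lesssim R^{-sp^-}$ uniformly in $n$. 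Compactness on balls does enter the at-a-point analogue, Lemma~\ref{aux.le.1}, which is perhaps where the conflation arises; the two cutoff lemmas require different mechanisms. With these adjustments your plan reproduces the paper's proof.
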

The following example provides a nonconstant exponent $p$ that fulfills the conditions in Theorems~\ref{CCP} and \ref{CCP2}.
\begin{example}\rm
	Let $p(x,y)=\overline{p}-\xi(|x-y|)\varphi(x,y)$, where $\xi\in C^\infty(\R)$ such that $0\leq \xi (t)\leq 1$ for all $t\in\R$, $\xi(t)=0$ for $t\leq \e_0$ and $\xi(t)=1$ for $t\geq 1$; $\varphi\in C_c^\infty(\R^N\times\R^N)$, $\varphi(x,y)=\varphi(y,x)$ and $0\leq \varphi(x,y)< \overline{p}-1$ for all $(x,y)\in \R^N\times\R^N$. Here $\e_0$ and $\overline{p}$ are as in {\rm($\mathcal{P}_2$)}.
\end{example}
\subsection{Auxiliary lemmas and proofs of the concentration-compactness principles}
The following auxiliary lemmas are useful to prove Theorems~\ref{CCP} and \ref{CCP2}. 	
\begin{lemma}\label{aux.le.1}
	Let $x_0\in\mathbb{R}^N$ be fixed and let $\psi\in C^{\infty}(\R^N)$ be such that
	$0\le \psi\le 1$, $\psi\equiv 1$ on $B_1$, $\operatorname{supp}(\psi)\subset B_2$ and $\norm{\nabla\psi}_{\infty}\le 2$. For $\rho>0,$ define $\psi_{\rho}(x):=\psi\big(\frac{x-x_0}{\rho}\big)$ for $x\in \R^N$. Let {\rm($\mathcal{P}_2$)} hold and let $\{u_n\}$ be as in Theorem \ref{CCP}. Then, we have
	\begin{equation}\label{Eq.aux.le.1}
	\limsup_{\rho\to 0^+}\, \limsup_{n\to\infty}\int_{\R^{N}}\int_{\R^{N}}|u_n(x)|^{p(x,y)}\frac{|\psi_{\rho}(x)- \psi_{\rho}(y)|^{p(x,y)}}{|x-y|^{N+sp(x,y)}}\,\diff y\diff x=0.
	\end{equation}
\end{lemma}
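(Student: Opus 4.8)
The plan is to estimate the double integral by splitting the region of integration into the near-diagonal part $\{|x-y|<\e_0\}$ and the far part $\{|x-y|\geq \e_0\}$, where $\e_0$ is the constant from $(\mathcal P_2)$. On the far part, $\psi_\rho$ is bounded by $1$ and the kernel $|x-y|^{-N-sp(x,y)}$ is integrable in $y$ uniformly (since $|x-y|\geq\e_0$), so $\int_{\{|x-y|\geq\e_0\}}|\psi_\rho(x)-\psi_\rho(y)|^{p(x,y)}|x-y|^{-N-sp(x,y)}\diff y \leq C$ and we are left with $C\int_{\R^N}|u_n(x)|^{p(x,y)}$-type terms, which after the usual device $|u_n|^{p(x,y)}\leq 1+|u_n|^{\overline p}$ are controlled by $|\{p_*\neq\overline p\}|+\|u_n\|_{L^{\overline p}}^{\overline p}$; but crucially the $\psi_\rho$-difference is supported where $x$ or $y$ lies in $B_{2\rho}(x_0)$, so this contribution is bounded by $C\int_{B_{2\rho}(x_0)\cup(\text{translate})}(1+|u_n(x)|^{\overline p})\diff x$ plus a symmetric term, which tends to $0$ as $\rho\to 0^+$ uniformly in $n$ because $\{|u_n|^{\overline p}\}$ is bounded in $L^1$ and hence (after passing to the weak-* limit, or by equi-integrability arguments) has uniformly small mass on small balls.

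On the near-diagonal part, $(\mathcal P_2)$ forces $p(x,y)=\overline p$ whenever $|x-y|<\e_0$, so the integrand there is exactly $|u_n(x)|^{\overline p}\,|\psi_\rho(x)-\psi_\rho(y)|^{\overline p}\,|x-y|^{-N-s\overline p}$. Now I would use the Lipschitz bound $|\psi_\rho(x)-\psi_\rho(y)|\leq \min\{2,\ 2\rho^{-1}|x-y|\}$ coming from $\|\nabla\psi\|_\infty\leq 2$. Splitting the $y$-integral at $|x-y|=\rho$ gives
\begin{align*}
\int_{\{|x-y|<\e_0\}}\frac{|\psi_\rho(x)-\psi_\rho(y)|^{\overline p}}{|x-y|^{N+s\overline p}}\,\diff y
&\leq \left(\frac{2}{\rho}\right)^{\overline p}\int_{\{|x-y|<\rho\}}\frac{\diff y}{|x-y|^{N+s\overline p-\overline p}} + 2^{\overline p}\int_{\{\rho\leq|x-y|<\e_0\}}\frac{\diff y}{|x-y|^{N+s\overline p}}\\
&\leq C\rho^{-\overline p}\,\rho^{\overline p(1-s)} + C\rho^{-s\overline p} = C\rho^{-s\overline p},
\end{align*}
using $\overline p(1-s)>0$ for the first integral. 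Moreover this is again zero unless $x\in B_{2\rho}(x_0)$ or $|x-y|$ small with $y\in B_{2\rho}(x_0)$; being slightly more careful, on $\{|x-y|<\e_0\}$ the difference $\psi_\rho(x)-\psi_\rho(y)$ vanishes unless $x\in B_{2\rho+\e_0}(x_0)$, so the whole near-diagonal contribution is at most $C\rho^{-s\overline p}\int_{B_{2\rho+\e_0}(x_0)}|u_n(x)|^{\overline p}\diff x$. This is \emph{not} obviously small; to fix the power of $\rho$ I would instead keep the finer split, writing $|\psi_\rho(x)-\psi_\rho(y)|^{\overline p}\leq |\psi_\rho(x)-\psi_\rho(y)|^{\overline p}$ and bounding it pointwise by $2^{\overline p-1}(|\psi_\rho(x)-\psi_\rho(y)|\wedge 2)^{\overline p}$, then integrating first in $y$ over all of $\R^N$ with the cutoff at $|x-y|=\rho$ as above to get the \emph{uniform-in-}$x$ bound $C\rho^{-s\overline p}$, and in parallel noting $\psi_\rho(x)-\psi_\rho(y)\neq 0 \Rightarrow \{x,y\}\cap B_{2\rho}(x_0)\neq\emptyset$. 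Combining, the near-diagonal integral is bounded by $C\rho^{-s\overline p}\big(\int_{B_{2\rho}(x_0)}|u_n|^{\overline p}\diff x + \text{(tail where }x\notin B_{2\rho}(x_0)\text{ but }y\in B_{2\rho}(x_0))\big)$; the second term, by Fubini, is $\int_{B_{2\rho}(x_0)}|u_n(y)|^{?}$...

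The main obstacle is precisely this: after localizing to $B_{2\rho}(x_0)$ one is left with a factor $\rho^{-s\overline p}$ which by itself blows up, so one cannot afford to lose the decay. The correct route — and the one I would ultimately follow — is the standard one (as in Lions-type CCP proofs and the fractional analogues in the cited references): do \emph{not} estimate $|\psi_\rho(x)-\psi_\rho(y)|$ crudely, but apply H\"older's inequality in the double integral to separate $|u_n(x)|^{\overline p}$ from the kernel-times-cutoff factor, raising $|u_n|$ to the critical exponent $\overline p_s^*$. Concretely, writing the near-diagonal integrand as $\big(|u_n(x)|^{\overline p}\big)\cdot\big(|\psi_\rho(x)-\psi_\rho(y)|^{\overline p}|x-y|^{-N-s\overline p}\big)$ and applying H\"older with exponents $\frac{N}{N-s\overline p}$ and $\frac{N}{s\overline p}$ in the $x$-variable (after integrating the kernel in $y$), one gets
\[
\int_{\R^N}\int_{\R^N}|u_n(x)|^{\overline p}\frac{|\psi_\rho(x)-\psi_\rho(y)|^{\overline p}}{|x-y|^{N+s\overline p}}\diff y\diff x \leq \|u_n\|_{L^{\overline p_s^*}(\R^N)}^{\overline p}\left(\int_{\R^N}\Big(\int_{\R^N}\frac{|\psi_\rho(x)-\psi_\rho(y)|^{\overline p}}{|x-y|^{N+s\overline p}}\diff y\Big)^{\frac{N}{s\overline p}}\diff x\right)^{\frac{s\overline p}{N}},
\]
where the first factor is bounded by \eqref{IV.critical.imb.1} and the boundedness of $\{u_n\}$, and the second factor is exactly the quantity $[\psi_\rho]_{s,\overline p,\R^N}^{\overline p}$-type expression appearing in fractional Sobolev theory, which is \emph{scale-invariant}: a change of variables $x\mapsto x_0+\rho x$, $y\mapsto x_0+\rho y$ shows it equals a fixed constant $c(\psi)$ independent of $\rho$. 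That alone does not give $0$ in the limit, so the final and decisive point is that one must also incorporate the localization of $\psi_\rho$: only $x$ with $\mathrm{dist}(x,B_{2\rho}(x_0))$ comparable to $\rho$ contribute, and by absolute continuity of the integral $\int_A |u_n|^{\overline p_s^*}\to 0$ as $|A|\to 0$ uniformly over the (weakly convergent, hence tight up to the limit) sequence — more precisely one passes to the limit $n\to\infty$ first, using $u_n\rightharpoonup u$ and the $L^{\overline p_s^*}$ bound, then $\rho\to 0^+$. So the key steps in order are: (1) split at $|x-y|=\e_0$; (2) dispatch the far part by integrability of the kernel plus the localization of $\psi_\rho$ and $L^1$-absolute continuity; (3) on the near part use $p\equiv\overline p$, H\"older to pull out $\|u_n\|_{L^{\overline p_s^*}}^{\overline p}$, scale-invariance of the remaining kernel integral, and then the support restriction of $\psi_\rho$ together with passing to the limit in $n$ before $\rho$, to conclude. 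The main obstacle is handling the borderline scaling on the near-diagonal piece without losing the vanishing, which is resolved exactly by the H\"older-plus-localization combination rather than any pointwise gradient estimate alone.
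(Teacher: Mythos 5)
Your instinct that the crude pointwise Lipschitz estimate and the raw H\"older-plus-scale-invariance argument each stall at a bounded, non-vanishing quantity is correct, but the final step you propose does not close. The claim ``$\int_A |u_n|^{\overline{p}_s^*}\to 0$ as $|A|\to 0$ uniformly over the sequence'' is precisely the compactness that the concentration-compactness principle shows to be \emph{absent}: the sequence may concentrate an atom of $|u_n|^{\overline{p}_s^*}$ at $x_0$, so that $\int_{B_{K\rho}(x_0)}|u_n|^{\overline{p}_s^*}$ stays bounded away from zero in $n$ for every fixed $\rho$. Weak convergence $u_n\rightharpoonup u$ only gives weak \emph{lower} semicontinuity of $\|\cdot\|_{L^{\overline{p}_s^*}(A)}$, i.e.\ a bound in the useless direction. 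The support statement is also off: after integrating the kernel in $y$ over $\{|x-y|<\epsilon_0\}$, the resulting weight $g(x)$ is nonzero roughly on $B_{2\rho+\epsilon_0}(x_0)$, whose measure does not shrink as $\rho\to 0^+$. What does shrink is the region where $g$ is large, and exploiting that forces a genuine two-scale decomposition of the $x$-variable, not a single localization to $B_{2\rho}(x_0)$.

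The paper avoids the trap by keeping the \emph{subcritical} exponent on $u_n$ and applying the critical H\"older step only to the fixed weak limit $u$. It splits the double integral into $J_1$ over $B_{K\rho}(x_0)\times\R^N$ and $J_2$ over $(\R^N\setminus B_{K\rho}(x_0))\times B_{2\rho}(x_0)$, with $K>4$ an auxiliary parameter sent to $\infty$ at the very end. For $J_1$: a further split of the $y$-integral at $|x-y|=\rho$ and $|x-y|=\epsilon_0$ together with $|\psi_\rho(x)-\psi_\rho(y)|\le \min\{2,\,2\rho^{-1}|x-y|\}$ gives $J_1\le C\rho^{-s\overline{p}}\int_{B_{K\rho}(x_0)}|u_n|^{\overline{p}}\,\diff x$ plus harmless terms. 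One then passes to the limit $n\to\infty$ \emph{before} using H\"older, via the compact embedding $W^{s,p(\cdot,\cdot)}(\R^N)\hookrightarrow\hookrightarrow L^{\overline{p}}_{\loc}(\R^N)$, which is available because $\overline{p}<\overline{p}_s^*$. Only afterwards does H\"older applied to the \emph{fixed} $u$ yield $|B_1|^{s\overline{p}/N}K^{s\overline{p}}\rho^{s\overline{p}}\|u\|_{L^{\overline{p}_s^*}(B_{K\rho}(x_0))}^{\overline{p}}$; the $\rho^{s\overline{p}}$ cancels the $\rho^{-s\overline{p}}$, and $\|u\|_{L^{\overline{p}_s^*}(B_{K\rho}(x_0))}\to 0$ as $\rho\to 0^+$ by absolute continuity of the integral of the single function $u$, not a uniform statement over the sequence. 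For $J_2$: here the paper does use H\"older with $\|u_n\|_{L^{\overline{p}_s^*}(\R^N)}$, but the crucial point is that the dual weight $\rho^N|x-x_0|^{-N-st(x)}$ restricted to $\R^N\setminus B_{K\rho}(x_0)$ has, after the rescaling $x=x_0+\rho z$, an $L^{\overline{p}_s^*/(\overline{p}_s^*-t)}$-modular bounded by $C K^{-\alpha}$ uniformly in $n$ and $\rho$. Letting $K\to\infty$ at the very end kills $J_2$. So the correct proof rests on the order of limits ($n\to\infty$ on the subcritical local integral first, then $\rho\to 0^+$, with $K$ free) and on the two-scale split in $K\rho$; your proposal inverts both, and the inversion is where the argument fails.
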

\begin{lemma}\label{aux.le.2}
Let $\phi\in C^{\infty}(\R^N)$ be such that $0\le \phi\le 1$, $\phi\equiv 0$ on $B_1$, $\phi\equiv 1$ on $\mathbb{R}^N\setminus B_2$, and $\norm{\nabla \phi}_{\infty}\le 2.$ For $R>0$, define $\phi_R(x):=\phi\bigl(\frac{x}{R}\bigr)$ for $x\in \R^N.$
Let {\rm($\mathcal{P}_2$)} hold and let $\{u_n\}$ be as in Theorem \ref{CCP}. Then, we have
\begin{equation}\label{Eq.aux.le.2}
\lim_{R\to\infty}\limsup_{n\to \infty}\int_{\R^{N}}\int_{\R^{N}}|u_{n}(x)|^{p(x,y)}\frac{|\phi_{R}(x)-\phi_{R}(y)|^{p(x,y)}}{|x-y|^{N+sp(x,y)}}\,\diff y\diff x=0.
\end{equation}
\end{lemma}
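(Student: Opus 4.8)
The plan is to prove Lemma~\ref{aux.le.2} by the same scheme used for Lemma~\ref{aux.le.1} (which is itself modeled on the classical Lions concentration-compactness arguments for fractional Sobolev spaces), but now tracking the behaviour near infinity rather than near a point. First I would record that since $\{u_n\}$ is bounded in $W^{s,p(\cdot,\cdot)}(\R^N)$, Proposition~\ref{norm-modular2} gives a uniform bound $M_{\R^N}(u_n)\le C$, so in particular $\int_{\R^N}|u_n|^{\overline p}\,\diff x$ and the Gagliardo-type double integral $\int_{\R^N}\int_{\R^N}\frac{|u_n(x)-u_n(y)|^{p(x,y)}}{|x-y|^{N+sp(x,y)}}\,\diff x\diff y$ are both bounded independently of $n$ (here I use $p(x,x)=\overline p$ from ($\mathcal{P}_2$), together with \eqref{equivalent.norms} and Proposition~\ref{est.Deining} to pass between $L^{p(\cdot)}$ and $L^{\overline p}$ where convenient).

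The core estimate is to split the double integral $\int_{\R^N}\int_{\R^N}|u_n(x)|^{p(x,y)}\frac{|\phi_R(x)-\phi_R(y)|^{p(x,y)}}{|x-y|^{N+sp(x,y)}}\,\diff y\diff x$ according to whether $|x-y|<\e_0$ or $|x-y|\ge\e_0$, where $\e_0$ is the constant from ($\mathcal{P}_2$). On the region $|x-y|\ge\e_0$, the kernel $|x-y|^{-N-sp(x,y)}\le \e_0^{-N-s\overline p}|x-y|^{-N}$ is integrable away from the diagonal, and since $\|\nabla\phi_R\|_\infty\le 2/R$ and $\phi_R$ is bounded, the factor $|\phi_R(x)-\phi_R(y)|^{p(x,y)}$ is controlled by a constant that tends to $0$ as $R\to\infty$ on the set where $\phi_R(x)\ne\phi_R(y)$, which is contained in $B_{2R}\setminus B_R$; combining this with the uniform bound on $\int_{\R^N}|u_n(x)|^{\overline p}\,\diff x$ (splitting $|u_n(x)|^{p(x,y)}\le 1+|u_n(x)|^{\overline p}$) and a dominated-convergence/explicit-computation argument shows this part vanishes as $R\to\infty$, uniformly in $n$. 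On the region $0<|x-y|<\e_0$ we have $p(x,y)=\overline p$, so the integrand becomes $|u_n(x)|^{\overline p}\frac{|\phi_R(x)-\phi_R(y)|^{\overline p}}{|x-y|^{N+s\overline p}}$; using $|\phi_R(x)-\phi_R(y)|\le \min\{2,\frac{2}{R}|x-y|\}$ and the change of variables $z=x-y$, the inner integral $\int_{|z|<\e_0}\frac{|\phi_R(x)-\phi_R(x-z)|^{\overline p}}{|z|^{N+s\overline p}}\,\diff z$ is bounded by a constant independent of $R$ and supported (in $x$) essentially in an $\e_0$-neighborhood of $B_{2R}\setminus B_R$, i.e. in $B_{2R+\e_0}\setminus B_{R-\e_0}$; then $\int_{B_{2R+\e_0}\setminus B_{R-\e_0}}|u_n(x)|^{\overline p}\,\diff x$ is the tail of a uniformly bounded sequence, but here I must be a little careful, since tails of a bounded-in-$L^{\overline p}$ sequence need not be uniformly small.

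To handle that last point I would invoke the weak-$*$ convergence \eqref{T.CCP1.weak-*.conv.mu} (which holds after passing to a subsequence, and is part of the standing hypotheses via ``$\{u_n\}$ as in Theorem~\ref{CCP}'') together with $\mu(\R^N)<\infty$: for any $\delta>0$ there is $R_0$ with $\mu(\overline{B_{R_0}^c})<\delta$, whence $\limsup_n \int_{B_{R_0}^c}\big[|u_n|^{\overline p}+\int_{\R^N}\frac{|u_n(x)-u_n(y)|^{p(x,y)}}{|x-y|^{N+sp(x,y)}}\,\diff y\big]\,\diff x\le\delta$ by the portmanteau theorem applied to the closed set $B_{R_0}^c$ (or a suitable open neighborhood), giving uniform smallness of exactly the tails that appear above. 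Alternatively one can use a cut-off $\phi_{R_0}$-test-function computation as a self-contained substitute. Feeding this smallness back into both regions and then letting $R\to\infty$ and $\delta\to0$ yields \eqref{Eq.aux.le.2}. The main obstacle I anticipate is precisely the bookkeeping in the subcritical/near-diagonal region $|x-y|<\e_0$: one must show that the ``interaction'' between the support of $\nabla\phi_R$ and the mass of $u_n$ is negligible, which requires quantifying that $|\phi_R(x)-\phi_R(y)|$ is nonzero only when both $x$ and $y$ lie near the dyadic annulus $\{R\le|x|\le 2R\}$, and then pairing this with the tail estimate for $\int|u_n|^{\overline p}$; the constant-exponent simplification on this region (thanks to the structural assumption in ($\mathcal{P}_2$) that $p\equiv\overline p$ for $|x-y|<\e_0$) is what makes the argument go through cleanly, and I would emphasize that this is where that hypothesis is essential.
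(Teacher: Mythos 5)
Your high-level plan — split by $|x-y|<\e_0$ versus $|x-y|\ge\e_0$ — is a genuinely different decomposition from the paper's (which splits by $x\in B_R^c$ versus $x\in B_R$, then further by $|x-y|\lessgtr R$ or $|x-y|\lessgtr R/2$), and the near-diagonal half works if you track constants carefully: with $|\phi_R(x)-\phi_R(x-z)|\le \frac{2}{R}|z|$ for $|z|<\e_0<1<R$, the inner integral is $O(R^{-\overline p})$ (not merely ``bounded''), and paired with the uniform bound on $\int|u_n|^{\overline p}\,\diff x$ you get decay with no tail estimate at all. However, the far-from-diagonal region has a real gap: the elementary bound $|u_n(x)|^{p(x,y)}\le 1+|u_n(x)|^{\overline p}$ produces a constant ``$1$'' which, integrated against $\frac{|\phi_R(x)-\phi_R(y)|^{p(x,y)}}{|x-y|^{N+sp(x,y)}}$, does not vanish — the set $\{\phi_R(x)\ne\phi_R(y)\}$ is \emph{not} contained in $B_{2R}\setminus B_R$ (it contains all of $B_{2R}^c\times B_R$, where the difference equals $1$), and the resulting $x$-integral scales like a positive power of $R$. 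The paper avoids this by using instead $|u_n(x)|^{p(x,y)}\le |u_n(x)|^{\overline p}+|u_n(x)|^{p_\ast(x)}$ and exploiting the structural hypothesis $|\{p_\ast\ne\overline p\}|<\infty$ from $(\mathcal{P}_2)$ (via \eqref{IV.critical.imb.2}) to get the uniform bound \eqref{(31_1)} on both $\int|u_n|^{\overline p}$ and $\int|u_n|^{p_\ast(x)}$; this is the step where the extra assumption on $p_\ast$ enters, and your argument misses it.

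Separately, the portmanteau step you propose for the tail estimate is incorrect: weak-$\ast$ convergence in $\mathcal{M}(\R^N)=(C_0(\R^N))^\ast$ does \emph{not} give $\limsup_n\mu_n(B_{R_0}^c)\le\mu(B_{R_0}^c)$ for the unbounded closed set $B_{R_0}^c$ — mass can escape to infinity (take $\mu_n=\delta_{n e_1}\overset{\ast}{\rightharpoonup}0$ yet $\mu_n(B_{R_0}^c)=1$), and indeed $\mu_\infty$ in Theorem~\ref{CCP2} is defined precisely to quantify this leakage; assuming it vanishes would be circular. Fortunately this step is also unnecessary: the paper's proof shows that \emph{every} piece of the decomposition is $O(R^{-sp^-})$ \emph{uniformly in $n$}, with the decay coming from the kernel integrals $\int_{|z|\ge R}|z|^{-N-st}\,\diff z\sim R^{-st}$ and from $\|\nabla\phi_R\|_\infty\le 2/R$ against the annular volume, rather than from any smallness of $\int_{B_R^c}|u_n|^{\overline p}\,\diff x$ (which is merely bounded, not small). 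If you repair the far-region estimate along the paper's lines and drop the tail-estimate lemma, your decomposition can be made to work.
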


\begin{proof}[Proof of Lemma~\ref{aux.le.1}]
Set
$$
J(n,\rho)=\int_{\R^{N}}\int_{\R^{N}}|u_n(x)|^{p(x,y)}\frac{|\psi_{\rho}(x)- \psi_{\rho}(y)|^{p(x,y)}}{|x-y|^{N+sp(x,y)}}\,\diff y\diff x.
$$
Let $K>4$ be arbitrary and fixed and let $\rho\in (0,\frac{\e_0}{2K})$. Clearly,
$$
\R^N\times \R^N= \left[(\R^N\setminus B_{2\rho}(x_0))\times (\R^N\setminus B_{2\rho}(x_0))\right]\cup \left[B_{K\rho}(x_0)\times \R^N\right]\cup  \left[(\R^N\setminus B_{K\rho}(x_0))\times B_{2\rho}(x_0)\right].
$$
From this and the fact that $|\psi_{\rho}(x)-\psi_{\rho}(y)|=0$ on $(\R^N\setminus B_{2\rho}(x_0))\times (\R^N\setminus B_{2\rho}(x_0))$, we have
\begin{align}\label{P.T.CCP1.decompose.J}
J(n,\rho)= &\int_{B_{K\rho}(x_0)}\int_{\R^{N}}|u_n(x)|^{p(x,y)}\frac{|\psi_{\rho}(x)- \psi_{\rho}(y)|^{p(x,y)}}{|x-y|^{N+st(x)}}\,\diff y\diff x\notag\\
& \ +\int_{\R^N\setminus B_{K\rho}(x_0)}\int_{B_{2\rho}(x_0)}|u_n(x)|^{p(x,y)}\frac{|\psi_{\rho}(x)- \psi_{\rho}(y)|^{p(x,y)}}{|x-y|^{N+st(x)}}\,\diff y\diff x\notag\\
=:&\,J_1(n,\rho)+J_2(n,\rho).
\end{align}
We first estimate $J_1(n,\rho)$. Decompose
\begin{align}\label{P.T.CCP1.decompose.J1}
J_1(n,\rho)=& \int_{B_{K\rho}(x_0)}|u_n(x)|^{p(x,y)}\int_{\{|x-y|\le \rho\}}\frac{|\psi_{\rho}(x)- \psi_{\rho}(y)|^{p(x,y)}}{|x-y|^{N+sp(x,y)}}\,\diff y\diff x\notag\\
& +\int_{B_{K\rho}(x_0)}|u_n(x)|^{p(x,y)}\int_{\{\rho<|x-y|<\e_0\}}\frac{|\psi_{\rho}(x)- \psi_{\rho}(y)|^{p(x,y)}}{|x-y|^{N+sp(x,y)}}\,\diff y\diff x \notag\\
& +\int_{B_{K\rho}(x_0)}|u_n(x)|^{p(x,y)}\int_{\{|x-y|\geq\e_0\}}\frac{|\psi_{\rho}(x)- \psi_{\rho}(y)|^{p(x,y)}}{|x-y|^{N+sp(x,y)}}\,\diff y\diff x\notag\\
=:&\sum_{i=1}^3J_1^{(i)}(n,\rho).
\end{align}
Note that $p(x,y)=\overline{p}$ on  $|x-y|<\epsilon_0$ and $|\psi_{\rho}(x)- \psi_{\rho}(y)|\leq \frac{2}{\rho}|x-y|$, we have
\begin{equation*}
J_1^{(1)}(n,\rho)\le\left(\frac{2}{\rho}\right)^{\overline{p}}\int_{B_{K\rho}(x_0)}|u_n(x)|^{\overline{p}}\int_{\{|x-y|\le \rho\}}|x-y|^{-N+(1-s)\overline{p}}\,\diff y\diff x.
\end{equation*}
Hence,
\begin{equation}\label{P.T.CCP1.est.J^1_1.1}
J_1^{(1)}(n,\rho)\le\frac{2^{\overline{p}}N|B_1|}{(1-s)\overline{p}}\,\rho^{-s\overline{p}}\int_{B_{K\rho}(x_0)}|u_n(x)|^{\overline{p}}\diff x.
\end{equation}
By \eqref {T.CCP1.weak.conv},  we have that $u_n\to u$ in $L^{\overline{p}}(B_{K\rho}(x_0))$ in view of and Theorem~\ref{Theo.subcritical.imb} (iii). From this fact and \eqref{P.T.CCP1.est.J^1_1.1} we obtain
\begin{equation}\label{P.T.CCP1.est.J^1_1.2}
\limsup_{n\to\infty}J_1^{(1)}(n,\rho)\le \frac{2^{\overline{p}}N|B_1|}{(1-s)\overline{p}}\,\rho^{-s\overline{p}}\int_{B_{K\rho}(x_0)}|u(x)|^{\overline{p}}\diff x.
\end{equation}
Using the H\"older inequality we have
\begin{equation}\label{P.T.CCP1.est.J^1_1.3}
\int_{B_{K\rho}(x_0)}|u(x)|^{\overline{p}}\diff x\leq |B_1|^{\frac{s\overline{p}}{N}}K^{s\overline{p}}\rho^{s\overline{p}} \left(\int_{B_{K\rho}(x_0)}|u(x)|^{\overline{p}_s^\ast}\diff x\right)^{\frac{\overline{p}}{\overline{p}_s^\ast}}
\end{equation}
From \eqref{P.T.CCP1.est.J^1_1.2}, \eqref{P.T.CCP1.est.J^1_1.3} and the fact that $u\in L^{\overline{p}_s^\ast}(\R^N)$ (see \eqref{IV.critical.imb.1}) we arrive at
\begin{equation}\label{P.T.CCP1.est.J^1_1}
\limsup_{\rho\to 0^+}\, \limsup_{n\to\infty}J_1^{(1)}(n,\rho)=0.
\end{equation}
On the other hand, we have
\begin{equation*}
J_1^{(2)}(n,\rho)\le\int_{B_{K\rho}(x_0)}|u_n(x)|^{\overline{p}}\int_{\{\rho<|x-y|<\e_0\}}|x-y|^{-N-s\overline{p}}\,\diff y\diff x.
\end{equation*}
That is,
\begin{equation}\label{P.T.CCP1.est.J^2_1.1}
J_1^{(2)}(n,\rho)\le\frac{N|B_1|}{s\overline{p}}\int_{B_{K\rho}(x_0)}|u_n(x)|^{\overline{p}}\left(\rho^{-s\overline{p}}-\e_0^{-s\overline{p}}\right)\diff x.
\end{equation}
Arguing as that obtained \eqref{P.T.CCP1.est.J^1_1.2} we deduce from \eqref{P.T.CCP1.est.J^2_1.1} that
\begin{equation}\label{P.T.CCP1.est.J^2_1.2}
\limsup_{n\to\infty}J_1^{(2)}(n,\rho)\le\frac{N|B_1|}{s\overline{p}}\int_{B_{K\rho}(x_0)}|u(x)|^{\overline{p}}\left(\rho^{-s\overline{p}}-\e_0^{-s\overline{p}}\right)\diff x.
\end{equation}
Then, 
using \eqref{P.T.CCP1.est.J^1_1.3} and the fact that $u\in L^{\overline{p}_s^\ast}(\R^N)$ again we obtain from \eqref{P.T.CCP1.est.J^2_1.2} that
\begin{equation}\label{P.T.CCP1.est.J^2_1}
\limsup_{\rho\to 0^+}\, \limsup_{n\to\infty}J_1^{(2)}(n,\rho)=0.
\end{equation}
In order to estimate $J_1^{(3)}(n,\rho)$, we first note that
\begin{align*}
J^{(3)}_1(n,\rho)\leq& \int_{B_{K\rho}(x_0)}\left(1+|u_n(x)|^{\overline{p}}\right)\int_{\{|x-y|\geq\e_0\}}\left(\frac{1}{|x-y|^{N+sp^-}}+\frac{1}{|x-y|^{N+s\overline{p}}}\right)\,\diff y\diff x\notag\\
\leq& \frac{2N|B_1|\e_0^{-s\overline{p}}}{sp^-}\int_{B_{K\rho}(x_0)}\left(1+|u_n(x)|^{\overline{p}}\right)\diff x.
\end{align*}
Then, arguing as before we obtain 
\begin{equation}\label{P.T.CCP1.est.J^3_1}
\limsup_{\rho\to 0^+}\, \limsup_{n\to\infty}J_1^{(3)}(n,\rho)=0.
\end{equation}
Utilizing \eqref{P.T.CCP1.est.J^1_1}, \eqref{P.T.CCP1.est.J^2_1} and \eqref{P.T.CCP1.est.J^3_1}, we infer from \eqref{P.T.CCP1.decompose.J1} that
\begin{equation}\label{P.T.CCP1.est.J_1}
\limsup_{\rho\to 0^+}\, \limsup_{n\to\infty}J_1(n,\rho)=0.
\end{equation}
Next, we estimate $J_2(n,\rho)$. Note that
\begin{align}\label{P.T.CCP1.decompose.J_2}
J_2(n,\rho)\leq&\sum_{t\in\{p_\ast,\overline{p}\}}\int_{\R^N\setminus B_{K\rho}(x_0)}\int_{B_{2\rho}(x_0)}|u_n(x)|^{t(x)}\frac{|\psi_{\rho}(x)- \psi_{\rho}(y)|^{t(x)}}{|x-y|^{N+st(x)}}\,\diff y\diff x\notag\\
&=:\,\sum_{t\in\{p_\ast,\overline{p}\}}J_2(n,\rho,t).
\end{align}
Let $t\in\{p_\ast,\overline{p}\}$. Using the fact that
$$
|x-y|\ge |x-x_0|-|y-x_0|\ge |x-x_0|-2\rho\ge \frac{1}{2}|x-x_0|, \ \ \forall (x,y)\in (\R^N\setminus B_{K\rho}(x_0))\times B_{2\rho}(x_0),
$$
we have
$$
J_2(n,\rho,t)\le2^{N+s\overline{p}} \int_{\R^N\setminus B_{K\rho}(x_0)}\frac{|u_n(x)|^{t(x)}}{|x-x_0|^{N+st(x)}}
\int_{B_{2\rho}(x_0)}\,\diff y\diff x.
$$
That is,
$$
J_2(n,\rho,t)\le2^{2N+s\overline{p}}|B_1| \int_{\R^N\setminus B_{K\rho}(x_0)}\frac{|u_n(x)|^{t(x)}}{|x-x_0|^{N+st(x)}}\rho^N\diff x.
$$
Invoking Proposition~\ref{Holder ineq} again and using the boundedness of $\{u_n\}$ in $L^{\overline{p}_s^\ast}(\mathbb{R}^N)$, we deduce from the last inequality that
\begin{align}\label{P.T.CCP1.est.phi_rho9}
J_2(n,\rho,t)  \le& C_1\big\||u_n|^{t(x)}\big\|_{L^{\frac{\overline{p}_s^\ast}{t(\cdot)}}(\R^N\setminus B_{K\rho}(x_0))} \big\||x-x_0|^{-N-st(x)}\rho^{N}\big\|_{L^{\frac{\overline{p}_s^\ast}{\overline{p}_s^\ast-t(\cdot)}}(\R^N\setminus B_{K\rho}(x_0))}\notag\\
\le& C_2\max\Biggl\{\Bigl(\int_{\R^N\setminus B_{K\rho}(x_0)}|x-x_0|^{-\frac{(N+st(x))\overline{p}_s^\ast}{\overline{p}_s^\ast-t(x)}}\rho^{\frac{N\overline{p}_s^\ast}{\overline{p}_s^\ast-t(x)}}\,\diff x\Bigr)^{\bigl(\frac{\overline{p}_s^\ast-t}{\overline{p}_s^\ast}\bigr)^+},\notag\\
&\quad\quad\quad\quad\Bigl(\int_{\R^N\setminus B_{K\rho}(x_0)}|x-x_0|^{-\frac{(N+st(x))\overline{p}_s^\ast}{\overline{p}_s^\ast-t(x)}}\rho^{\frac{N\overline{p}_s^\ast}{\overline{p}_s^\ast-t(x)}}\,\diff x\Bigr)^{\bigl(\frac{\overline{p}_s^\ast-t}{\overline{p}_s^\ast}\bigr)^-}\Biggr\}.
\end{align}
Here and in the remainder of the proof $C_i$ ($i\in\N$) is a positive constant independent of $n,\rho$ and $K.$ By changing variable $x=x_0+\rho z$ we have
\begin{align}\label{P.T.CCP1.est.phi_rho10}
&\int_{\R^N\setminus B_{K\rho}(x_0)}|x-x_0|^{-\frac{(N+st(x))\overline{p}_s^\ast}{\overline{p}_s^\ast-t(x)}}\rho^{\frac{N\overline{p}_s^\ast}{\overline{p}_s^\ast-t(x)}}\,\diff x\notag\\
& \ =\int_{\{|z|\ge K\}}|z|^{-\frac{(N+st(x_0+\rho z))\overline{p}_s^\ast}{\overline{p}_s^\ast-t(x_0+\rho z)}}\,\rho^{N-\frac{st(x_0+\rho z)\overline{p}_s^\ast}{\overline{p}_s^\ast-t(x_0+\rho z)}}\,\diff z.
\end{align}
Note that for any $x\in\mathbb{R}^N$, it holds that $N-\frac{st(x)\overline{p}_s^\ast}{\overline{p}_s^\ast-t(x)}\ge 0$
due to $t(x)\leq \overline{p}$ and $\frac{(N+st(x))\overline{p}_s^\ast}{\overline{p}_s^\ast-t(x)}=N+\frac{t(x)(N+s\,\overline{p}_s^\ast)}{\overline{p}_s^\ast-t(x)}> N+\alpha,$ where $\alpha:=\frac{N+\overline{p}_s^\ast}{\overline{p}_s^\ast-1}>0$. Plugging this into \eqref{P.T.CCP1.est.phi_rho10} we obtain
$$
\int_{\R^N\setminus B_{K\rho}(x_0)}|x-x_0|^{-\frac{(N+st(x))\overline{p}_s^\ast}{\overline{p}_s^\ast-t(x)}}\rho^{\frac{N\overline{p}_s^\ast}{\overline{p}_s^\ast-t(x)}}\,\diff x
\le \int_{\{|z|\ge K\}}|z|^{-N-\alpha}\,\diff z=\frac{N|B_1|}{\alpha}K^{-\alpha}.
$$
Combining this with \eqref{P.T.CCP1.decompose.J_2} and \eqref{P.T.CCP1.est.phi_rho9} we derive
$$
J_2(n,\rho)\le C_3K^{-\alpha\left(\frac{\overline{p}_s^\ast-\overline{p}}{\overline{p}_s^\ast}\right)}
$$
for all $n\in \Bbb N$ and all $\rho\in (0,\frac{\e_0}{2K})$. Thus,
\begin{equation*}\label{(34)}
\limsup_{\rho\to 0^+}\, \limsup_{n\to \infty}J_2(n,\rho)\le C_3K^{-\alpha\left(\frac{\overline{p}_s^\ast-\overline{p}}{\overline{p}_s^\ast}\right)}.
\end{equation*}
Since $K>4$ was chosen arbitrarily, the last inequality yields
\begin{equation}\label{P.T.CCP1.est.J_2}
\limsup_{\rho\to 0^+}\, \limsup_{n\to \infty}J_2(n,\rho)=0.
\end{equation}
From \eqref{P.T.CCP1.decompose.J}, \eqref{P.T.CCP1.est.J_1}, and \eqref{P.T.CCP1.est.J_2}, we obtain \eqref{Eq.aux.le.2} and the proof is complete.

	\end{proof}

\begin{proof}[Proof of Lemma~\ref{aux.le.2}] Let $R>2$ and decompose
\begin{align}\label{PL4.3.I12}
\int_{\R^N}&\int_{\R^N}|u_{n}(x)|^{p(x,y)}\frac{|\phi_{R}(x)-\phi_{R}(y)|^{p(x,y)}}{|x-y|^{N+sp(x,y)}}\,\diff y\diff x\notag\\
=&\int_{B_R^c}\int_{\R^N}|u_{n}(x)|^{p(x,y)}\frac{|\phi_{R}(x)-\phi_{R}(y)|^{p(x,y)}}{|x-y|^{N+sp(x,y)}}\,\diff y\diff x\notag\\
&+\int_{B_R}\int_{\R^N}|u_{n}(x)|^{p(x,y)}\frac{|\phi_{R}(x)-\phi_{R}(y)|^{p(x,y)}}{|x-y|^{N+sp(x,y)}}\,\diff y\diff x=:I_1(n,R)+I_2(n,R).
\end{align}
First, we estimate $I_1(n,R)$. By rearranging
$$
\aligned
I_1(n,R)= \int_{B_R^c}\int_{\R^N}\Bigl(\frac{|u_{n}(x)||\phi_{R}(x)-\phi_{R}(y)|}{|x-y|^{s}}\Bigr)^{p(x,y)}\frac{1}{|x-y|^N}\,\diff y\diff x,
\endaligned
$$
we easily get
\begin{align}\label{(27_1)}
I_1(n,R)\le&\int_{B_R^c}\int_{\R^N}\left[\frac{|u_{n}(x)|^{\overline{p}}|\phi_{R}(x)-\phi_{R}(y)|^{\overline{p}}}{|x-y|^{s\overline{p}}}+
\frac{|u_{n}(x)|^{p_*(x)}|\phi_{R}(x)-\phi_{R}(y)|^{p_*(x)}}{|x-y|^{sp_*(x)}}\right]\frac{\diff y\diff x}{|x-y|^N}\notag\\
&=\int_{B_R^c}\int_{\R^N}|u_{n}(x)|^{\overline{p}}\frac{|\phi_{R}(x)-\phi_{R}(y)|^{\overline{p}}}{|x-y|^{N+s\overline{p}}}\diff y\diff x\notag\\
&\quad+ \int_{B_R^c}\int_{\R^N}|u_{n}(x)|^{p_*(x)}\frac{|\phi_{R}(x)-\phi_{R}(y)|^{p_*(x)}}{|x-y|^{N+sp_*(x)}}\, \diff y\diff x\notag\\
&=:I_1(n,R,\overline{p})+I_1(n,R,p_*).
\end{align}
By \eqref{IV.critical.imb.2} and the boundedness of  $\{u_n\}$ in $W^{s,p(\cdot,\cdot)}(\R^N)$, we can find $M>0$ such that
\begin{equation}\label{(31_1)}
\max_{t\in \{\overline{p},p_*\}}\sup_{n\in \Bbb N}\int_{\R^N}|u_n(x)|^{t(x)}\, \diff x\le M.
\end{equation}
Let $t\in\{\overline{p},p_\ast\}$.
We have
\begin{align}\label{(28_1)}
I_1(n,R,t)&=\int_{B_R^c}\int_{\R^N}|u_{n}(x)|^{t(x)}\frac{|\phi_{R}(x)-\phi_{R}(y)|^{t(x)}}{|x-y|^{N+st(x)}}\diff y\diff x\notag\\
&=\int_{B_R^c}|u_{n}(x)|^{t(x)}\int_{\{|x-y|\ge R\}}\frac{|\phi_{R}(x)-\phi_{R}(y)|^{t(x)}}{|x-y|^{N+st(x)}}\diff y\diff x\notag\\
&\quad+\int_{B_R^c}|u_{n}(x)|^{t(x)}\int_{\{|x-y|\le R\}}\frac{|\phi_{R}(x)-\phi_{R}(y)|^{t(x)}}{|x-y|^{N+st(x)}}\diff y\diff x\notag\\
&=:I_1^{(1)}(n,R,t)+I_1^{(2)}(n,R,t).
\end{align}
We have
\begin{equation*}\label{(29_1)}
\aligned
I_1^{(1)}(n,R,t)&\le\int_{B_R^c}|u_{n}(x)|^{t(x)}\int_{B_R^c}\frac{\diff z}{|z|^{N+st(x)}}\diff x=N|B_1|\int_{B_R^c}\frac{|u_{n}(x)|^{t(x)}}{st(x)R^{st(x)}}\diff x\\
&\le \frac{N|B_1|}{sp^-R^{sp^-}}\int_{B_R^c}|u_{n}(x)|^{t(x)}\diff x.
\endaligned
\end{equation*}
Combining this with \eqref{(31_1)} gives
\begin{equation}\label{(32_1)}
\sup_{n\in \Bbb N}\, I_1^{(1)}(n,R,t)\le \frac{N|B_1|M}{sp^-}R^{-sp^-}.
\end{equation}
On the other hand, we have
$$
\aligned
I_1^{(2)}(n,R,t)&\le \int_{B_{R}^c}|u_n(x)|^{t(x)}\round{\int_{\{|x-y|\le R\}}\Bigl(\frac{2}{R}\Bigr)^{t(x)}|x-y|^{-N+(1-s)t(x)}\,\diff y}\diff x\\
&\quad =\int_{B_{R}^c}|u_n(x)|^{t(x)}\Bigl(\frac{2}{R}\Bigr)^{t(x)}N|B_1|\frac{R^{(1-s)t(x)}}{(1-s)t(x)}\,\diff x\\
&\le\frac{2^{\overline{p}}N|B_1|}{1-s}R^{-sp^-}\int_{B_{R}^c}|u_n(x)|^{t(x)}\,\diff x.
\endaligned
$$
Combining this and \eqref{(31_1)} yields
\begin{equation}\label{(33_1)}
\sup_{n\in \Bbb N}\, I_1^{(2)}(n,R,t)\le \frac{2^{\overline{p}}N|B_1|M}{1-s}R^{-sp^-}.
\end{equation}
From \eqref{(27_1)}, \eqref{(28_1)}, \eqref{(32_1)} and \eqref{(33_1)} we obtain
\begin{equation*}
\sup_{n\in \Bbb N}\, I_1(n,R)\le \frac{2^{\overline{p}+1}N|B_1|M}{s(1-s)}R^{-sp^-}
\end{equation*}
and hence,
\begin{equation}\label{I1(R,phiR).lim}
\lim_{R\to\infty}\limsup_{n\to \infty}\, I_1(n,R)=0.
\end{equation}
Next, we estimate $I_2(n,R).$ Fix $\sigma\in (0,1/2)$ and decompose
\begin{align}\label{I2(R,phiR)}
I_2(n,R)=&\int_{B_R\setminus B_{\sigma R}}\int_{\{|x-y|\leq\frac{R}{2}\}}|u_{n}(x)|^{p(x,y)}\frac{|\phi_{R}(x)-\phi_{R}(y)|^{p(x,y)}}{|x-y|^{N+sp(x,y)}}\,\diff y\diff x\notag\\
&+\int_{B_R\setminus B_{\sigma R}}\int_{\{|x-y|>\frac{R}{2}\}}|u_{n}(x)|^{p(x,y)}\frac{|\phi_{R}(x)-\phi_{R}(y)|^{p(x,y)}}{|x-y|^{N+sp(x,y)}}\,\diff y\diff x\notag\\
&+\int_{B_{\sigma R}}\int_{\mathbb{R}^N}|u_{n}(x)|^{p(x,y)}\frac{|\phi_{R}(x)-\phi_{R}(y)|^{p(x,y)}}{|x-y|^{N+sp(x,y)}}\,\diff y\diff x\notag\\
&=:I_2^{(1)}(n,R,\sigma)+I_2^{(2)}(n,R,\sigma)+I_2^{(3)}(n,R,\sigma).
\end{align}
Note that for any $(x,y)\in \mathbb{R}^N\times \mathbb{R}^N$, we have
\begin{align*}
\frac{|\phi_{R}(x)-\phi_{R}(y)|^{p(x,y)}}{|x-y|^{N+sp(x,y)}}&\leq |x-y|^{-N+(1-s)p(x,y)}\|\nabla \phi_R\|_\infty^{p(x,y)}\notag\\
&\leq |x-y|^{-N}\left||x-y|^{1-s}\frac{2}{R}\right|^{p(x,y)}\notag\\
&\leq 2^{\overline{p}}\sum_{t\in\{\overline{p},p^-\}}R^{-t}|x-y|^{-N+(1-s)t}.
\end{align*}
Thus, for $x\in B_R\setminus B_{\sigma R}$ we have
\begin{align*}
\int_{\{|x-y|\leq\frac{R}{2}\}}\frac{|\phi_{R}(x)-\phi_{R}(y)|^{p(x,y)}}{|x-y|^{N+sp(x,y)}}\,\diff y&\leq 2^{\overline{p}}\sum_{t\in\{\overline{p},p^-\}}\int_{\{|x-y|\leq\frac{R}{2}\}}R^{-t}|x-y|^{-N+(1-s)t}\,\diff y\notag\\
&\leq 2^{2\overline{p}}N|B_1|\sum_{t\in\{\overline{p},p^-\}}\frac{R^{-st}}{(1-s)t}.
\end{align*}
From this and \eqref{(31_1)} we obtain
\begin{align}\label{I2(R,phiR).1}
I_2^{(1)}(n,R,\sigma)\leq&\int_{B_R\setminus B_{\sigma R}}\left[|u_n(x)|^{\overline{p}}+|u_n(x)|^{p_*(x)}\right]\int_{\{|x-y|\leq\frac{R}{2}\}}\frac{|\phi_{R}(x)-\phi_{R}(y)|^{p(x,y)}}{|x-y|^{N+sp(x,y)}}\,\diff y\diff x\notag\\
\leq&\frac{2^{1+2\overline{p}}N|B_1|}{(1-s)p^-}R^{-sp^-}\int_{B_R\setminus B_{\sigma R}}\left[|u_n(x)|^{\overline{p}}+|u_n(x)|^{p_*(x)}\right]\diff x\notag\\
\leq&\frac{2^{2+2\overline{p}}N|B_1|M}{(1-s)p^-}R^{-sp^-},\ \forall n\in\N.
\end{align}
Using \eqref{(31_1)} again, we have
\begin{align}\label{I2(R,phiR).2}
I_2^{(2)}(n,R,\sigma)\leq&\int_{B_R\setminus B_{\sigma R}}\left[|u_n(x)|^{\overline{p}}+|u_n(x)|^{p_*(x)}\right]\int_{\{|x-y|>\frac{R}{2}\}}\frac{\diff y}{|x-y|^{N+sp(x,y)}}\,\diff x\notag\\
\leq&\int_{B_R\setminus B_{\sigma R}}\left[|u_n(x)|^{\overline{p}}+|u_n(x)|^{p_*(x)}\right]\int_{\{|z|>\frac{R}{2}\}}\frac{\diff z}{|z|^{N+sp^-}}\,\diff x\notag\\\notag\\
\leq&\frac{2^{1+sp^-}N|B_1|M}{sp^-}R^{-sp^-},\ \forall n\in\N.
\end{align}
Finally, to estimate $I_2^{(3)}(n,R,\sigma)$ we first note that  $\phi_{R}(x)-\phi_{R}(y)=0$ for all $(x,y)\in B_{\sigma R}\times B_R$ and $|x-y|\geq (1-\sigma)R$ for all $(x,y)\in B_{\sigma R}\times B_R^c$. Using these facts and invoking \eqref{(31_1)} again, we have
\begin{align}\label{I2(R,phiR).3}
I_2^{(3)}(n,R,\sigma)=&\int_{B_{\sigma R}}\int_{B_R^c}|u_n(x)|^{p(x,y)}\frac{|\phi_{R}(x)-\phi_{R}(y)|^{p(x,y)}}{|x-y|^{N+sp(x,y)}}\,\diff y\diff x\notag\\
\leq&\int_{B_{\sigma R}}\left[|u_n(x)|^{\overline{p}}+|u_n(x)|^{p_*(x)}\right]\int_{B_R^c}\frac{\diff y}{|x-y|^{N+sp(x,y)}}\,\diff x\notag\\
\leq&\int_{B_{\sigma R}}\left[|u_n(x)|^{\overline{p}}+|u_n(x)|^{p_*(x)}\right]\int_{\{|z|\geq (1-\sigma)R\}}\frac{\diff z}{|z|^{N+sp^-}}\,\diff x\notag\\
\leq&\frac{2N|B_1|(1-\sigma)^{-sp^-}M}{sp^-}R^{-sp^-}.
\end{align}
Making use of \eqref{I2(R,phiR).1}-\eqref{I2(R,phiR).3}, we deduce from \eqref{I2(R,phiR)} that
\begin{equation}\label{I2(R,phiR).lim}
\lim_{R\to\infty}\limsup_{n\to \infty}\, I_2(n,R)=0.
\end{equation}
Finally, \eqref{Eq.aux.le.1} follows from \eqref{PL4.3.I12}, \eqref{I1(R,phiR).lim} and \eqref{I2(R,phiR).lim}.	The proof is complete.
	\end{proof}

We now prove the first concentration-compactness principle.
\begin{proof}[Proof of Theorem \ref{CCP}]
	Let $v_n=u_n-u$. Then,
\begin{equation}\label{P.T4.1.weakConvergence}
v_n\ \rightharpoonup \ 0 \ \ \text{ in } \  \ W^{s,p(\cdot,\cdot)}(\R^N).
\end{equation}
Invoking Theorem~\ref{Theo.subcritical.imb} we deduce from \eqref{P.T4.1.weakConvergence} that
\begin{equation}\label{P.T4.1.locConvergence }
v_n \ \to \ 0 \ \ \text{ in } \  \ L^{r(\cdot)}_{\loc}(\R^N)
\end{equation}
for any $r\in C_+(\R^N)$ satisfying $r(x)<\overline{p}_s^\ast$ for all $x\in \R^N$ due to Theorem~\ref{Theo.subcritical.imb}. Hence, up to a subsequence we have
\begin{equation}\label{P.T4.1.a.e.Convergence }
v_n(x) \ \to \ 0 \ \ \text{ for a.e. } \  x\in\R^N.
\end{equation}
Using \eqref{T.CCP1.weak-*.conv.nu}, \eqref{P.T4.1.weakConvergence}, \eqref{P.T4.1.a.e.Convergence } and arguing as in \cite{HKS}, we have
\begin{equation}\label{P.T4.1.weak*Convergence.bar-nu}
|v_n|^{q(x)}\ \overset{\ast }{\rightharpoonup } \ \nu-|u|^{q(x)}=:\overline{\nu} \ \ \text{ in } \  \ \mathcal{M}(\R^N).
\end{equation}
Obviously, $\curly{|v_n|^{\overline{p}}+\int_{\R^N}\frac{|v_n(x)-v_n(y)|^{p(x,y)}}{|x-y|^{N+sp(x,y)}}\,\diff y}$ is bounded in $L^1(\R^N)$. So up to a subsequence, we have
\begin{equation}\label{P.T4.1.weak*Convergence.bar-mu}
|v_n|^{\overline{p}}+\int_{\R^N}\frac{|v_n(x)-v_n(y)|^{p(x,y)}}{|x-y|^{N+sp(x,y)}}\,\diff y \ \overset{\ast }{\rightharpoonup } \ \overline{\mu} \ \ \text{ in } \ \  \mathcal{M}(\R^N)
\end{equation}
for some nonnegative finite Radon measure $\overline{\mu}$ on $\R^N$. Let $\phi\in C_c^{\infty}(\R^N)$ and let $R>2$ be such that
\begin{equation}\label{P.T4.1.chooseR}
\operatorname{supp}(\phi)\subset B_R \ \text{ and } \ d:=\operatorname{dist}\big(B_R^c,\operatorname{supp}(\phi)\big)\ge 1+\frac{R}{2}.
\end{equation}
By \eqref{S_q}, we have
\begin{equation}\label{P.T4.1.estNorm1}
S_q\norm{\phi v_n}_{L^{q(\cdot)}(\R^N)}\le \norm{\phi v_n}_{s,p}.
\end{equation}
Set $\overline{\nu}_n:=|v_n|^{q(x)},$ $\overline{\mu}_n:=|v_n(x)|^{\overline{p}}+\int_{\R^N}\frac{|v_n(x)-v_n(y)|^{p(x,y)}}{|x-y|^{N+sp(x,y)}}\,\diff y$, and $\la_n:=\|\phi v_n\|_{s,p}$. Let $\epsilon>0$ be arbitrary and fixed.  Then, there exists \ $C(\epsilon)\in (2,\infty)$ such that
\begin{equation}\label{P.T.CCP1.C(epsilon)}
|a+b|^{p(x,y)}\leq (1+\epsilon)|a|^{p(x,y)}+C(\epsilon)|b|^{p(x,y)},\ \  \forall a,b\in\mathbb{R},\ \forall x,y\in\mathbb{R}^N.
\end{equation}
Invoking Proposition~\ref{norm-modular2} and \eqref{P.T.CCP1.C(epsilon)} we have
\begin{align}\label{P.T4.1.estInt1}
1&=\int_{\R^N}\left|\frac{\phi v_n}{\lambda_n}\right|^{\overline{p}}\diff x+\int_{\R^N}\int_{\R^N}\frac{|(\phi v_n)(x)-(\phi v_n)(y)|^{p(x,y)}}{\la_n^{p(x,y)}|x-y|^{N+sp(x,y)}}\,\diff y\diff x\notag\\
&\le \int_{\R^N}\left|\frac{\phi v_n}{\lambda_n}\right|^{\overline{p}}\diff x+(1+\epsilon)\int_{\R^N}\int_{\R^N}\frac{|\phi(x)|^{p(x,y)}|v_n(x)- v_n(y)|^{p(x,y)}}{\la_n^{p(x,y)}|x-y|^{N+sp(x,y)}}\,\diff y\diff x\notag\\
&\quad +C(\epsilon)\int_{\R^N}\int_{\R^N}\frac{|v_n(y)|^{p(x,y)}|\phi(x)- \phi(y)|^{p(x,y)}}{\la_n^{p(x,y)}|x-y|^{N+sp(x,y)}}\,\diff y\diff x.
\end{align}
Set
$$
I_n:= \int_{\R^N}\int_{\R^N}\frac{|v_n(y)|^{p(x,y)}|\phi(x)- \phi(y)|^{p(x,y)}}{\la_n^{p(x,y)}|x-y|^{N+sp(x,y)}}\,\diff y\diff x.
$$
Then, invoking Proposition~\ref{norm-modular2} again we deduce from \eqref{P.T4.1.estInt1} that
\begin{equation}\label{P.T4.1.estInt2}
1\le \frac{(1+\epsilon)(\norm{\phi}_{\infty}^{\overline{p}}+1)}{\min\{\la_n^{\overline{p}},\la_n^{p^-}\}}\left(1+\|v_n\|_{s,p}^{\overline{p}}\right)+C(\epsilon)I_n.
\end{equation}
By the symmetry of $p$ we also have
$$
I_n=\int_{\R^N}\int_{\R^N}\frac{|v_n(x)|^{p(x,y)}|\phi(x)- \phi(y)|^{p(x,y)}}{\la_n^{p(x,y)}|x-y|^{N+sp(x,y)}}\,\diff y\diff x.
$$
Thus, by the facts that $\operatorname{supp}(\phi)\subset B_R$ and $\la_n^{p(x,y)}\geq \min\{\la_n^{\overline{p}},\la_n^{p^-}\}$ for all $x,y\in\R^N$,
\begin{align}\label{P.T4.1.estInt3}
I_n\le & \frac{1}{\min\{\la_n^{\overline{p}},\la_n^{p^-}\}}\Biggl[\int_{B_R^c}\left(|v_n(x)|^{p_*(x)}+|v_n(x)|^{\overline{p}}\right)
\int_{B_R}\frac{|\phi(x)- \phi(y)|^{p(x,y)}}{|x-y|^{N+sp(x,y)}}\,\diff y\diff x\notag\\
&+\int_{B_R}\left(|v_n(x)|^{p_*(x)}+|v_n(x)|^{\overline{p}}\right)
\int_{B_R^c}\frac{|\phi(x)- \phi(y)|^{p(x,y)}}{|x-y|^{N+sp(x,y)}}\,\diff y\diff x\notag\\
&+\int_{B_R}\left(|v_n(x)|^{p_*(x)}+|v_n(x)|^{\overline{p}}\right)
\int_{B_R}\frac{|\phi(x)- \phi(y)|^{p(x,y)}}{|x-y|^{N+sp(x,y)}}\,\diff y\diff x\Biggr].
\end{align}
We estimate each integral in the right-hand side of \eqref{P.T4.1.estInt3} as follows. Arguing as that obtained \eqref{(31_1)} we have
\begin{equation}\label{P.T4.1.bounded.v_n^t}
\max_{t\in \{\overline{p},p_*\}}\sup_{n\in \Bbb N}\int_{\R^N}|v_n(x)|^{t(x)}\, \diff x\le C_1.
\end{equation}
Here and in the rest of the proof, $C_i$ ($i\in \mathbb{N}$) denotes a positive constant independent of $n$ and $R$ while $C_i(R)$ ($i\in \mathbb{N}$) denotes a positive constant independent of $n$. 
Let $t\in \{\overline{p},p_*\}$. Using \eqref{P.T4.1.chooseR} and \eqref{P.T4.1.bounded.v_n^t}, we have
\begin{align}\label{P.T4.1.estInt4}
&\int_{B_R^c}|v_n(x)|^{t(x)}\round{
\int_{B_R}\frac{|\phi(x)- \phi(y)|^{p(x,y)}}{|x-y|^{N+sp(x,y)}}\,\diff y}\diff x\notag\\
&=\int_{B_R^c}|v_n(x)|^{t(x)}
\round{\int_{\operatorname{supp}(\phi)}\frac{|\phi(y)|^{p(x,y)}}{|x-y|^{N+sp(x,y)}}\,\diff y}\diff x\notag\\
&\le \left(1+\norm{\phi}_{\infty}^{\overline{p}}\right)\int_{B_R^c}|v_n(x)|^{t(x)}
\round{\int_{\operatorname{supp}(\phi)}\frac{\diff y}{(\frac{R}{2})^{N+sp(x,y)}}}\, \diff x\notag\\
&\le \frac{1+\norm{\phi}_{\infty}^{\overline{p}}}{(\frac{R}{2})^{N+sp^-}}|B_R|\int_{\R^N}|v_n(x)|^{t(x)}
\, \diff x \le \frac{C_2}{R^{sp^-}}.
\end{align}
Before estimating the remaining integrals, we note that by \eqref{P.T4.1.chooseR} again,
\begin{align}\label{P.T4.1.estInt5}
\int_{B_R^c}
&\frac{|\phi(x)- \phi(y)|^{p(x,y)}}{|x-y|^{N+sp(x,y)}}\,\diff y=\int_{B_R^c}
\frac{|\phi(x)|^{p(x,y)}}{|x-y|^{N+sp(x,y)}}\,\diff y \notag\\
&\le \left(1+\norm{\phi}_{\infty}^{\overline{p}}\right)\int_{\{|z|\ge 1\}}\frac{\diff z}{|z|^{N+sp^-}}=
 \left(1+\norm{\phi}_{\infty}^{\overline{p}}\right)\frac{N|B_1|}{sp^-},\ \ \forall x\in B_R.
\end{align}
Using \eqref{P.T4.1.estInt5}, we have
\begin{equation}\label{P.T4.1.estInt6}
\int_{B_R}|v_n(x)|^{t(x)}\round{\int_{B_R^c}
\frac{|\phi(x)- \phi(y)|^{p(x,y)}}{|x-y|^{N+sp(x,y)}}\,\diff y}\diff x \le C_3\int_{B_R}|v_n(x)|^{t(x)}\diff x.
\end{equation}
To estimate the last integral in the right-hand side of \eqref{P.T4.1.estInt3} we notice that for $x\in B_R,$
\begin{equation*}
\aligned
\int_{B_R}\frac{|\phi(x)- \phi(y)|^{p(x,y)}}{|x-y|^{N+sp(x,y)}}\,\diff y
&\le \left(1+\norm{\nabla\phi}_{\infty}^{\overline{p}}\right)\int_{B_R}\frac{\diff y}{|x-y|^{N+(s-1)p(x,y)}}\\
&\le \left(1+\norm{\nabla\phi}_{\infty}^{\overline{p}}\right)\int_{B_R}\Bigl(1+\frac{1}{|x-y|^{N+(s-1)p^-}}\Bigr)\diff y\\
&\le \left(1+\norm{\nabla\phi}_{\infty}^{\overline{p}}\right)\left[|B_R|+\int_{B_{2R}}\frac{\diff z}{|z|^{N+(s-1)p^-}}\right]\\
&= \left(1+\norm{\nabla\phi}_{\infty}^{\overline{p}}\right)\left[|B_R|+\frac{N|B_1|(2R)^{(1-s)p^-}}{(1-s)p^-}\right].
\endaligned
\end{equation*}
This together with \eqref{P.T4.1.estInt6} yields
$$
\int_{B_R} |v_n(x)|^{t(x)}\round{\int_{B_R}\frac{|\phi(x)- \phi(y)|^{p(x,y)}}{|x-y|^{N+sp(x,y)}}\,\diff y}\diff x\le C_4(R)\int_{B_R} |v_n(x)|^{t(x)}\diff x.
$$
Using this, \eqref{P.T4.1.estInt4} and \eqref{P.T4.1.estInt6}, we obtain from \eqref{P.T4.1.estInt3} that
\begin{equation}\label{(20)}
I_n\le \ \frac{C_5}{\min\{\la_n^{\overline{p}},\la_n^{p^-}\}}\left[\frac{1}{R^{sp^-}}+C_6(R)\sum_{t\in\{\overline{p}, p_\ast\}}\int_{B_R}|v_n(x)|^{t(x)}\diff x\right].
\end{equation}
Combining this with \eqref{P.T4.1.bounded.v_n^t} and the boundedness of $\{v_n\}$ in $W^{s,p(\cdot,\cdot)}(\R^N)$, we deduce from \eqref{P.T4.1.estInt2} that
$$
1\le \frac{C_7(R)}{\min\{\la_n^{\overline{p}},\la_n^{p^-}\}}
$$
and hence
$$
\la_n\le C_8(R), \ \  \forall n\in\mathbb{N}.
$$
Thus $\{\la_n\}$ is a bounded sequence in $\mathbb{R}$ and hence, up to a subsequence, we may assume that there exists $\la_*\in [0,\infty)$ such that
\begin{equation}\label{P.T4.1.lim-lambda-n}
\lim_{n\to \infty}\la_n=\la_*.
\end{equation}
Suppose that $\lambda_\ast>0$. From \eqref{P.T4.1.estInt1} and \eqref{(20)} we obtain
$$
\aligned
1\le &\ (1+\epsilon)\int_{\R^N}\left[\abs{\frac{\phi(x)}{\la_n}}^{p_*(x)}+\abs{\frac{\phi(x)}{\la_n}}^{\overline{p}}\right]\left(|v_n(x)|^{\overline{p}}+\int_{\R^N} \frac{|v_n(x)- v_n(y)|^{p(x,y)}}{|x-y|^{N+sp(x,y)}}\,\diff y\right)\diff x\\
&+\frac{C_5}{\min\{\la_n^{\overline{p}},\la_n^{p^-}\}}\left[\frac{1}{R^{sp^-}}+C_6(R)\sum_{t\in\{\overline{p}, p_\ast\}}\int_{B_R}|v_n(x)|^{t(x)}\diff x\right].
\endaligned
$$
Letting $n\to \infty$ in the last inequality, noticing \eqref{P.T4.1.weak*Convergence.bar-mu}, \eqref{P.T4.1.lim-lambda-n} and $\lim_{n\to \infty}\int_{B_R}|v_n(x)|^{t(x)}\diff x=0$ for $t\in\{\overline{p},p_\ast\}$ (see \eqref{P.T4.1.locConvergence }), we obtain
$$
1\le (1+\epsilon)\int_{\R^N}\left[\abs{\frac{\phi}{\la_*}}^{p_*(x)}+\abs{\frac{\phi}{\la_*}}^{\overline{p}}\right]\diff\overline{\mu}+
\frac{C_5}{\min\{\la_*^{\overline{p}},\la_*^{p^-}\}R^{sp^-}}.
$$
Letting $R\to\infty$ and then letting $\epsilon\to 0^+$, we deduce from the last inequality that
$$
1\le\int_{\R^N}\left[\abs{\frac{\phi}{\la_*}}^{p_*(x)}+\abs{\frac{\phi}{\la_*}}^{\overline{p}}\right]\diff\overline{\mu}.
$$
Invoking Proposition~\ref{norm-modular}, we easily obtain from the last estimate that
$$
\la_*\le 2^{\frac{1}{p^-}}\max\left\{\norm{\phi}_{L^{p_*(\cdot)}_{\overline{\mu}}(\R^N)}, \norm{\phi}_{L^{\overline{p}}_{\overline{\mu}}(\R^N)}\right\}.
$$
From \eqref{P.T4.1.weak*Convergence.bar-nu}, \eqref{P.T4.1.estNorm1}, \eqref{P.T4.1.lim-lambda-n} and the last inequality, we arrive at
\begin{equation}\label{P.T4.1.mu.bar-lambda.bar}
S_q\norm{\phi}_{L^{q(\cdot)}_{\overline{\nu}}(\R^N)}\le 2^{\frac{1}{p^-}}\max\left\{\norm{\phi}_{L^{p_*(\cdot)}_{\overline{\mu}}(\R^N)}, \norm{\phi}_{L^{\overline{p}}_{\overline{\mu}}(\R^N)}\right\}.
\end{equation}
If $\lambda_\ast=0$ then by \eqref{P.T4.1.weak*Convergence.bar-nu} and \eqref{P.T4.1.estNorm1}, we get $\norm{\phi}_{L^{q(\cdot)}_{\overline{\nu}}(\R^N)}=0$; hence, \eqref{P.T4.1.mu.bar-lambda.bar} also holds. That is, \eqref{P.T4.1.mu.bar-lambda.bar} holds for any $\phi\in C_c^{\infty}(\R^N)$ and hence, \eqref{T.CCP1.form-nu} follows by invoking Lemma~\ref{Reserverd.Holder} and the definition of $\overline{\nu}$ (see \eqref{P.T4.1.weak*Convergence.bar-nu}).

The fact that $\{x_i\}_{i\in I}\subset \mathcal{C}$ can be obtained by an argument similar to that of \cite[Theorem 3.3]{HKS} and we omit the proof. Next, we obtain the relation \eqref{T.CCP1.mu-nu}. Let $i\in I$ and for $\rho>0$, define $\psi_{\rho}$ as in Lemma~\ref{aux.le.1} with $x_0$ replaced by $x_i$. Thus $\psi_\rho\in C^{\infty}(\R^N)$, $0\le \psi_{\rho}\le 1$, $\psi_{\rho}\equiv 1$ on $B_{\rho}(x_i)$, $\operatorname{supp}(\psi_{\rho})\subset B_{2{\rho}}(x_i)$. 
Using \eqref{S_q} again, we have
$$
S_q\norm{\psi_{\rho}u_n}_{L^{q(\cdot)}(\R^N)}\le \|\psi_{\rho}u_n\|_{s,p}.
$$
Taking the limit inferior as $n\to \infty$ in the above inequality and using \eqref{T.CCP1.weak-*.conv.nu} we obtain
\begin{equation}\label{P.T.CCP1.est.phi_rho0}
S_q\norm{\psi_{\rho}}_{L_{\nu}^{q(\cdot)}(B_{2\rho}(x_i))}\le \liminf_{n\to \infty}\, \|\psi_{\rho}u_n\|_{s,p}.
\end{equation}
Hence,
\begin{equation}\label{P.T.CCP1.est.phi_rho1}
S_q\limsup_{\rho\to 0^+}\,\norm{\psi_{\rho}}_{L_{\nu}^{q(\cdot)}(B_{2\rho}(x_i))}\le \limsup_{\rho\to 0^+}\,\liminf_{n\to \infty}\,\|\psi_{\rho}u_n\|_{s,p}.
\end{equation}
Invoking Proposition~\ref{norm-modular}, we have
\begin{align}\label{P.T.CCP1.est.phi_rho1'}
\norm{\psi_{\rho}}_{L_{\nu}^{q(\cdot)}(B_{2\rho}(x_i))}&\ge \min\left\{\Bigl(\int_{B_{2\rho}(x_i)} |\psi_{\rho}|^{q(x)}\diff\nu\Bigr)^{\frac{1}{q_{i,\rho}^+}},
\Bigl(\int_{B_{2\rho}(x_i)} |\psi_{\rho}|^{q(x)}\diff\nu\Bigr)^{\frac{1}{q_{i,\rho}^-}}\right\}\notag\\
&\ge \min\left\{\nu(B_{\rho}(x_i))^{\frac{1}{q_{i,\rho}^+}}, \nu(B_{\rho}(x_i))^{\frac{1}{q_{i,\rho}^-}}\right\},
\end{align}
where $q_{i,\rho}^+:=\underset{x\in \overline{B_{\rho}(x_i)}}{\max}\, q(x)$, $q_{i,\rho}^-:=\underset{x\in \overline{B_{\rho}(x_i)}}{\min}\, q(x)$. Thus, we obtain a lower bound of the left-hand side of \eqref{P.T.CCP1.est.phi_rho1} as follows:
\begin{equation}\label{P.T.CCP1.est.nu-phi_rho}
\limsup_{\rho\to 0^+}\norm{\psi_{\rho}}_{L_{\nu}^{q(\cdot)}(B_{2\rho}(x_i))}\ge \nu_i^{\frac{1}{q(x_i)}}= \nu_i^{\frac{1}{\overline{p}_s^\ast}}\
\end{equation}
due to the continuity of $q$ and the fact that $x_i\in\mathcal{C}$. To obtain an upper bound of the right-hand side of \eqref{P.T.CCP1.est.phi_rho1}, we first prove that there exist $\rho_0\in (0,1)$ and $\lambda_0\in (0,\infty)$ such that
\begin{equation}\label{P.T.CCP1.est.phi_rho2}
0<\frac{S_q}{2}\nu_i^{\frac{1}{q(x_i)}}\leq \liminf_{n \to \infty}\, \la_{n,\rho}=:\la_{*,\rho}\leq\lambda_0\ \ \text{for any}\  \rho\in (0,\rho_0),
\end{equation}
where $\la_{n,\rho}:=\|\psi_{\rho}u_n\|_{s,p}.$

\noindent Indeed, by the continuity of $q$ and the positiveness of $\nu_i$, we can choose $\rho_0\in (0,1)$ such that
\begin{equation}\label{P.T.CCP1.est.rho_0}
S_q \min\left\{\nu_i^{\frac{1}{q_{i,\rho}^+}}, \nu_i^{\frac{1}{q_{i,\rho}^-}}\right\}>\frac{S_q}{2} \nu_i^{\frac{1}{q(x_i)}} ,\ \ \forall \rho\in (0,\rho_0)
\end{equation}
From \eqref{P.T.CCP1.est.phi_rho0}, \eqref{P.T.CCP1.est.phi_rho1'} and \eqref{P.T.CCP1.est.rho_0}, we infer $\frac{S_q}{2}\nu_i^{\frac{1}{q(x_i)}}\leq \la_{*,\rho}$ for all $\rho\in (0,\rho_0).$ On the other hand, by choosing $\rho_0$ smaller if necessary we have
\begin{equation}\label{P.T.CCP1.est.rho_0'}
 \limsup_{n\to\infty}\int_{\R^{N}}\int_{\R^{N}}|u_n(y)|^{p(x,y)}\frac{|\psi_{\rho}(x)- \psi_{\rho}(y)|^{p(x,y)}}{|x-y|^{N+sp(x,y)}}\,\diff y\diff x<1,\ \ \forall \rho\in (0,\rho_0)
\end{equation}
in view of Lemma~\ref{aux.le.1}. Note that
\begin{align}\label{P.T.CCP1.est.rho_0''}
\int_{\mathbb{R}^N}&|\psi_\rho u_n|^{\overline{p}}\diff x+\int_{\R^{N}}\int_{\R^{N}}\frac{|(\psi_{\rho}u_n)(x)- (\psi_{\rho}u_n)(y)|^{p(x,y)}}{|x-y|^{N+sp(x,y)}}\,\diff y\diff x\notag\\
&\leq\int_{\mathbb{R}^N}|\psi_\rho u_n|^{\overline{p}}\diff x+2^{\overline{p}-1}\int_{\R^{N}}\int_{\R^{N}}|\psi_{\rho}(x)|^{p(x,y)}\frac{|u_n(x)- u_n(y)|^{p(x,y)}}{|x-y|^{N+sp(x,y)}}\,\diff y\diff x\notag\\
&\quad\quad+2^{\overline{p}-1}\int_{\R^{N}}\int_{\R^{N}}|u_n(y)|^{p(x,y)}\frac{|\psi_{\rho}(x)- \psi_{\rho}(y)|^{p(x,y)}}{|x-y|^{N+sp(x,y)}}\,\diff y\diff x.
\end{align}
Using \eqref{P.T.CCP1.est.rho_0'}, \eqref{P.T.CCP1.est.rho_0''}, the boundedness of $\{u_n\}$ in $W^{s,p(\cdot,\cdot)}(\R^N)$ and invoking Proposition~\ref{norm-modular2}, we can easily show that there exists $\lambda_0\in (0,\infty)$ such that $\la_{n,\rho}<\lambda_0$ for all $n\in\N$ and $\rho\in (0,\rho_0)$. Thus, \eqref{P.T.CCP1.est.phi_rho2} has been proved.

\noindent Next, let $\epsilon>0$ be arbitrary and fixed. We have
$$
\aligned
1&=\int_{\R^N}\left|\frac{\psi_{\rho}u_n}{\la_{n,\rho}}\right|^{\overline{p}}\diff x+\int_{\R^N}\int_{\R^N}\frac{|(\psi_{\rho}u_n)(x)- (\psi_{\rho}u_n)(y)|^{p(x,y)}}{\la_{n,\rho}^{p(x,y)}|x-y|^{N+sp(x,y)}}\,\diff y\diff x\\
&=\int_{\R^N}\left|\frac{\psi_{\rho}u_n}{\la_{n,\rho}}\right|^{\overline{p}}\diff x+2\int_{B_{2\rho}(x_i)}\int_{\R^{N}\setminus B_{2\rho}(x_i)}\frac{|(\psi_{\rho}u_n)(x)- (\psi_{\rho}u_n)(y)|^{p(x,y)}}{\la_{n,\rho}^{p(x,y)}|x-y|^{N+sp(x,y)}}\,\diff y\diff x\\
& \quad +\int_{B_{2\rho}(x_i)}\int_{B_{2\rho}(x_i)}\frac{|(\psi_{\rho}u_n)(x)- (\psi_{\rho}u_n)(y)|^{p(x,y)}}{\la_{n,\rho}^{p(x,y)}|x-y|^{N+sp(x,y)}}\,\diff y\diff x.
\endaligned
$$
Hence, by utilizing \eqref{P.T.CCP1.C(epsilon)} again we have
$$
\aligned
1\le& \, \int_{\R^N}\left|\frac{\psi_{\rho}u_n}{\la_{n,\rho}}\right|^{\overline{p}}\diff x+2\int_{B_{2\rho}(x_i)}\int_{\R^{N}\setminus B_{2\rho}(x_i)}\abs{\frac{u_n(x)}{\la_{n,\rho}}}^{p(x,y)}\frac{|\psi_{\rho}(x)- \psi_{\rho}(y)|^{p(x,y)}}{|x-y|^{N+sp(x,y)}}\,\diff y\diff x\\
& +(1+\epsilon)\int_{B_{2\rho}(x_i)}\int_{B_{2\rho}(x_i)}|\psi_{\rho}(x)|^{p(x,y)}\frac{|u_n(x)- u_n(y)|^{p(x,y)}}{\la_{n,\rho}^{p(x,y)}|x-y|^{N+sp(x,y)}}\,\diff y\diff x\\
& +C(\epsilon)\int_{B_{2\rho}(x_i)}\int_{B_{2\rho}(x_i)}\abs{\frac{u_n(y)}{\la_{n,\rho}}}^{p(x,y)}\frac{|\psi_{\rho}(x)- \psi_{\rho}(y)|^{p(x,y)}}{|x-y|^{N+sp(x,y)}}\,\diff y\diff x.
\endaligned
$$
Combining this with the fact that $0\leq\psi_\rho\leq 1$ yields
\begin{equation}\label{(35)}
\aligned
1\le & \frac{C(\epsilon)}{\min\{\la_{n,\rho}^{\overline{p}},\la_{n,\rho}^{p_i^-}\}}\int_{\R^N}\int_{\R^{N}}\abs{u_n(x)}^{p(x,y)}\frac{|\psi_{\rho}(x)- \psi_{\rho}(y)|^{p(x,y)}}{|x-y|^{N+sp(x,y)}}\,\diff y\diff x\\
& +\frac{1+\epsilon}{\min\{\la_{n,\rho}^{\overline{p}},\la_{n,\rho}^{p_i^-}\}}\int_{\R^N}\psi_{\rho}(x)U_n(x)\diff x,
\endaligned
\end{equation}
where $p_i^-=\inf_{(x,y)\in B_{2\rho}(x_i)\times B_{2\rho}(x_i)}p(x,y)$.
Here and in what follows, for brevity we denote
\begin{equation}\label{Un}
U_n(x):=|u_n(x)|^{\overline{p}}+\int_{\R^N}\frac{|u_n(x)-u_n(y)|^{p(x,y)}}{|x-y|^{N+sp(x,y)}}\,\diff y,\ \ \forall x\in\mathbb{R}^N, \ \forall n\in\N.
\end{equation}
Using \eqref{P.T.CCP1.est.phi_rho2}, we deduce from \eqref{(35)} that
\begin{equation*}
\aligned
1\le& \frac{C(\epsilon)}{\min\big\{\la_{*,\rho}^{\overline{p}},\la_{*,\rho}^{p_i^-}\big\}}\limsup_{n\to \infty}\int_{\R^N}\int_{\R^{N}}\abs{u_n(x)}^{p(x,y)}\frac{|\psi_{\rho}(x)- \psi_{\rho}(y)|^{p(x,y)}}{|x-y|^{N+sp(x,y)}}\,\diff y\diff x\\
&  +\frac{1+\epsilon}{\min\big\{\la_{*,\rho}^{\overline{p}},\la_{*,\rho}^{p_i^-}\big\}}\int_{\R^N}\psi_{\rho}\,\diff\mu,\ \ \forall \rho\in (0,\rho_0).
\endaligned
\end{equation*}
Hence,
\begin{equation*}
\min\big\{\la_{*,\rho}^{\overline{p}},\la_{*,\rho}^{p_i^-}\big\}\le C(\epsilon)\limsup_{n\to \infty}\int_{\R^{2N}}\abs{u_n(x)}^{p(x,y)}\frac{|\psi_{\rho}(x)- \psi_{\rho}(y)|^{p(x,y)}}{|x-y|^{N+sp(x,y)}}\,\diff y\diff x+(1+\epsilon)\int_{\R^N}\psi_{\rho}\,\diff\mu.
\end{equation*}
Taking limit superior as $\rho\to 0^+$ in the last inequality and invoking Lemma~\ref{aux.le.1}, we obtain
$$
\la_{*}^{\overline{p}}\le (1+\epsilon) \mu_i\ \  \text{i.e.,}\ \  \la_{*}\le (1+\epsilon)^{\frac{1}{\overline{p}}}\mu_i^{\frac{1}{\overline{p}}},
$$
where $\la_{*}:=\underset{\rho\to 0^+}{\limsup}\, \la_{*,\rho}$ and $\mu_i:=\underset{\rho\to 0^+}{\lim}\, \mu(B_{2\rho}(x_i))$.
Combining this with \eqref{P.T.CCP1.est.phi_rho1} and \eqref{P.T.CCP1.est.nu-phi_rho} together with the fact that $\epsilon$ was chosen arbitrarily we obtain \eqref{T.CCP1.mu-nu}. Hence, $\{x_i\}_{i\in I}$ are also atoms of $\mu$.

Finally, to obtain \eqref{T.CCP1.form-mu} we note that for each $\phi\in C_0(\R^N)$, $\phi\ge 0$, the functional
$$
u\mapsto \int_{\R^N}\phi(x)\left[|u(x)|^{\overline{p}}+\int_{\R^{N}}\frac{|u(x)- u(y)|^{p(x,y)}}{|x-y|^{N+sp(x,y)}}\,\diff y
\right]\,\diff x
$$
is convex and differentiable on $W^{s,p(\cdot,\cdot)}(\R^N)$. From this and \eqref{T.CCP1.weak-*.conv.mu} we infer
$$
\aligned
&\int_{\R^N}\phi(x)\left[|u(x)|^{\overline{p}}+\int_{\R^{N}}\frac{|u(x)- u(y)|^{p(x,y)}}{|x-y|^{N+sp(x,y)}}\,\diff y
\right]\,\diff x\\
& \ \ \le \liminf_{n\to \infty}\int_{\R^N}\phi(x)\left[|u_n(x)|^{\overline{p}}+\int_{\R^{N}}\frac{|u_n(x)- u_n(y)|^{p(x,y)}}{|x-y|^{N+sp(x,y)}}\,\diff y
\right]\,\diff x=\int_{\R^N}\phi\, \diff\mu.
\endaligned
$$
Thus,
$$
\mu \ge |u|^{\overline{p}}+\int_{\R^N}\frac{|u(x)- u(y)|^{p(x,y)}}{|x-y|^{N+sp(x,y)}}\,\diff y
.
$$
Extracting $\mu$ to its atoms, we get \eqref{T.CCP1.form-mu} and the proof is complete.
\end{proof}
We conclude this section by proving Theorem~\ref{CCP2}.
\begin{proof}[Proof of Theorem~\ref{CCP2}]
For each $R>0$, define $\phi_R$ as in Lemma~\ref{aux.le.2}. Thus $\phi_R\in C_c^{\infty}(\R^N)$, $0\le \phi_R\le 1$,
$\phi_R\equiv 0$ on $B_R$ and $\phi_R\equiv 1$ on $B_{2R}^c$, and
$\norm{\nabla \phi_R}_{\infty}\le \frac{2}{R}.$
In order to obtain \eqref{T.CCP2.mu_infty}, we decompose
\begin{equation}\label{(12_1)}
\int_{\R^N}U_n(x)\,\diff x=\int_{\R^N}\phi_R(x)U_n(x)\,\diff x+\int_{\R^N}(1-\phi_{R}(x))U_n(x)\,\diff x,
\end{equation}
where $U_{n}$ is given by \eqref{Un}. By \eqref{T.CCP2.mu_infty.def} and the fact that
$$
\int_{B_{2R}^c}U_n(x)\,\diff x\le \int_{\R^N}\phi_R(x)U_n(x)\,\diff x\le \int_{B_R^c}U_n(x)\,\diff x
$$
for all $n\in \Bbb N$ and $R>0$, we obtain
\begin{equation}\label{(13_1)}
\mu_{\infty}=\lim_{R\to\infty}\limsup_{n\to\infty}\int_{\R^N}\phi_{R}(x)U_n(x)\, \diff x.
\end{equation}
On the other hand, 
the fact that $1-\phi_R\in C_c^{\infty}(\R^N)$ gives
\begin{equation}\label{(14_1)}
\lim_{n\to \infty}\int_{\R^N}(1-\phi_{R}(x))U_n(x)\,\diff x=\int_{\R^N}(1-\phi_{R}(x))\,\diff\mu.
\end{equation}
Meanwhile,
$$
\lim_{R\to \infty}\int_{\R^N}\phi_{R}(x)\,\diff\mu=0
$$
in view of the Lebesgue dominated convergence theorem. From the last two equalities, we obtain
$$
\lim_{R\to \infty}\lim_{n\to \infty}\int_{\R^N}(1-\phi_{R}(x))U_n(x)\,\diff x=\mu(\R^N).
$$
From this and \eqref{(12_1)}-\eqref{(14_1)} we obtain \eqref{T.CCP2.mu_infty}.

\noindent In order to prove \eqref{T.CCP2.nu_infty}, we decompose
\begin{equation}\label{(15_1)}
\int_{\R^N}|u_n(x)|^{q(x)}\,\diff x=\int_{\R^N}\phi_R^{q(x)}|u_n(x)|^{q(x)}\,\diff x+\int_{\R^N}\left(1-\phi_R^{q(x)}\right)|u_n(x)|^{q(x)}\,\diff x.
\end{equation}
From the definition \eqref{T.CCP2.nu_infty.def} of $\nu_\infty$ and the estimate
$$
\int_{B_{2R}^c}|u_n(x)|^{q(x)}\,\diff x\le \int_{\R^N}\phi_R^{q(x)}|u_n(x)|^{q(x)}\,\diff x\le\int_{B_{R}^c}|u_n(x)|^{q(x)}\,\diff x
$$
for all $n\in \Bbb N$ and $R>0$, we deduce
\begin{equation}\label{(16_1)}
\nu_{\infty}=\lim_{R\to\infty}\limsup_{n\to\infty}\int_{\R^N}\phi_{R}^{q(x)}(x)|u_n(x)|^{q(x)}\, \diff x.
\end{equation}
Arguing as that obtained \eqref{T.CCP2.mu_infty} above for which $\phi_R$ is replaced with $\phi_{R}^{q(x)}$, we obtain \eqref{T.CCP2.nu_infty}.

We conclude the proof by proving \eqref{T.CCP2.mu-nu_infty}. Without loss of generality we assume $\nu_\infty>0.$ Let $\e\in (0,1)$ be arbitrary and fixed. By $(\mathcal{E}_\infty)$, we can choose $R_1>1$ such that
\begin{equation}\label{(18_1)}
|p(x,y)-\overline{p}|<\e \ \ \text{and}\ \ |q(x)-q_{\infty}|<\e\ \text{for all} \ |x|, |y|>R_1.
\end{equation}
From  \eqref{S_q} and \eqref{equivalent.norms}, we have
\begin{equation}\label{(19_1)}
S_q{\norm{\phi_R u_n}_{L^{q(\cdot)}(\R^N)}}\le \norm{\phi_R u_n}_{s,p}.
\end{equation}
For $R>R_1$, using \eqref{(18_1)} and Proposition~\ref{norm-modular} we have
$$
\aligned
\norm{\phi_R u_n}_{L^{q(\cdot)}(\R^N)}&=\norm{\phi_R u_n}_{L^{q(\cdot)}(B_R^c)}\\
&\ge\min\left\{\Bigl(\int_{B_R^c}\phi_R^{q(x)}|u_n|^{q(x)}\,\diff x\Bigr)^{\frac{1}{q_{\infty}+\e}},
\Bigl(\int_{B_R^c}\phi_R^{q(x)}|u_n|^{q(x)}\,\diff x\Bigr)^{\frac{1}{q_{\infty}-\e}}\right\}\\
&\ge\min\left\{\Bigl(\int_{B_{2R}^c}|u_n|^{q(x)}\,\diff x\Bigr)^{\frac{1}{q_{\infty}+\e}},
\Bigl(\int_{B_{2R}^c}|u_n|^{q(x)}\,\diff x\Bigr)^{\frac{1}{q_{\infty}-\e}}\right\}.
\endaligned
$$
Thus,
\begin{equation}\label{(20_1)}
\liminf_{R\to\infty}\, \limsup_{n\to\infty}\, \norm{\phi_R u_n}_{L^{q(\cdot)}(\R^N)}\ge \min\left\{\nu_{\infty}^{\frac{1}{q_{\infty}+\e}}, \nu_{\infty}^{\frac{1}{q_{\infty}-\e}}\right\}.
\end{equation}
Next, we estimate the right-hand side of \eqref{(19_1)}. To this end, denote $\sigma_{n,R}:=\|\phi_R u_n\|_{s,p}$ for brevity. We will show that there exist $R_2\in (R_1,\infty)$ and $\sigma\in (0,\infty)$ such that
\begin{equation}\label{(21_1)}
0<S_q\Bigl(\frac{1}{4}\nu_{\infty}\Bigr)^{\frac{1}{q_{\infty}}}\leq\sigma_{\ast,R}:=\limsup_{n\to\infty}\, \sigma_{n,R}<\sigma, \ \ \forall R\in (R_2,\infty).
\end{equation}
Indeed, we first choose $\ol{\e}>0$ sufficiently small such that
\begin{equation}\label{(22_1)}
\min\Bigl\{\Bigl(\frac{\nu_{\infty}}{2}\Bigr)^{\frac{1}{q_{\infty}+\ol{\e}}}, \Bigl(\frac{\nu_{\infty}}{2}\Bigr)^{\frac{1}{q_{\infty}-\ol{\e}}}\Bigr\}>\bigl(\frac{\nu_{\infty}}{4}\Bigr)^{\frac{1}{q_{\infty}}}.
\end{equation}
Then we can find $\ol{R}_2>R_1$ such that
\begin{equation}\label{(23_1)}
\norm{\phi_R u_n}_{L^{q(\cdot)}(\R^N)}\ge\min\Bigl\{\Bigl(\int_{B_{R}^c}\phi_R^{q(x)}|u_n|^{q(x)}\,\diff x\Bigr)^{\frac{1}{q_{\infty}+\ol{\e}}},
\Bigl(\int_{B_{R}^c}\phi_R^{q(x)}|u_n|^{q(x)}\,\diff x\Bigr)^{\frac{1}{q_{\infty}-\ol{\e}}}\Bigr\}
\end{equation}
for all $R>\ol{R}_2$. Finally, by \eqref{(16_1)}, we can find $R_2>\ol{R}_2$ such that
\begin{equation}\label{(24_1)}
\limsup_{n\to\infty}\int_{\R^N}\phi_R^{q(x)}|u_n|^{q(x)}\,\diff x=\limsup_{n\to\infty}\int_{B_R^c}\phi_R^{q(x)}|u_n|^{q(x)}\,\diff x>\frac{\nu_{\infty}}{2}
\end{equation}
for all $R>\ol{R}_2$. From \eqref{(23_1)} and \eqref{(24_1)} we get
$$
\limsup_{n\to\infty}\,\norm{\phi_R u_n}_{L^{q(\cdot)}(\R^N)}\ge\min\Bigl\{\Bigl(\frac{\nu_{\infty}}{2}\Bigr)^{\frac{1}{q_{\infty}+\ol{\e}}},
\Bigl(\frac{\nu_{\infty}}{2}\Bigr)^{\frac{1}{q_{\infty}-\ol{\e}}}\Bigr\}
$$
and hence, by \eqref{(22_1)},
$$
\limsup_{n\to\infty}\,\norm{\phi_R u_n}_{L^{q(\cdot)}(\R^N)}\ge \Bigl(\frac{\nu_{\infty}}{4}\Bigr)^{\frac{1}{q_{\infty}}}
$$
for all $R>R_2$. This and \eqref{(19_1)} yield $S_q\Bigl(\frac{1}{4}\nu_{\infty}\Bigr)^{\frac{1}{q_{\infty}}}\leq\sigma_{\ast,R}$ for all $R\in (R_2,\infty)$.  By a similar argument to that obtained \eqref{P.T.CCP1.est.phi_rho2}, invoking Lemma~\ref{aux.le.2} and choosing $R_2$ larger if necessary, we can show that there exists $\sigma\in (0,\infty)$ such that $\sigma_{\ast,R}<\sigma$ for all $R\in (R_2,\infty)$. Thus, \eqref{(21_1)} has been proved. 

\noindent We now turn to estimate the right-hand side of \eqref{(19_1)}. For each $R>R_2$ given, let $n_k=n_k(R)$ $(k=1,2,\cdots)$
 be a sequence such that
 \begin{equation}\label{(25_1)}
\lim_{k\to\infty}\sigma_{n_k,R}=\limsup_{n\to\infty}\, \sigma_{n,R}=\sigma_{\ast,R}.
\end{equation}
Utilizing Proposition~\ref{norm-modular2} and \eqref{P.T.CCP1.C(epsilon)} again, we have
$$
\aligned
1=&\int_{\R^{N}}\frac{|\phi_R(x)u_{n_k}(x)|^{\overline{p}}}{\sigma_{n_k,R}^{\overline{p}}}\, \diff x+\int_{\R^{N}}\int_{\R^{N}}\frac{|(\phi_{R}u_{n_k})(x)- (\phi_{R}u_{n_k})(y)|^{p(x,y)}}{\sigma_{n_k,R}^{p(x,y)}|x-y|^{N+sp(x,y)}}\,\diff x\diff y\\
=&\int_{\R^{N}}\frac{|\phi_R(x)|^{\overline{p}}|u_{n_k}(x)|^{\overline{p}}}{\sigma_{n_k,R}^{\overline{p}}}\, \diff x+2\int_{B_R^c}\int_{B_R}\frac{|\phi_{R}(x)|^{p(x,y)}|u_{n_k}(x)|^{p(x,y)}}{\sigma_{n_k,R}^{p(x,y)}|x-y|^{N+sp(x,y)}}\,\diff y\diff x\\
&+\int_{B_R^c}\int_{B_R^c}\frac{|(\phi_{R}u_{n_k})(x)- (\phi_{R}u_{n_k})(y)|^{p(x,y)}}{\sigma_{n_k}^{p(x,y)}|x-y|^{N+sp(x,y)}}\,\diff y\diff x\\
\le&\int_{B_R^c}\frac{|\phi_R(x)|^{\overline{p}}|u_{n_k}(x)|^{\overline{p}}}{\sigma_{n_k,R}^{\overline{p}}}\, \diff x+2\int_{B_R^c}\int_{B_R}\frac{|u_{n_k}(x)|^{p(x,y)}}{\sigma_{n_k,R}^{p(x,y)}}\frac{|\phi_{R}(x)-\phi_{R}(y)|^{p(x,y)}}{|x-y|^{N+sp(x,y)}}\,\diff y\diff x\\
&+C(\epsilon)\int_{B_R^c}\int_{B_R^c}\frac{|u_{n_k}(x)|^{p(x,y)}}{\sigma_{n_k,R}^{p(x,y)}}\frac{|\phi_{R}(x)-\phi_{R}(y)|^{p(x,y)}}{|x-y|^{N+sp(x,y)}}\,\diff y\diff x\\
&+(1+\epsilon)\int_{B_R^c}\int_{B_R^c}\frac{|\phi_{R}(y)|^{p(x,y)}}{\sigma_{n_k,R}^{p(x,y)}}\frac{|u_{n_k}(x)-u_{n_k}(y)|^{p(x,y)}}{|x-y|^{N+sp(x,y)}}\,\diff x\diff y.
\endaligned
$$
This and the fact that $0\leq\phi_R\leq 1$ yield
$$
\aligned
1& \le \frac{C(\epsilon)}{\min\{\sigma_{n_k,R}^{\overline{p}},\sigma_{n_k,R}^{p^-}\}}\int_{B_R^c}\int_{\R^N}|u_{n_k}(x)|^{p(x,y)}\frac{|\phi_{R}(x)-\phi_{R}(y)|^{p(x,y)}}{|x-y|^{N+sp(x,y)}}\,\diff y\diff x\\
&\qquad+\frac{1+\epsilon}{\min\{\sigma_{n_k,R}^{\overline{p}+\epsilon},\sigma_{n_k,R}^{\overline{p}-\epsilon}\}}\int_{B_R^c}\phi_R(x)U_{n_k}(x)\, \diff x.
\endaligned
$$
Taking limit superior as $k\to\infty$ in the last inequality with noticing \eqref{(21_1)} and  \eqref{(25_1)} we obtain
\begin{align}\label{(26_1)}
1\le \frac{C(\epsilon)}{\min\{\sigma_{\ast,R}^{\overline{p}},\sigma_{\ast,R}^{p^-}\}}&\limsup_{n\to \infty}\int_{B_R^c}\int_{\R^N}|u_{n}(x)|^{p(x,y)}\frac{|\phi_{R}(x)-\phi_{R}(y)|^{p(x,y)}}{|x-y|^{N+sp(x,y)}}\,\diff y\diff x\notag\\
&+\frac{1+\epsilon}{\min\{\sigma_{\ast,R}^{\overline{p}+\epsilon},\sigma_{\ast,R}^{\overline{p}-\epsilon}\}}\limsup_{n\to \infty}\int_{B_R^c}\phi_R(x)U_{n}(x)\, \diff x.
\end{align}
Now, taking the limit as $R\to\infty$ in \eqref{(26_1)} with taking Lemma~\ref{aux.le.2} and \eqref{(13_1)} into account, we deduce
\begin{align*}
1\le \frac{1+\epsilon}{\min\{\sigma_{\ast}^{\overline{p}+\epsilon},\sigma_{\ast}^{\overline{p}-\epsilon}\}}\mu_{\infty},\ \text{i.e.,}\ \sigma_{*}\leq (1+\epsilon)^{\frac{1}{\overline{p}-\epsilon}}\max\left\{\mu_{\infty}^{\frac{1}{\overline{p}+\epsilon}}\,\mu_{\infty}^{\frac{1}{\overline{p}+\epsilon}}\right\},
\end{align*}
where $\sigma_{*}:=\underset{R\to \infty}{\liminf}\, \sigma_{\ast,R}$ and hence, $0<\sigma_*<\sigma$ due to \eqref{(21_1)}. 
From this, \eqref{(19_1)} and \eqref{(20_1)} we obtain
$$S_q\min\left\{\nu_{\infty}^{\frac{1}{q_{\infty}+\e}}, \nu_{\infty}^{\frac{1}{q_{\infty}-\e}}\right\}\leq (1+\epsilon)^{\frac{1}{\overline{p}-\epsilon}}\max\left\{\mu_{\infty}^{\frac{1}{\overline{p}+\epsilon}},\,\mu_{\infty}^{\frac{1}{\overline{p}-\epsilon}}\right\}.$$
Since $\epsilon$ was chosen arbitrarily in the last inequality, \eqref{T.CCP2.mu-nu_infty} follows. The proof of Theorem \ref{CCP2} is complete.

\end{proof}

\section{Application}\label{Application}

\subsection{The existence of solutions}
In this section, we investigate the existence and multiplicity of solutions to the following problem
\begin{eqnarray}\label{Eq}
\begin{cases}
\mathcal{L}u+|u|^{p(x)-2}u=f(x,u)+\lambda |u|^{q(x)-2}u  &\text{in}~ \mathbb{R}^N ,\\
u\in W^{s,p(\cdot,\cdot)}(\R^N),
\end{cases}
\end{eqnarray}
where $s,p,q$ satisfy $(\mathcal{P}_2)$, $(\mathcal{Q}_2)$ and $(\mathcal{E}_\infty)$ with $p^+<q^-$, the operator $\mathcal{L}$ is defined as in \eqref{L}, $\lambda$ is a real parameter, and the nonlinear term $f$ satisfies the following assumptions.
\begin{itemize}
	\item[$(\mathcal{F}1)$]  $f:\mathbb{R}^N\times \Bbb R\to \Bbb R$ is a
	Carath\'eodory function such that $f$ is odd with respect to the second variable.
	\item[$(\mathcal{F}2)$]  There exist functions $r_j, a_j$ with $r_j \in C_+(\mathbb{R}^N)$, $\underset{x\in\mathbb{R}^N}{\inf}[q(x)-r_j(x)]>0,$  $a_j\in L_+^{\frac{q(\cdot)}{q(\cdot)-r_j(\cdot)}}(\mathbb{R}^N)$ ($j=1,\cdots,m$), and $\underset{1\leq j\leq m}{\max}\, r_j^+>p^-$ such that
$$|f(x,u)|\leq \sum_{j=1}^m a_j(x)|u|^{r_j(x)-1}\ \  \text{for a.e.}\ x\in\mathbb{R}^N \ \text{and all}\ u\in\mathbb{R}.$$
\item[$(\mathcal{F}3)$] There exist $B_\epsilon(x_0)$ and $a\in L_+^{\frac{q(\cdot)}{q(\cdot)-p^+}}(B_\epsilon(x_0))$ such that  $\underset{|u|\leq M}{\sup}\, |F(x,u)|\in L^1(B_\epsilon(x_0))$ for each $M>0$, and
$$\lim_{|u|\to\infty}\frac{F(x,u)}{a(x)|u|^{p^+}}=\infty\ \ \text{uniformly for a.e.}\ x\in B_\epsilon(x_0),$$
where $F(x,u):=\int_{0}^{u}f(x,\tau)\,\diff \tau.$
\item[$(\mathcal{F}4)$] There exist $\alpha\in [p^+,q^-)$ and $g\in L^1_+(\R^N)$ such that
$$\alpha F(x,u)-f(x,u)u\leq g(x)\ \  \text{for a.e.}\ x\in\mathbb{R}^N \ \text{and all}\ u\in\mathbb{R}.$$
\end{itemize}
A trivial example for $f(x,u)$ satisfying $(\mathcal{F}1)-(\mathcal{F}4)$ is $f(x,u)=a(x)|u|^{r(x)-2}u$  with $r \in C_+(\mathbb{R}^N)$ such that $p^+<r^-$ and $\underset{x\in\mathbb{R}^N}{\inf}[q(x)-r(x)]>0$, and  $a\in L^{\frac{q(\cdot)}{q(\cdot)-r(\cdot)}}(\mathbb{R}^N)$ with $a(x)>0$ a.e. on some ball $B\in\R^N$.

\medskip
\noindent We say that $ u\in W^{s,p(\cdot,\cdot)}(\R^N) $ is a (weak) solution of problem \eqref{Eq} if
\begin{align*}\label{weak.eq}
\int_{\mathbb{R}^{N}}\int_{\mathbb{R}^{N}}&
\frac{|u(x)-u(y)|^{p(x,y)-2}(u(x)-u(y))(v(x)-v(y))}{|x-y|^{N+sp(x,y)}}
\,\diff x\diff y+\int_{\mathbb{R}^N}|u|^{p(x)-2}uv \diff x\notag\\
&=\int_{\mathbb{R}^N}f(x,u)v \diff x+\lambda\int_{\mathbb{R}^N}|u|^{q(x)-2}uv \diff x,\ \ \forall v\in W^{s,p(\cdot,\cdot)}(\R^N).
\end{align*}
By Theorems~\ref{Theo.critical.imb} and \ref{Theo.compact.imbedding}, this definition is clearly well defined under assumptions $(\mathcal{F}1)-(\mathcal{F}2)$. Our main existence result is stated as follows.

\begin{theorem} \label{V.main1}
	Let $(\mathcal{P}_2)$, $(\mathcal{Q}_2)$ and  $(\mathcal{E}_\infty)$ hold with $p^+<q^-$. If $(\mathcal{F}1)-(\mathcal{F}4)$ are fulfilled, then there exists a sequence $\{\lambda_k\}_{k=1}^\infty$ of positive real numbers with $\lambda_{k+1}<\lambda_k$ for all $k\in\mathbb{N}$ such that for any $\lambda\in (\lambda_{k+1},\lambda_k)$, problem \eqref{Eq} admits at least $k$ pairs of nontrivial solutions.
\end{theorem}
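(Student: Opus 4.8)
The plan is to obtain the solutions of \eqref{Eq} as critical points of the even $C^1$ functional $\mathcal{J}_\lambda:X\to\R$, $X:=W^{s,p(\cdot,\cdot)}(\R^N)$,
\begin{equation*}
\mathcal{J}_\lambda(u):=\int_{\R^N}\int_{\R^N}\frac{|u(x)-u(y)|^{p(x,y)}}{p(x,y)|x-y|^{N+sp(x,y)}}\,\diff x\diff y+\int_{\R^N}\frac{|u|^{p(x)}}{p(x)}\,\diff x-\int_{\R^N}F(x,u)\,\diff x-\lambda\int_{\R^N}\frac{|u|^{q(x)}}{q(x)}\,\diff x.
\end{equation*}
Theorems~\ref{Theo.critical.imb} and \ref{Theo.compact.imbedding} together with $(\mathcal{F}1)$--$(\mathcal{F}2)$ ensure $\mathcal{J}_\lambda$ is well defined and of class $C^1$, with critical points exactly the weak solutions of \eqref{Eq}; $(\mathcal{F}1)$ makes $\mathcal{J}_\lambda$ even and $\mathcal{J}_\lambda(0)=0$. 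Multiplicity will come from a genus-based symmetric min-max, whose only subtle ingredient is a Palais--Smale condition valid below a $\lambda$-dependent threshold $c^*_\lambda$ with $c^*_\lambda\to+\infty$ as $\lambda\to0^+$.

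\emph{Step 1 (compactness below $c^*_\lambda$).} Given a $(PS)_c$ sequence $\{u_n\}$, estimating $\mathcal{J}_\lambda(u_n)-\tfrac1\alpha\langle\mathcal{J}_\lambda'(u_n),u_n\rangle$ via $(\mathcal{F}4)$ (where $p^+\le\alpha<q^-$, so $p^+<q^-$ is used here) bounds $\{u_n\}$ in $X$ and also bounds $\lambda\int_{\R^N}|u_n|^{q(x)}\diff x$ in terms of $c$. Passing to $u_n\rightharpoonup u$ in $X$ and applying Theorems~\ref{CCP}--\ref{CCP2}, one obtains $\mu,\nu$, atoms $\{x_i\}_{i\in I}\subset\mathcal{C}$ with masses $\mu_i,\nu_i$, and $\mu_\infty,\nu_\infty$. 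Testing $\mathcal{J}_\lambda'(u_n)\to0$ against $\psi_{i,\rho}u_n$ (the cut-offs of Lemma~\ref{aux.le.1} centered at $x_i$) and against $\phi_R u_n$ (Lemma~\ref{aux.le.2}), then letting $n\to\infty$ and $\rho\to0^+$ (resp. $R\to\infty$): the $F$-term vanishes by Theorem~\ref{Theo.compact.imbedding}, the variable-exponent nonlocal cross terms vanish by Lemmas~\ref{aux.le.1}--\ref{aux.le.2}, and what remains gives $\mu_i=\lambda\nu_i$ and $\mu_\infty=\lambda\nu_\infty$. Inserting these into \eqref{T.CCP1.mu-nu} (using $\overline{p}_s^\ast/(\overline{p}_s^\ast-\overline{p})=N/(s\overline{p})$) and \eqref{T.CCP2.mu-nu_infty} forces each nonzero $\nu_i$ to satisfy $\nu_i\ge S_q^{N/s}\lambda^{-N/(s\overline{p})}$, and likewise $\nu_\infty=0$ for small $\lambda$ (using $q_\infty\ge\overline{p}$, a consequence of $(\mathcal{P}_2)$ and $(\mathcal{E}_\infty)$). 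Since $(\mathcal{F}4)$ also gives $c=\mathcal{J}_\lambda(u_n)+o(1)\ge(\tfrac1\alpha-\tfrac1{q^-})\lambda(\textstyle\sum_i\nu_i+\nu_\infty)-\tfrac1\alpha\|g\|_{L^1}$, taking $c^*_\lambda:=(\tfrac1\alpha-\tfrac1{q^-})S_q^{N/s}\lambda^{1-N/(s\overline{p})}-\tfrac1\alpha\|g\|_{L^1}$ (note $1-N/(s\overline{p})<0$, so $c^*_\lambda\to\infty$) we conclude that $c<c^*_\lambda$ forces $I=\emptyset$ and $\nu_\infty=0$. Then $u_n\to u$ in $L^{q(\cdot)}(\R^N)$, and by Theorem~\ref{Theo.compact.imbedding} also in each $L^{r_j(\cdot)}(a_j,\R^N)$, hence $\langle\mathcal{A}(u_n)-\mathcal{A}(u),u_n-u\rangle\to0$, where $\mathcal{A}$ is the operator associated with $\mathcal{L}$ plus $|\cdot|^{p(x)-2}(\cdot)$; the $(S_+)$-property of $\mathcal{A}$ on $X$ then yields $u_n\to u$ in $X$.

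\emph{Step 2 (genus min-max and conclusion).} Condition $(\mathcal{F}3)$ (local $p^+$-superlinearity of $F$ on $B_\epsilon(x_0)$) lets one construct, for every $m\in\N$, an $m$-dimensional subspace $E_m\subset C_c^\infty(B_\epsilon(x_0))$ with $\mathcal{J}_\lambda\to-\infty$ on $E_m$ and $\sup_{E_m}\mathcal{J}_\lambda\le\Lambda_m$, the bound $\Lambda_m$ being independent of $\lambda\in(0,1)$. Since $X\hookrightarrow L^{q(\cdot)}(\R^N)$ is merely continuous, the min-max is run on a truncation $\widetilde{\mathcal{J}}_\lambda$ of $\mathcal{J}_\lambda$ that coincides with $\mathcal{J}_\lambda$ on $\{\mathcal{J}_\lambda<c^*_\lambda\}$ and is modified above that level so as to be amenable to the symmetric mountain-pass/genus scheme; with the Krasnoselskii genus $\gamma$ one sets $c_m(\lambda):=\inf_A\sup_{u\in A}\widetilde{\mathcal{J}}_\lambda(u)$ over the appropriate symmetric classes of sets with $\gamma(A)\ge m$, and using $E_m$ one obtains $c_m(\lambda)\le\Lambda_m$. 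The equivariant deformation lemma, valid at levels $<c^*_\lambda$ by Step~1, shows that each such level is a critical value of $\mathcal{J}_\lambda$ and that coincidences of these levels force the critical set to have larger genus, so $k$ pairs of critical points (nontrivial, by a standard argument since $\mathcal{J}_\lambda(0)=0$) are produced once $c_1(\lambda),\dots,c_k(\lambda)<c^*_\lambda$. Since $\Lambda_1,\dots,\Lambda_k$ do not depend on small $\lambda$ while $c^*_\lambda\to\infty$, for each $k$ there is $\widehat\lambda_k>0$ with $c_1(\lambda),\dots,c_k(\lambda)<c^*_\lambda$ for all $0<\lambda<\widehat\lambda_k$; putting $\lambda_k:=\min\{\widehat\lambda_1,\dots,\widehat\lambda_k\}$ and, if necessary, passing to a strictly decreasing subsequence and relabeling gives $\lambda_{k+1}<\lambda_k$ with the stated property on $(\lambda_{k+1},\lambda_k)$.

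\emph{Where the difficulty lies.} The core of the argument is Step~1: not just ruling out concentration at low energy, but extracting the \emph{quantitative} threshold $c^*_\lambda\sim\lambda^{1-N/(s\overline{p})}\to\infty$. This rests on the exact identity $\mu_i=\lambda\nu_i$ at each atom, which requires carefully pairing $\mathcal{J}_\lambda'(u_n)\to0$ with the localizing cut-offs and invoking Lemmas~\ref{aux.le.1}--\ref{aux.le.2} to annihilate the nonlocal cross terms, together with correctly tracking the $\lambda$-scaling through \eqref{T.CCP1.mu-nu} and \eqref{T.CCP2.mu-nu_infty}. A secondary difficulty is designing the truncation $\widetilde{\mathcal{J}}_\lambda$ and the min-max classes in Step~2 so that the critical nonlinearity influences $\widetilde{\mathcal{J}}_\lambda$ only below $c^*_\lambda$, while the genus/deformation machinery still applies to the variable-exponent nonlocal operator $\mathcal{A}$.
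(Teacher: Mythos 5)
Your Step~1 is essentially the paper's Lemma~\ref{le.PS}: bound $\{u_n\}$ via $(\mathcal{F}4)$, pass through Theorems~\ref{CCP}--\ref{CCP2}, derive $\mu_i=\lambda\nu_i$ and $\mu_\infty=\lambda\nu_\infty$ by testing $J_\lambda'(u_n)\to0$ against the cut-offs from Lemmas~\ref{aux.le.1}--\ref{aux.le.2}, and combine with \eqref{T.CCP1.mu-nu}, \eqref{T.CCP2.mu-nu_infty} to get a $\lambda$-dependent threshold that blows up as $\lambda\to0^+$. (The paper states it with $h(x)=\overline{p}/(q(x)-\overline{p})$ and $\min\{\lambda^{-h^+},\lambda^{-h^-}\}$, which is the same scaling as your $\lambda^{1-N/(s\overline p)}$ at atoms.) That part is sound.

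Step~2 is where there is a genuine gap. The paper does \emph{not} truncate the functional: it applies a symmetric-mountain-pass type theorem (Lemma~\ref{le.abs}, a variant of Rabinowitz's Theorem~2.19) directly to $J_\lambda$, and the crucial ingredient you never supply is the analogue of $(\mathcal{J}1)$, i.e.\ a \emph{positive} lower bound $J_\lambda(u)\ge\beta>0$ on a sphere $\partial B_\rho\cap X$ for some finite-codimensional subspace $X$. This is what makes the min-max levels strictly positive and hence the critical points nontrivial; ``nontrivial by a standard argument since $J_\lambda(0)=0$'' does not work when the min-max levels can degenerate to $0$ (and $\inf_{\gamma(A)\ge m}\sup_A J\le 0$ if $A$ is allowed to be a small sphere in a finite-dimensional subspace, since the superlinear terms dominate near the origin only in wrong direction in absence of a coercive structure). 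The paper obtains $(\mathcal{J}1)$ through a nonroutine construction: take a Schauder basis $\{e_n\}$ of $E$, put $X_k:=\{u\colon e_n^*(u)=0,\ n\le k\}$, and show
\begin{equation*}
\xi_k:=\sup_{u\in X_k,\ \|u\|\le1}\ \max_{1\le j\le m}\|u\|_{L^{r_j(\cdot)}(a_j,\R^N)}\longrightarrow 0,
\end{equation*}
which rests essentially on the compact imbedding of Theorem~\ref{Theo.compact.imbedding}; one then picks $k_0$ and a threshold $\lambda_*$ such that $J_\lambda\ge\beta>0$ on $\partial B_{\rho_{k_0}}\cap X_{k_0}$ for all $\lambda\in(0,\lambda_*)$. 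This also produces an \emph{additional} constraint on $\lambda$ that must be intersected with the Palais--Smale threshold when building the final strictly decreasing sequence $\{\lambda_k\}$, which your sketch does not account for.

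Secondary issues: (i) your truncation ``$\widetilde{\mathcal{J}}_\lambda$ coincides with $\mathcal{J}_\lambda$ on $\{\mathcal{J}_\lambda<c^*_\lambda\}$ and is modified above that level'' is not a well-posed operation on a $C^1$ functional and would require substantial work to make precise; the paper avoids it entirely, since Lemma~\ref{le.abs} only requires $\textup{(PS)}_c$ on $[0,L]$ with $L=\sup_{\widetilde{E}}J_\lambda$, which is exactly what Step~1 provides once $\lambda$ is small. (ii) You never specify the admissible symmetric classes; if they are arbitrary sets of genus $\ge m$ the upper bound $c_m\le\Lambda_m$ is vacuous (since $c_m\le0$), while if they are linking classes à la symmetric mountain pass you are back to needing $(\mathcal{J}1)$ and have reproduced Lemma~\ref{le.abs} without verifying its hypotheses. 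Either way, Step~2 is incomplete as written.
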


\subsection{Proof of Theorem~\ref{V.main1} } In order to prove Theorem~\ref{V.main1}, we will make use of the following abstract result for symmetric $C^1$ functionals, which is a variant of Theorem 2.19 in \cite{AR} (see also \cite[Theorem 10.20]{Amb-Mal}).
\begin{lemma}[\cite{AR}]\label{le.abs}
	Let $E = V \oplus X$, where $E$ is a real Banach space and $V$ is finite dimensional.
	Suppose that $J \in C^1(E,\mathbb{R})$ is an even functional satisfying $J(0) = 0$ and
	\begin{itemize}
		\item [$(\mathcal{J}1)$] there exist constants $\rho,\ \beta > 0$ such that $J(u)\geq \beta$ for all $u\in\partial B_\rho\cap X;$
		\item [$(\mathcal{J}2)$] there exists a subspace $\widetilde{E}$ of $E$ with $\operatorname{dim} V < \operatorname{dim} \widetilde{E}<\infty$ and $\{u\in \widetilde{E}:\ J(u)\geq 0 \}$ is bounded in $E$;
		\item [$(\mathcal{J}3)$] for $\beta$ and $\widetilde{E}$ respectively given in $(\mathcal{J}1)$ and $(\mathcal{J}2)$, $J$ satisfies the $\textup{(PS)}_c$ condition for any  $c\in [0,L]$ with $L:=\underset{u\in \widetilde{E}}{\sup}\, J(u)$.
	\end{itemize}
	Then $J$ possesses at least $\operatorname{dim} \widetilde{E}-\operatorname{dim} V$ pairs of nontrivial critical points.
\end{lemma}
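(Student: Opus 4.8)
The plan is to prove Lemma~\ref{le.abs} by a $\mathbb{Z}_2$-equivariant minimax argument based on the Krasnoselskii genus, in the spirit of the symmetric critical point theory of \cite{AR} (see also \cite{Amb-Mal}). Write $k:=\dim V$ and $m:=\dim\widetilde{E}$, so the goal is to produce $m-k$ pairs of nontrivial critical points of $J$. Let $P:E\to V$ be the projection associated with $E=V\oplus X$ (so $\operatorname{codim}X=k$), let $\Sigma$ be the family of closed symmetric sets $A\subset E\setminus\{0\}$, and let $\gamma$ denote the genus on $\Sigma$. Since $J$ is even with $J(0)=0$ and, by $(\mathcal{J}2)$, the set $\{u\in\widetilde{E}:J(u)\ge 0\}$ is bounded, fix $R_0>\rho$ with $\{u\in\widetilde{E}:J(u)\ge 0\}\subset B_{R_0}$, and put $S:=\partial B_\rho\cap X$ and $D:=\{u\in\widetilde{E}:\|u\|\le R_0\}$. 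Then $J<0$ on $\partial_{\widetilde E}D=\widetilde{E}\cap\partial B_{R_0}$, while by $(\mathcal{J}1)$, $J\ge\beta>0$ on $S$, and $\sup_D J=\sup_{\widetilde E}J=L$.

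\textbf{Step 1 (pseudo-index and minimax levels).} Let $\mathcal{H}$ be the set of all odd continuous maps $h:E\to E$ with $h=\mathrm{id}$ on $\{u\in E:J(u)\le 0\}$, and for compact $A\in\Sigma$ define the pseudo-index $i^{*}(A):=\min_{h\in\mathcal{H}}\gamma\big(h(A)\cap S\big)$. Standard properties of the genus give that $i^{*}$ is monotone, $\mathcal H$-invariant, and subadditive in the sense $i^{*}(A\cup B)\le i^{*}(A)+\gamma(\overline B)$. The geometric heart of the argument is the genus linking estimate
$$i^{*}(D)\ \ge\ m-k ,$$
which holds because each $h\in\mathcal H$ is the identity on $\partial_{\widetilde E}D\subset\{J\le 0\}$ and $\rho<R_0$, so a Borsuk--Ulam/topological degree argument applied to $u\mapsto\big(P h(u),\,\|(\mathrm{id}-P)h(u)\|-\rho\big)$ forces $\gamma\big(h(D)\cap S\big)\ge\dim\widetilde E-\dim V$. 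For $j=1,\dots,m-k$ set
$$c_j:=\inf\Big\{\,\sup_{u\in A}J(u)\ :\ A\in\Sigma\ \text{compact},\ i^{*}(A)\ge j\,\Big\} .$$
Since $i^{*}(D)\ge m-k\ge j$, the set $D$ is admissible, whence $c_j\le\sup_D J=L<\infty$; and since $i^{*}(A)\ge 1$ forces $\emptyset\ne\mathrm{id}(A)\cap S=A\cap S$, we get $c_j\ge\inf_S J\ge\beta$. Monotonicity of $i^{*}$ yields $\beta\le c_1\le c_2\le\cdots\le c_{m-k}\le L$.

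\textbf{Step 2 ($c_j$ are critical values with multiplicity).} Fix $j$ and set $c:=c_j\in[\beta,L]\subset[0,L]$; by $(\mathcal{J}3)$, $J$ satisfies $\textup{(PS)}_c$, so $K_c:=\{u\in E:J'(u)=0,\ J(u)=c\}$ is compact and symmetric with $0\notin K_c$ (as $c\ge\beta>0=J(0)$). Assume $c_j=c_{j+1}=\cdots=c_{j+p}=c$ and, for contradiction, $\gamma(K_c)\le p$. Choose a symmetric neighbourhood $N$ of $K_c$ with $0\notin\overline N$ and $\gamma(\overline N)=\gamma(K_c)\le p$. For small $\varepsilon\in(0,\beta)$, the equivariant deformation lemma for even $C^1$ functionals satisfying $\textup{(PS)}_c$ provides an odd map $\eta\in\mathcal H$ (one may take $\eta=\mathrm{id}$ off $J^{-1}([c-\varepsilon,c+\varepsilon])\subset\{J>0\}$) with $\eta\big(\{J\le c+\varepsilon\}\setminus N\big)\subset\{J\le c-\varepsilon\}$. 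Pick a compact symmetric $A$ with $i^{*}(A)\ge j+p$ and $\sup_A J<c+\varepsilon$. Then $A\setminus N$ is compact and symmetric, $\eta(A\setminus N)\subset\{J\le c-\varepsilon\}$, and by $\mathcal H$-invariance and subadditivity, $i^{*}\big(\eta(A\setminus N)\big)\ge i^{*}(A\setminus N)\ge i^{*}(A)-\gamma(\overline N)\ge j$; hence $c_j\le\sup_{\eta(A\setminus N)}J\le c-\varepsilon<c=c_j$, a contradiction. Therefore $\gamma(K_c)\ge p+1$; the case $p=0$ shows each $c_j$ is a critical value, and in general $K_c$ contains at least $p+1$ antipodal pairs $\{u,-u\}$ of nonzero critical points whenever $p+1$ consecutive levels coincide at $c$.

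\textbf{Step 3 (conclusion and the main obstacle).} Collecting the critical points above, the set $\mathcal K:=\bigcup_{j=1}^{m-k}K_{c_j}$ is a compact symmetric subset of $\{u:\beta\le J(u)\le L\}$, so $0\notin\mathcal K$; distinguishing the distinct values among $c_1\le\cdots\le c_{m-k}$ and invoking Step~2 for the blocks where they coincide gives $\gamma(\mathcal K)\ge m-k$, hence $J$ has at least $m-k=\dim\widetilde E-\dim V$ pairs of critical points, all nontrivial since $J>0$ on $\mathcal K$. This proves Lemma~\ref{le.abs}. The step I expect to be the main obstacle is the genus linking estimate $i^{*}(D)\ge m-k$ in Step~1: one must show via a Borsuk--Ulam type argument that for every odd continuous $h$ fixing $\partial_{\widetilde E}D$ the compact symmetric set $h(D)\cap(\partial B_\rho\cap X)$ has genus at least $\dim\widetilde E-\dim V$. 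This is exactly where the finite dimensionality of $V$ (which makes $\operatorname{codim}X$ finite and legitimizes the degree argument for the projection $P$) and the boundedness hypothesis $(\mathcal{J}2)$ (which furnishes the symmetric cap $\partial_{\widetilde E}D$ on which $J<0$, hence on which all admissible deformations are the identity) enter decisively; once this estimate and $(\mathcal{J}3)$ are available, the deformation and index manipulations of Steps~2--3 are routine.
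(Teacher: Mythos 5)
Your proposal reconstructs in detail an argument that the paper does not actually carry out: the paper's ``proof'' of Lemma~\ref{le.abs} is a two-line remark that the proof of Theorem 10.20 in \cite{Amb-Mal} applies verbatim once the finite-dimensional spaces $E^m$ of \cite[Lemma 10.19]{Amb-Mal} are replaced by $\operatorname{span}\{e_1,\dots,e_m\}$ for a basis of $\widetilde E$, the only new point being that all minimax levels lie in $[\beta,L]$, so $\textup{(PS)}_c$ is needed only for $c\in[0,L]$. Your route is the same circle of ideas (genus, equivariant deformation, a Borsuk--Ulam intersection estimate), but packaged via a Benci-type pseudo-index rather than via the Rabinowitz/Ambrosetti--Malchiodi minimax classes built from odd maps of finite-dimensional balls; this makes the roles of $(\mathcal{J}1)$--$(\mathcal{J}3)$ transparent, at the price of having to verify the pseudo-index axioms yourself. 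Your identification of the linking estimate $i^{*}(D)\ge \dim\widetilde E-\dim V$ as the crux is right, and your sketch of it (use $h=\mathrm{id}$ on $\widetilde E\cap\partial B_{R_0}\subset\{J\le 0\}$, then a degree/Borsuk--Ulam argument through the projection $P$ onto $V$) is the standard and correct one.

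There is, however, one step that is not justified as written. You take $\mathcal H$ to be \emph{all} odd continuous maps equal to the identity on $\{J\le 0\}$ and then use the subadditivity $i^{*}(A\cup B)\le i^{*}(A)+\gamma(\overline B)$, equivalently $i^{*}(A\setminus N)\ge i^{*}(A)-\gamma(\overline N)$ in Step 2. The natural proof of this needs $\gamma\bigl(h(A\cap\overline N)\cap S\bigr)\le\gamma(\overline N)$, which holds when $h$ is an odd \emph{homeomorphism} (genus is invariant), but the mapping property of the genus goes the wrong way for a merely continuous odd $h$: the genus of a continuous odd image can strictly increase (there is an odd continuous surjection of the unit circle onto the unit sphere in $\R^3$, obtained by symmetrizing a Peano curve), so $\gamma(h(\overline N))\le\gamma(\overline N)$ cannot be invoked. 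The standard repairs are either to restrict $\mathcal H$ to odd homeomorphisms fixing $\{J\le0\}$ --- legitimate here, since the deformation $\eta$ at a level $c\ge\beta>\epsilon$ is an odd homeomorphism that is the identity off $J^{-1}([c-\epsilon,c+\epsilon])\subset\{J>0\}$, and shrinking $\mathcal H$ only increases $i^{*}$, so the linking estimate (valid for all odd continuous maps, a fortiori for homeomorphisms) is unaffected --- or to do the genus bookkeeping on preimages, removing $h^{-1}(N)$ from the domain as in \cite{AR} and \cite{Amb-Mal}, where $\gamma(h^{-1}(N)\cap\cdot)\le\gamma(\overline N)$ does follow from the mapping property. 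A cosmetic point in the same spirit: $0\in D$, so $D\notin\Sigma$; define $i^{*}$ on all compact symmetric sets (the intersection with $S$ removes the origin). With these adjustments your argument is a complete and correct proof along the lines the paper cites.
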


\begin{proof}
	The proof is similar to that of \cite[Theorem 10.20]{Amb-Mal} for which we take $E^m$ in the proof of \cite[Lemma 10.19]{Amb-Mal} as $E^m=\operatorname{span}\{e_1,\cdots,e_m\}$, where $\{e_k\}_{k=1}^{\operatorname{dim}\widetilde{E}}$ is a basis of $\widetilde{E}.$
\end{proof}
To determine solutions to problem \eqref{Eq}, we will apply Lemma~\ref{le.abs} for $E:= W^{s,p(\cdot,\cdot)}(\R^N)$ endowed with the norm $\|\cdot\|:=\|\cdot\|_{s,p}$ and $J=J_\lambda$, where $J_\lambda:\, E \to\mathbb{R}$ is the energy functional associated with problem \eqref{Eq} defined as
\begin{align}
J_\lambda(u):=\int_{\mathbb{R}^{N}}\int_{\mathbb{R}^{N}}&
\frac{|u(x)-u(y)|^{p(x,y)}}{p(x,y)|x-y|^{N+sp(x,y)}}
\,\diff x \diff y+\int_{\mathbb{R}^N}\frac{1}{\overline{p}}|u|^{\overline{p}}\diff x\notag\\
&\quad \quad -\int_{\mathbb{R}^N}F(x,u)\diff x-\lambda\int_{\mathbb{R}^N}\frac{1}{q(x)}|u|^{q(x)}\diff x,\ \ u\in E.
\end{align}
It is clear that under the assumptions $(\mathcal{F}1)-(\mathcal{F}2)$, $J_\lambda$ is of class $C^1(E,\mathbb{R})$ and its Fr\'echet derivative $J_\lambda^{\prime}: E\to E^\ast$ is given by
\begin{align}\label{e:Phidef}
\langle J_\lambda^{\prime}(u),v\rangle= &\int_{\mathbb{R}^{N}}\int_{\mathbb{R}^{N}}
\frac{|u(x)-u(y)|^{p(x,y)-2}(u(x)-u(y))(v(x)-v(y))}{|x-y|^{N+sp(x,y)}}
\,\diff x \diff y+\int_{\mathbb{R}^N}|u|^{\overline{p}-2}uv\diff x\notag\\
&-\int_{\mathbb{R}^N}f(x,u)v\diff x-\lambda\int_{\mathbb{R}^N}|u|^{q(x)-2}uv\diff x,\ \ u,v\in E.
\end{align}
Here, $E^\ast$ and $\langle \cdot,\cdot \rangle$ denote the dual space of $E$ and the duality pairing between $E$ and $E^\ast,$ respectively. Clearly, $J_\lambda$ is even in $E$, $J_\lambda(0)=0$, and each critical point of $J_\lambda$ is a solution to problem \eqref{Eq}. The next lemma will be utilized for verifying $(\mathcal{J}3)$.

\begin{lemma}\label{le.PS}
	For any given $\lambda>0$, $J_\lambda$ satisfies the $\textup{(PS)}_c$ condition provided
	\begin{equation}\label{PSc}
	c<\left(\frac{1}{\alpha}-\frac{1}{q^-}\right)\min\left\{S_q^{(qh)^+},S_q^{(qh)^-}\right\}\min\left\{\lambda^{-h^+},\lambda^{-h^-}\right\}-\frac{\|g\|_1}{\alpha},
	\end{equation}
where $h(x):=\frac{\overline{p}}{q(x)-\overline{p}}$ for $x\in\mathbb{R}^N$ and $S_q$ is defined as in \eqref{S_q}.
\end{lemma}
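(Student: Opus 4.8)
The strategy is the standard Brezis–Lieb / concentration-compactness argument adapted to the variable-exponent fractional setting, with the CCPs (Theorems~\ref{CCP} and \ref{CCP2}) doing the heavy lifting. Let $\{u_n\}$ be a $\textup{(PS)}_c$ sequence for $J_\lambda$ with $c$ satisfying \eqref{PSc}. The first step is to show $\{u_n\}$ is bounded in $E$. For this I would combine the relation $J_\lambda(u_n)-\frac{1}{\alpha}\langle J_\lambda'(u_n),u_n\rangle = o(1)+o(\|u_n\|)$ with $(\mathcal{F}4)$ (which controls $\alpha F(x,u)-f(x,u)u$ by $g\in L^1$) and the sign of $\left(\frac{1}{\overline p}-\frac1\alpha\right)$ and $\left(\frac{1}{\alpha}-\frac1{q(x)}\right)$, using $p^+<\alpha\le\overline p$ (wait—note $\overline p=p^+$ here by $(\mathcal P_2)$, so in fact $\alpha=\overline p$ is allowed) and $\alpha<q^-$. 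Since the term involving $|u_n|^{q(x)}$ enters with a favorable sign after subtracting $\frac1\alpha\langle J_\lambda',u_n\rangle$, one obtains $M_{\R^N}(u_n)\le C(1+\|u_n\|)$, and Proposition~\ref{norm-modular2} upgrades this to boundedness of $\|u_n\|_{s,p}$.

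Having boundedness, pass to a subsequence so that $u_n\rightharpoonup u$ in $E$, and apply Theorems~\ref{CCP} and \ref{CCP2} to obtain measures $\mu,\nu$, at most countably many atoms $\{x_i\}_{i\in I}\subset\mathcal C$ with weights $\mu_i,\nu_i$ satisfying $S_q\nu_i^{1/\overline p_s^\ast}\le\mu_i^{1/\overline p}$, and the quantities $\mu_\infty,\nu_\infty$ with $S_q\nu_\infty^{1/q_\infty}\le\mu_\infty^{1/\overline p}$ (here $(\mathcal E_\infty)$ is in force). The key step is to show $I=\emptyset$ and $\nu_\infty=0$. Fix $i\in I$; test $\langle J_\lambda'(u_n),\psi_\rho u_n\rangle\to0$ with the cut-offs $\psi_\rho$ of Lemma~\ref{aux.le.1} centered at $x_i$, use Lemma~\ref{aux.le.1} to kill the cross terms $\int\int|u_n|^{p(x,y)}|\psi_\rho(x)-\psi_\rho(y)|^{p(x,y)}|x-y|^{-N-sp(x,y)}$ in the limit $\rho\to0^+$, use the compact subcritical imbedding of Theorem~\ref{Theo.compact.imbedding} to handle the $\int f(x,u_n)\psi_\rho u_n$ term (it tends to $0$ as $\rho\to0$ since $r_j(x)<q(x)$), and conclude $\mu_i\le\lambda\nu_i$ at the atom $x_i$. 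Combined with $S_q\nu_i^{1/\overline p_s^\ast}\le\mu_i^{1/\overline p}$ and $q(x_i)=\overline p_s^\ast$ (because $x_i\in\mathcal C$), this forces either $\nu_i=0$ or $\nu_i\ge\big(S_q^{\overline p_s^\ast}\lambda^{-\overline p_s^\ast/(\overline p_s^\ast-\overline p)}\big)$; writing $h(x)=\overline p/(q(x)-\overline p)$ this lower bound is exactly $S_q^{(qh)^\pm}\lambda^{-h^\pm}$-type, i.e. $\nu_i\ge\min\{S_q^{(qh)^+},S_q^{(qh)^-}\}\min\{\lambda^{-h^+},\lambda^{-h^-}\}$. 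An identical argument with the cut-offs $\phi_R$ of Lemma~\ref{aux.le.2} and Lemma~\ref{aux.le.2} itself gives the same dichotomy for $\nu_\infty$.

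To exclude the ``large mass'' alternative I would derive the energy identity: from $J_\lambda(u_n)\to c$ and $\langle J_\lambda'(u_n),u_n\rangle\to0$ one gets, after subtracting $\frac1\alpha\langle J_\lambda'(u_n),u_n\rangle$ and using $(\mathcal F4)$ together with Theorems~\ref{CCP}–\ref{CCP2} (to account for the mass $\nu_i$ at $x_i$ and $\nu_\infty$ at infinity), the lower bound
\begin{equation*}
c\ \ge\ \left(\frac1\alpha-\frac1{q^-}\right)\lambda\Big(\sum_{i\in I}\nu_i+\nu_\infty\Big)-\frac{\|g\|_1}{\alpha}.
\end{equation*}
If some $\nu_i>0$ (or $\nu_\infty>0$), plugging in the lower bound for that $\nu_i$ gives $c\ge\left(\frac1\alpha-\frac1{q^-}\right)\min\{S_q^{(qh)^+},S_q^{(qh)^-}\}\min\{\lambda^{-h^+},\lambda^{-h^-}\}-\frac{\|g\|_1}{\alpha}$, contradicting \eqref{PSc}. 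Hence $I=\emptyset$ and $\nu_\infty=0$. Then $\nu=|u|^{q(\cdot)}$ and Theorem~\ref{CCP2} gives $\int_{\R^N}|u_n|^{q(x)}\to\int_{\R^N}|u|^{q(x)}$, so $|u_n|^{q(\cdot)-2}u_n\to|u|^{q(\cdot)-2}u$ strongly in the relevant dual space; together with the compact imbedding of Theorem~\ref{Theo.compact.imbedding} for the $f$-term, $\langle J_\lambda'(u_n)-J_\lambda'(u),u_n-u\rangle\to0$, and the standard $(S_+)$-property of the fractional $p(\cdot,\cdot)$-Laplacian operator then yields $u_n\to u$ strongly in $E$.

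The main obstacle is the atom-analysis step: carefully justifying, uniformly in $\rho$ (resp.\ $R$) and then passing $\rho\to0^+$ (resp.\ $R\to\infty$), that all the ``error'' terms produced by testing with $\psi_\rho u_n$ (resp.\ $\phi_R u_n$) vanish — this is precisely where Lemmas~\ref{aux.le.1} and \ref{aux.le.2} are essential, and where the variable exponent forces one to track $\min/\max$ of powers (hence the appearance of $(qh)^\pm$ and $h^\pm$ in \eqref{PSc}) rather than the single clean exponent one has in the constant-exponent case. The bookkeeping between the modular and the norm via Proposition~\ref{norm-modular2}, and keeping the constant $C(\epsilon)$ from \eqref{P.T.CCP1.C(epsilon)} under control while letting $\epsilon\to0^+$ at the end, is the technically delicate part; everything else is routine once the CCPs are in hand.
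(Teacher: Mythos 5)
Your plan follows the paper's own argument step by step: boundedness of the $\textup{(PS)}_c$ sequence via $(\mathcal{F}4)$, invocation of Theorems~\ref{CCP}--\ref{CCP2}, exclusion of atoms and mass at infinity by testing $J_\lambda'$ against $\psi_\rho u_n$ and $\phi_R u_n$ and invoking Lemmas~\ref{aux.le.1}--\ref{aux.le.2}, and finally Brezis--Lieb plus the $(S_+)$-property to upgrade weak to strong convergence. The only slip is a dropped factor of $\lambda$ in the intermediate lower bound: from $\mu_i\le\lambda\nu_i$ and $S_q\nu_i^{1/\overline p_s^\ast}\le\mu_i^{1/\overline p}$ one gets $\lambda\nu_i\ge\min\{S_q^{(qh)^+},S_q^{(qh)^-}\}\min\{\lambda^{-h^+},\lambda^{-h^-}\}$ (not $\nu_i\ge\cdots$), which is what feeds consistently into the energy inequality $c\ge(\tfrac1\alpha-\tfrac1{q^-})\lambda\nu_i-\tfrac{\|g\|_1}{\alpha}$; your stated final contradiction is nevertheless the correct one.
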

\begin{proof}
	Let $\{u_n\}$ be a $\textup{(PS)}_c$ sequence for $J_\lambda$ with $c$ satisfying \eqref{PSc}. We first claim that $\{u_n\}$ is bounded in $E$. Indeed, by $(\mathcal{F}4)$ and invoking Proposition~\ref{norm-modular2} we have that for $n$ large,
	\begin{align*}
		c+1+\|u_n\| &\geq J_\lambda(u_n)-\frac{1}{\alpha}\langle J_\lambda'(u_n) ,u_n \rangle \\
		&\geq \int_{\mathbb{R}^{N}}\int_{\mathbb{R}^{N}}\left(\frac{1}{p(x,y)}-\frac{1}{\alpha}\right)\frac{|u_n(x)-u_n(y)|^{p(x,y)}}{|x-y|^{N+sp(x,y)}}
		\,\diff x \diff y+\int_{\mathbb{R}^N}\left(\frac{1}{\overline{p}}-\frac{1}{\alpha}\right)|u|^{\overline{p}}\diff x\\
		&\quad + \lambda \int_{\R^N}\left(\frac{1}{\alpha}-\frac{1}{q(x)}\right)|u_n|^{q(x)}\diff x+\int_{\R^N}\left[\frac{1}{\alpha}f(x,u_n)u_n-F(x,u_n)\right]\diff x \\
		&\geq \lambda \left(\frac{1}{\alpha}-\frac{1}{q^-}\right)\int_{\R^N}|u_n|^{q(x)}\diff x-\frac{1}{\alpha}\|g\|_1.
	\end{align*}
That is,
\begin{equation}\label{5.PS1.un-boundedness.1}
\lambda \left(\frac{1}{\alpha}-\frac{1}{q^-}\right)\int_{\R^N}|u_n|^{q(x)}\diff x\leq C_1+\|u_n\|,\  \text{ for all } n\in \N \text{ large}.
\end{equation}	
Here and in the remaining proof, $C_i$ ($i\in\N$) denotes a positive constant independent of $n$. On the other hand, by the relation between modular and norm (see Proposition~\ref{norm-modular2}) and $(\mathcal{F}2)$ we have that for $n$ large,
\begin{align*}
\frac{1}{\overline{p}}\left(\|u_n\|^{p^-}-1\right)&\leq J_\lambda(u_n)+\int_{\R^N}F(x,u_n)\diff x+\lambda\int_{\R^N}\frac{1}{q(x)}|u_n|^{q(x)}\diff x\\
&\leq c+1+\sum_{j=1}^m\int_{\R^N}\frac{a_j(x)}{r_j(x)}|u_n|^{r_j(x)}\diff x+\frac{\lambda}{q^-}\int_{\R^N}|u_n|^{q(x)}\diff x.
\end{align*}
Then, by the Young inequality we easily get
\begin{equation}\label{5.PS1.un-boundedness.2}
\|u_n\|^{p^-}\leq C_2\left(1+\int_{\R^N}|u_n|^{q(x)}\diff x\right).
\end{equation}
From \eqref{5.PS1.un-boundedness.1} and \eqref{5.PS1.un-boundedness.2} we obtain
\begin{equation*}
\|u_n\|^{p^-}\leq C_3\left(1+\|u_n\|\right),\  \text{ for all } n\in \N \text{ large}.
\end{equation*}
This implies the boundedness of $\{u_n\}$ since $p^->1$ and hence,
\begin{equation}\label{5.PS1.bound.of.un}
C_*:=\sup_{n\in\N}\,  \int_{\mathbb{R}^{N}}\int_{\mathbb{R}^{N}}
\frac{|u_n(x)-u_n(y)|^{p(x,y)}}{|x-y|^{N+sp(x,y)}}
\,\diff x \diff y+\int_{\mathbb{R}^N}|u_n|^{\overline{p}}\diff x <\infty
\end{equation}
in view of Proposition~\ref{norm-modular2}. Then, invoking  Theorems~\ref{Theo.subcritical.imb}, \ref{CCP} and \ref{CCP2}, up to a subsequence, we have
	\begin{gather}
	u_n(x) \to u(x) \ \ \text{for a.e.} \ \ x\in\mathbb{R}^N,\label{5.a.e1}\\
	u_n \rightharpoonup u  \ \ \text{in} \  E,\label{5.weaI1}\\
	U_n(x)\overset{\ast }{\rightharpoonup }\mu\geq U(x)+ \sum_{i\in I}\mu_i\delta_{x_i}\ \ \text{in}\  \mathcal{M}(\mathbb{R}^N),\label{5.mu1}\\
	|u_n|^{q(x)}\overset{\ast }{\rightharpoonup }\nu=|u|^{q(x)} + \sum_{i\in I}\nu_i\delta_{x_i} \ \ \text{in}\ \mathcal{M}(\mathbb{R}^N),\label{5.nu1}\\
	S_q\, \nu_i^{\frac{1}{\overline{p}_s^\ast}}\le \mu_i^{\frac{1}{\overline{p}}}, \ \ \forall i\in I,\label{5.mu-nu1}
	\end{gather}
	where $U_n(x):=|u_n(x)|^{\overline{p}}+\int_{\R^N}\frac{|u_n(x)-u_n(y)|^{p(x,y)}}{|x-y|^{N+sp(x,y)}}\,\diff y$ and $U(x):=|u(x)|^{\overline{p}}+\int_{\R^N}\frac{|u(x)-u(y)|^{p(x,y)}}{|x-y|^{N+sp(x,y)}}\,\diff y$ for $n\in\mathbb{N}$ and $x\in\mathbb{R}^N$. Moreover, we have
	\begin{gather}
	\underset{n\to\infty}{\lim\sup}\int_{\mathbb{R}^N}U_n(x)\diff x=\mu(\mathbb{R}^N)+\mu_\infty,\label{5.mu_infinity1}\\
	\underset{n\to\infty}{\lim\sup}\int_{\mathbb{R}^N}|u_n|^{q(x)}\diff x=\nu(\mathbb{R}^N)+\nu_\infty,\label{5.nu_infinity1}\\
	S_q\nu_{\infty}^{\frac{1}{q_{\infty}}}\le \mu_{\infty}^{\frac{1}{\overline{p}}}.\label{5.mu_inf-nu_inf}
	\end{gather}
We will show that $I=\emptyset$ and $\nu_\infty=0.$ For this purpose we invoke $(\mathcal{F}4)$ to estimate
	\begin{align*}
	c&=\lim_{n\to\infty}\left[J_\lambda(u_n)-\frac{1}{\alpha}\langle J_\lambda'(u_n) ,u_n\rangle\right]\geq \left(\frac{1}{\alpha}-\frac{1}{q^-}\right)\lambda\,\underset{n\to\infty}{\lim\sup}\int_{\mathbb{R}^N}|u_n|^{q(x)}\diff x-\frac{\|g\|_1}{\alpha}.
		\end{align*}
Combining this with \eqref{5.nu_infinity1} gives
	\begin{equation}\label{5.PS1.est.c}
c\geq \left(\frac{1}{\alpha}-\frac{1}{q^-}\right)\lambda\left[\nu(\mathbb{R}^N)+\nu_\infty\right]-\frac{\|g\|_1}{\alpha}.
	\end{equation}
	We now suppose on the contrary that $I\ne\emptyset$. Let $i\in I$ and for $\rho>0$, define $\psi_{\rho}$ as in Lemma~\ref{aux.le.1} with $x_0$ replaced by $x_i$. For an arbitrary and fixed $\rho$, it is not difficult to see that $\{u_n\psi_\rho\}$ is a bounded sequence in $E$. Hence,
	\begin{align*}
	o_n(1)=\langle J'_\lambda(u_n) ,&u_n \psi_{\rho}\rangle=\int_{\mathbb{R}^N}\psi_{\rho}U_n\diff x-\lambda\int_{\mathbb{R}^N}\psi_{\rho}|u_n|^{q(x)}\diff x-\int_{\mathbb{R}^N}f(x,u_n)u_n\psi_{\rho}\diff x\\
	&+\int_{\mathbb{R}^{N}}\int_{\mathbb{R}^{N}}
	\frac{|u_n(x)-u_n(y)|^{p(x,y)-2}(u_n(x)-u_n(y))u_n(y)(\psi_{\rho}(x)-\psi_{\rho}(y))}{|x-y|^{N+sp(x,y)}}
	\,\diff x \diff y.
	\end{align*}
This yields
\begin{align}\label{PL.Ps.I1-2}
\left|\int_{\R^N}\psi_{\rho}(x)\diff\mu- \lambda\int_{\R^N}\psi_{\rho}(x)\diff\nu\right|\leq\limsup_{n\to \infty}|I_1(n,\psi_{\rho})|+\limsup_{n\to \infty}|I_2(n,\psi_{\rho})|,
\end{align}
where
$$I_1(n,\psi_{\rho}):=\int_{\mathbb{R}^N}f(x,u_n)u_n\psi_{\rho}\diff x$$
and
$$I_2(n,\psi_{\rho}):=\int_{\mathbb{R}^{N}}\int_{\mathbb{R}^{N}}
\frac{|u_n(x)-u_n(y)|^{p(x,y)-2}(u_n(x)-u_n(y))u_n(y)(\psi_{\rho}(x)-\psi_{\rho}(y))}{|x-y|^{N+sp(x,y)}}
\,\diff x \diff y.$$
Note that the boundedness of $\{u_n\}$ in $E$ implies the boundedness of $\{u_n\}$ in $L^{q(\cdot)}(\mathbb{R}^N)$ due to Theorem~\ref{Theo.critical.imb}. Hence, from $(\mathcal{F}2)$ and invoking Propositions~\ref{norm-modular} and \ref{Holder ineq} we have
\begin{align}\label{PL.Ps.I1.1}
|I_1(n,\psi_{\rho})|&\leq \sum_{j=1}^{m}\int_{\mathbb{R}^N}a_j(x)|u_n|^{r_j(x)}\psi_{\rho}\diff x\notag\\
&\leq \sum_{j=1}^{m}2\|a_j\|_{L^{\frac{q(\cdot)}{q(\cdot)-r_j(\cdot)}}(B_{2\rho}(x_i))}\big\||u_n|^{r_j}\big\|_{L^{\frac{q(\cdot)}{r_j(\cdot)}}(B_{2\rho}(x_i))}\notag\\
&\leq \sum_{j=1}^{m}2\|a_j\|_{L^{\frac{q(\cdot)}{q(\cdot)-r_j(\cdot)}}(B_{2\rho}(x_i))}\left[1+\|u_n\|^{r_j^+}_{L^{q(\cdot)}(\mathbb{R}^N)}\right]\notag\\
&\leq C_4\sum_{j=1}^{m}\|a_j\|_{L^{\frac{q(\cdot)}{q(\cdot)-r_j(\cdot)}}(B_{2\rho}(x_i))},\ \  \forall n\in\mathbb{N}.
\end{align}
Here and in the remaining proof, $C_i$ ($i\in\N$) denotes a positive constant independent of $n$ and $\rho.$ From \eqref{PL.Ps.I1.1}, we obtain
\begin{equation}\label{PL.Ps.I1}
\limsup_{\rho\to 0^+}\, \limsup_{n\to\infty}\, |I_1(n,\psi_{\rho})|=0
\end{equation}
In order to estimate $I_2(n,\psi_{\rho})$, let $\delta>0$ be arbitrary and fixed. By \eqref{5.PS1.bound.of.un} and the Young inequality we have
\begin{align}\label{PL.Ps.I2.1}
|I_2(n,\psi_{\rho})|\leq& \delta\int_{\mathbb{R}^{N}}\int_{\mathbb{R}^{N}}
\frac{|u_n(x)-u_n(y)|^{p(x,y)}}{|x-y|^{N+sp(x,y)}}
\,\diff x \diff y\notag\\
&+C_5\int_{\mathbb{R}^{N}}\int_{\mathbb{R}^{N}}|u_n(y)|^{p(x,y)}
\frac{|\psi_{\rho}(x)-\psi_{\rho}(y)|^{p(x,y)}}{|x-y|^{N+sp(x,y)}}
\,\diff x \diff y,\notag\\
\leq&  C_* \delta+C_5\int_{\mathbb{R}^{N}}\int_{\mathbb{R}^{N}}|u_n(y)|^{p(x,y)}
\frac{|\psi_{\rho}(x)-\psi_{\rho}(y)|^{p(x,y)}}{|x-y|^{N+sp(x,y)}}
\,\diff x \diff y.
\end{align}
Taking limit superior in \eqref{PL.Ps.I2.1} as $n\to\infty$ then taking limit superior as $\rho\to 0^+$ with taking Lemma~\ref{aux.le.1} into account, we arrive at
\begin{equation*}
\limsup_{\rho\to 0^+}\, \limsup_{n\to\infty}\, |I_2(n,\psi_{\rho})|\leq C_* \delta.
\end{equation*}
Since $\delta>0$ was chosen arbitrarily we obtain
\begin{equation}\label{PL.Ps.I2}
\limsup_{\rho\to 0^+}\, \limsup_{n\to\infty}\, |I_2(n,\psi_{\rho})|=0.
\end{equation}
Now, by taking limit superior in \eqref{PL.Ps.I1-2} as $\rho\to 0^+$ with taking \eqref{PL.Ps.I1} and \eqref{PL.Ps.I2} into account, we obtain
$$\mu_i=\lambda\nu_i.$$
Plugging this into \eqref{5.mu-nu1} we get
	$$\mu_i\geq S^{\frac{q(x_i)\overline{p}}{q(x_i)-\overline{p}}}\lambda^{-\frac{\overline{p}}{q(x_i)-\overline{p}}}.$$
	This yields
	\begin{equation}\label{5.estmu_i1}
	\lambda\nu_i=\mu_i\geq \min\{S^{(qh)^+},S^{(qh)^-}\}\min\{\lambda^{-h^+},\lambda^{-h^-}\}.
	\end{equation}
	From \eqref{5.estmu_i1} and \eqref{5.PS1.est.c}, we obtain
	$$c\geq \left(\frac{1}{\alpha}-\frac{1}{q^-}\right)\lambda\nu_i-\frac{\|g\|_1}{\alpha}\geq  \left(\frac{1}{\alpha}-\frac{1}{q^-}\right)\min\{S^{(qh)^+},S^{(qh)^-}\}\min\{\lambda^{-h^+},\lambda^{-h^-}\}-\frac{\|g\|_1}{\alpha},$$
	which is a contradiction to \eqref{PSc}, and hence, $I=\emptyset.$ Next, we prove that $\nu_\infty=0.$ Suppose on the contrary that $\nu_\infty>0.$ Let $\phi_R$ be the same as in Lemma~\ref{aux.le.2}. Using a similar argument to that obtained \eqref{PL.Ps.I1-2}, we arrive at
		\begin{align}\label{PL.Ps.I3-4}
	\limsup_{n\to \infty}&\int_{\mathbb{R}^N}\phi_RU_n\diff x\notag\\
	&\leq\lambda\limsup_{n\to \infty}\int_{\mathbb{R}^N}\phi_R|u_n|^{q(x)}\diff x+\limsup_{n\to \infty}|I_3(n,\phi_R)|+\limsup_{n\to \infty}|I_4(n,\phi_R)|,
	\end{align}
	and
		\begin{align}\label{PL.Ps.I3-4'}
	\lambda\limsup_{n\to \infty}&\int_{\mathbb{R}^N}\phi_R|u_n|^{q(x)}\diff x\notag\\
	&\leq\limsup_{n\to \infty}\int_{\mathbb{R}^N}\phi_RU_n\diff x+\limsup_{n\to \infty}|I_3(n,\phi_R)|+\limsup_{n\to \infty}|I_4(n,\phi_R)|,
	\end{align}
	where
	$$I_3(n,\phi_R):=\int_{\mathbb{R}^N}f(x,u_n)u_n\phi_R\diff x$$
	and
	$$I_4(n,\phi_R):=\int_{\mathbb{R}^{N}}\int_{\mathbb{R}^{N}}
	\frac{|u_n(x)-u_n(y)|^{p(x,y)-2}(u_n(x)-u_n(y))u_n(y)(\phi_R(x)-\phi_R(y))}{|x-y|^{N+sp(x,y)}}
	\,\diff x \diff y.$$
	We claim that
	\begin{equation}\label{PL.Ps.I3}
	\lim_{R\to\infty} \limsup_{n\to \infty} |I_3(n,\phi_R)|=0
	\end{equation}
	and
	\begin{equation}\label{PL.Ps.I4}
	\lim_{R\to\infty} \limsup_{n\to \infty} |I_4(n,\phi_R)|=0.
	\end{equation}
	Indeed, the equality \eqref{PL.Ps.I3} can be obtained in a similar fashion to \eqref{PL.Ps.I1}. To prove \eqref{PL.Ps.I4}, we proceed as in \eqref{PL.Ps.I2.1} to get
\begin{align*}
|I_4(n,\phi_R)|\leq C_*\delta +C_5\int_{\mathbb{R}^{N}}\int_{\mathbb{R}^{N}}
|u_n(y)|^{p(x,y)}\frac{|\phi_R(x)-\phi_R(y)|^{p(x,y)}}{|x-y|^{N+sp(x,y)}}
\,\diff x \diff y
\end{align*}
for each $\delta>0$ arbitrary and fixed. Taking limit superior in the last estimate as $n\to\infty$ and then taking limit as $R\to \infty$ with taking Lemma~\ref{aux.le.2} into account, we obtain
$$\lim_{R\to\infty} \limsup_{n\to \infty}|I_4(n,\phi_R)|\leq C_*\delta.$$
Since $\delta>0$ in the last inequality can be taken arbitrarily we deduce \eqref{PL.Ps.I4}. Using \eqref{PL.Ps.I3} and \eqref{PL.Ps.I4} and letting $R\to\infty$ in \eqref{PL.Ps.I3-4} and \eqref{PL.Ps.I3-4'} we obtain
\begin{equation}\label{PL.PS.mu-nu}
\mu_\infty=\lambda\nu_\infty.
\end{equation}
Here we have used \eqref{(13_1)} and the fact that
\begin{equation*}
\nu_\infty=\lim_{R\to\infty}\underset{n\to\infty}{\lim\sup}\int_{\mathbb{R}^N}|u_n|^{q(x)}\phi_R\diff x,
\end{equation*}
which can be seen by using $\phi_R$ in place of $\phi_R^{q(x)}$ in \eqref{(15_1)}. Combining \eqref{PL.PS.mu-nu} with \eqref{5.mu_inf-nu_inf} gives
	\begin{equation}\label{5.PS1.est.mu_infinity}
	\lambda\nu_\infty=\mu_\infty\geq S^{\frac{q_\infty \overline{p}}{q_\infty -\overline{p}}}\lambda^{-\frac{\overline{p}}{q_\infty -\overline{p}}}.
	\end{equation}
The fact that $q_\infty=\lim_{|x|\to\infty} q(x)\in [q^-,q^+]$ yields
	$$(qh)^-\leq \frac{q_\infty \overline{p}}{q_\infty -\overline{p}}\leq (qh)^+\ \ \text{and}\ \  h^-\leq \frac{\overline{p}}{q_\infty -\overline{p}}\leq h^+.$$
From this and \eqref{5.PS1.est.mu_infinity} one has
	$$\lambda\nu_\infty\geq \min\big\{S^{(qh)^+},S^{(qh)^-}\big\}\min\{\lambda^{-h^+},\lambda^{-h^-}\}.$$	
Utilizing this estimate we deduce from \eqref{5.PS1.est.c} that	
	$$c\geq\left(\frac{1}{\alpha}-\frac{1}{q^-}\right)\lambda\nu_\infty-\frac{\|g\|_1}{\alpha}\geq \left(\frac{1}{\alpha}-\frac{1}{q^-}\right)\min\{S^{(qh)^+},S^{(qh)^-}\}\min\{\lambda^{-h^+},\lambda^{-h^-}\}-\frac{\|g\|_1}{\alpha},$$		
which is a contradiction to \eqref{PSc}, and hence; $\nu_\infty=0.$	
	
Combining the facts that $I=\emptyset$ and $\nu_\infty=0$ with \eqref{5.nu1} and \eqref{5.nu_infinity1}, we obtain
	$$\underset{n\to\infty}{\lim\sup}\int_{\mathbb{R}^N}|u_n|^{q(x)}\diff x=\int_{\mathbb{R}^N}|u|^{q(x)}\diff x.$$
	Invoking the Fatou lemma  we get from \eqref{5.a.e1} that
	$$\int_{\mathbb{R}^N}|u|^{q(x)}\diff x\leq \underset{n\to\infty}{\lim\inf}\int_{\mathbb{R}^N}|u_n|^{q(x)}\diff x.$$
Thus,
	$$\lim_{n\to\infty}\int_{\mathbb{R}^N}|u_n|^{q(x)}\diff x=\int_{\mathbb{R}^N}|u|^{q(x)}\diff x.$$
By a Brezis-Lieb type result for the Lebesgue spaces with variable exponents (see e.g., \cite[Lemma 3.9]{HKS}),	it follows from the last equality and \eqref{5.a.e1} that
	$$\int_{\mathbb{R}^N}|u_n-u|^{q(x)}\diff x\to 0,\ \ \text{i.e.,}\ \ u_n\to u\ \ \text{in}\ \ L^{q(\cdot)}(\mathbb{R}^N).$$
Consequently, we have  $\int_{\mathbb{R}^N}|u_n|^{q(x)-2}u_n(u_n-u)\diff x\to 0$ by invoking Proposition~\ref{Holder ineq} and the boundedness of $\{u_n\}$ in $L^{q(\cdot)}(\mathbb{R}^N)$. Also, we easily obtain $\int_{\mathbb{R}^N}f(x,u_n)(u_n-u)\diff x\to 0$ by using $(\mathcal{F}2)$, \eqref{5.weaI1}, Proposition~\ref{Holder ineq} and Theorem~\ref{Theo.compact.imbedding}. We therefore have
	\begin{align*}
	&\int_{\mathbb{R}^{N}}\int_{\mathbb{R}^{N}}
	\frac{|u_n(x)-u_n(y)|^{p(x,y)-2}(u_n(x)-u_n(y))(((u_n-u)(x)-(u_n-u)(y))}{|x-y|^{N+sp(x,y)}}
	\,\diff x \diff y\\
	&\ \ +\int_{\mathbb{R}^N}|u_n|^{\overline{p}-2}u_n(u_n-u)\diff x=\langle J_\lambda'(u_n) ,u_n-u\rangle+\int_{\mathbb{R}^N}f(x,u_n)(u_n-u)\diff x\\
	&\hspace{7cm} +\lambda\int_{\mathbb{R}^N}|u_n|^{q(x)-2}u_n(u_n-u)\diff x\ \ \longrightarrow\ \  0.
	\end{align*}
	Hence, $u_n\to u$ in $E$ in view of \cite[Lemma 4.2 (i)]{Bahrouni.DCDS2018}. The proof is complete.
\end{proof}
We now conclude this section by proving Theorem~\ref{V.main1}.
\begin{proof}[Proof of Theorem~\ref{V.main1}] We will show that conditions $(\mathcal{J}1)-(\mathcal{J}3)$ of Lemma~\ref{le.abs} are fulfilled with $E:= W^{s,p(\cdot,\cdot)}(\R^N)$ and $J=J_\lambda$. In order to verify $(\mathcal{J}1)$, let $\{e_n\}_{n=1}^\infty$ be a Schauder basis of $E$ and let $\{e_n^\ast\}_{n=1}^\infty\subset E^\ast$ be such that for each $n\in\mathbb{N},$
	$$e^\ast_n(u)=\alpha_n\ \  \text{for}\ \ u=\sum_{k=1}^\infty\alpha_ke_k\in E.$$
	For each $k\in\mathbb{N},$ define
$$V_k:=\{u\in E:\ e^\ast_n(u)=0,\ \ \forall\, n>k\},$$
$$X_k:=\{u\in E:\ e^\ast_n(u)=0,\ \ \forall\, n\leq k\},$$
and
\begin{equation}
\xi_k:=\underset{\|u\|\leq 1}{\underset{u\in X_k}{\sup}}\ \max_{1\leq j\leq m}\, \|u\|_{L^{r_j(\cdot)}(a_j,\mathbb{R}^N)}.
\end{equation}
Then,
$$E=V_k \oplus X_k,\ \ \forall k\in\mathbb{N}.$$
Since $X_{k+1}\subset X_k$ ($k\in\mathbb{N}$), we have
$$0\leq \xi_{k+1}\leq\xi_k,\ \ \forall k\in\mathbb{N}.$$
Thus, the sequence $\{\xi_k\}$ converges to some $\xi_\ast$ as $k\to\infty$. We claim that $\xi_\ast=0.$ Indeed, for each $k\in\mathbb{N}$ there exists $u_k\in X_k$ such that $\|u_k\|\leq 1$ and
\begin{equation}\label{5.PT.approx.seq}
0\leq \xi_k\leq \max_{1\leq j\leq m}\,\|u_k\|_{L^{r_j(\cdot)}(a_j,\mathbb{R}^N)}+\frac{1}{k}.
\end{equation}
Since $\{u_k\}$ is bounded in $E$, up to a subsequence we have
\begin{equation}\label{5.PT.weak.conv}
u_k\rightharpoonup u\ \ \text{in}\ \ E
\end{equation}
and hence, by Theorem~\ref{Theo.compact.imbedding},
\begin{equation}\label{5.PT.strong.conv}
u_k\to u\ \ \text{in}\ \ L^{r_j(\cdot)}(a_j,\mathbb{R}^N),\ \ \forall\, j\in\{1,\cdots,m\}.
\end{equation}
From \eqref{5.PT.weak.conv} and the definition of $X_k$ we have that for any $n\in\mathbb{N},$
\begin{equation}
\langle e_n^\ast,u\rangle=\lim_{k\to\infty}\langle e_n^\ast,u_k\rangle=0.
\end{equation}
This yields $u=0$. This fact together with \eqref{5.PT.strong.conv}  and \eqref{5.PT.approx.seq} infer $\xi_\ast=0.$ That is, we have just proved that
\begin{equation}\label{5.PT.lim.xi}
\lim_{k\to\infty}\xi_k=0.
\end{equation}
For $u\in X_k$ with $\|u\|=\rho_k>1,$ by $(\mathcal{F}2)$ and invoking Proposition~\ref{norm-modular} and Theorem~\ref{Theo.compact.imbedding} we have
\begin{align}\label{5.est1J}
J_\lambda(u)&\geq\frac{1}{\overline{p}}\left(\|u\|^{p^{-}}-1\right)-\sum_{j=1}^m\frac{1}{r_j^-}\int_{\mathbb{R}^N}a_j(x)|u|^{r_j(x)}dx -\frac{\lambda}{q^-}\int_{\mathbb{R}^N}|u|^{q(x)}dx \notag\\
&\geq\frac{1}{\overline{p}}\|u\|^{p^{-}}-\frac{1}{\overline{p}}-\frac{1}{r^-}\sum_{j=1}^m\left(\|u\|^{r^+}_{L^{r_j(\cdot)}(a_j,\R^N)}+1\right) -\frac{\lambda}{q^-}\max\left\{\|u\|^{q^+}_{L^{q(\cdot)}(\R^N)},\|u\|^{q^-}_{L^{q(\cdot)}(\R^N)}\right\} \notag\\
&\geq\frac{1}{\overline{p}}\|u\|^{p^{-}}-\frac{m}{r^-}\xi_k^{r^+}\|u\|^{r^+}-\left(\frac{1}{\overline{p}}+\frac{m}{r^-}\right) -\frac{\max\left\{S_q^{-q^+},S_q^{-q^-}\right\}}{q^-}\lambda\, \|u\|^{q^+},
\end{align}
where $r^-:=\underset{1\leq j\leq m}{\min}\, r_j^-$ and $r^+:=\underset{1\leq j\leq m}{\max}\, r_j^+$. 
Let $\rho_k>0$ be such that
$$\frac{m}{r^-}\xi_k^{r^+}\rho_k^{r^+}=\frac{1}{2\overline{p}}\rho_k^{p^{-}}\ \ \text{i.e.,}\ \ \rho_k=\left(\frac{r^-}{2m\overline{p}\xi_k^{r^+}}\right)^{\frac{1}{r^+-p^-}}.$$
Note that $\xi_k\to 0$ as $k\to\infty$ by \eqref{5.PT.lim.xi} and hence, $\xi_k\to \infty$ as $k\to\infty$. Thus, we can fix $k_0\in\mathbb{N}$ such that
\begin{equation}
\rho_{k_0}>1\ \ \text{and}\ \ \frac{1}{2\overline{p}}\rho_{k_0}^{p^{-}}-\left(\frac{1}{\overline{p}}+\frac{m}{r^-}\right)>\frac{1}{4\overline{p}}\rho_{k_0}^{p^{-}}.
\end{equation}
Then, \eqref{5.est1J} yields
\begin{equation}
J_\lambda(u)\geq \frac{1}{4\overline{p}}\rho_{k_0}^{p^{-}}-\frac{\max\left\{S_q^{-q^+}, \, S_q^{-q^-}\right\}}{q^-}\, \rho_{k_0}^{q^+}\lambda,\ \ \forall u\in X_{k_0}\cap B_{\rho_{k_0}}.
\end{equation}
Therefore, by choosing $V:=V_{k_0}$, $X:=X_{k_0}$ and $\lambda_\ast:=\frac{q^-\rho_{k_0}^{p^--q^+}}{4\overline{p}\max\left\{S_q^{-q^+},\,S_q^{-q^-}\right\}}$, we have that for any $\lambda\in (0,\lambda_\ast)$,
$$J_\lambda(u)\geq \beta,\ \ \forall u\in X\cap B_\rho$$
with $\rho=\rho_{k_0}$ and $\beta=\frac{\max\left\{S_q^{-q^+}, \, S_q^{-q^-}\right\}\rho_{k_0}^{q^+}}{q^-}(\lambda_\ast-\lambda).$ That is, $J_\lambda$ verifies $(\mathcal{J}1)$ in Lemma~\ref{le.abs}.

Next, we show that $J_\lambda$ verifies $(\mathcal{J}2)$ and $(\mathcal{J}3)$ in Lemma~\ref{le.abs}. Let $(\gamma_k,\varphi_k)$ be the $k^{th}$ eigenpair of the following eigenvalue problem
\begin{equation*}
\begin{cases}
-\Delta u=\gamma u\ &\text{in}\ B_\epsilon(x_0),\\
u=0\ &\text{on}\ \partial B_\epsilon(x_0).
\end{cases}
\end{equation*}
Extend $\varphi_k(x)$ to $\mathbb{R}^N$ by putting $\varphi_k(x)=0$ for $x\in\mathbb{R}^N\setminus B_\epsilon(x_0)$. Clearly, $\{\varphi_k\}\subset E.$ Define
$$E_k:=\operatorname{span}\, \{\varphi_1,\cdots,\varphi_k\}\ \ (k\in\mathbb{N}).$$
Let $k\in\mathbb{N}$ be arbitrary and fixed. We claim that there exists $R_k>0$ independent of $\lambda$ such that
\begin{equation}\label{5.PT.Rk}
J_\lambda(u)\leq 0,\ \ \forall u\in E_k\setminus B_{R_k}.
\end{equation}
Indeed, since all norms on $E_k$ are equivalent we can find $\zeta_k>0$ such that
\begin{equation}
\zeta_k\|u\|\leq \|u\|_{L^{\overline{p}}(a,B_\epsilon(x_0))},\ \ \forall u\in E_k.
\end{equation}
Choose $\theta_k>0$ such that
\begin{equation}\label{5.PT.theta_k}
\frac{1}{p^-}-\theta_k\zeta_k^{\overline{p}}<0.
\end{equation}
By $(\mathcal{F}3)$, we can choose $M_k>0$ such that
\begin{equation*}
F(x,u)\geq \theta_k a(x)|u|^{\overline{p}}\ \ \text{for a.e.}\ x\in B_\epsilon(x_0)\ \text{and all}\ \ |u|\geq M_k.
\end{equation*}
This infers
\begin{equation}\label{5.PT.F(x,u)}
F(x,u)\geq \theta_k a(x)|u|^{\overline{p}}-\underset{|u|\leq M_k}{\sup}\, |F(x,u)|\ \ \text{for a.e.}\ x\in B_\epsilon(x_0)\ \text{and all}\ \ u\in\mathbb{R}.
\end{equation}
From \eqref{5.PT.theta_k}, \eqref{5.PT.F(x,u)} and invoking Proposition~\ref{norm-modular} again, we have that for $u\in E_k$ with $\|u\|\geq R_k>1$,
\begin{align*}
J_\lambda(u)&\leq \frac{1}{p^-}\|u\|^{\overline{p}}-\theta_k \int_{B_\epsilon(x_0)}a(x)|u|^{\overline{p}}\diff x+\int_{B_\epsilon(x_0)}\underset{|t|\leq M_k}{\sup}\, |F(x,t)|\diff x\notag\\
&\leq \left(\frac{1}{p^-}-\theta_k \zeta_k^{\overline{p}}\right)\|u\|^{\overline{p}}+\int_{B_\epsilon(x_0)}\underset{|t|\leq M_k}{\sup}\, |F(x,t)|\diff x\notag\\
&<0
\end{align*}
provided $R_k$ large enough. Clearly, $R_k$ can be chosen independently of $\lambda$. That is, we have just obtained \eqref{5.PT.Rk}. Noting $J_\lambda(0)=0$, we deduce from \eqref{5.PT.Rk} that
\begin{align*}
&\sup_{u\in E_k}\, J_\lambda(u)=\underset{\|u\|\leq R_k}{\max_{u\in E_k}}\, J_\lambda(u)\\
&\leq \max_{u\in B_{R_k}}\left\{\int_{\mathbb{R}^{N}}\int_{\mathbb{R}^{N}}
\frac{|u(x)-u(y)|^{p(x,y)}}{p(x,y)|x-y|^{N+sp(x,y)}}
\,\diff x \diff y+\int_{\mathbb{R}^N}\frac{1}{\overline{p}}|u|^{\overline{p}}\diff x
-\int_{\mathbb{R}^N}F(x,u)\diff x\right\}=:L_k.
\end{align*}
It is clear that for all $k\in\mathbb{N}$, $L_k$ is independent of $\lambda$ and $L_k\in [0,\infty)$  due to $(\mathcal{F}2)$. Finally, let $\{\lambda_k\}_{k=1}^\infty\subset (0,\lambda_\ast)$ be such that for any $k\in \mathbb{N},$
\begin{equation}
\begin{cases}
L_{k_0+k}<\left(\frac{1}{\alpha}-\frac{1}{q^-}\right)\min\left\{S_q^{(qh)^+},S_q^{(qh)^-}\right\}\min\left\{\lambda_k^{-h^+},\lambda_k^{-h^-}\right\}-\frac{\|g\|_1}{\alpha},\\
\lambda_{k+1}<\lambda_k.
\end{cases}
\end{equation}
Then, for any $\lambda\in (\lambda_{k+1},\lambda_k)$ we have
$$L_{k_0+k}<\left(\frac{1}{\alpha}-\frac{1}{q^-}\right)\min\left\{S_q^{(qh)^+},S_q^{(qh)^-}\right\}\min\left\{\lambda^{-h^+},\lambda^{-h^-}\right\}-\frac{\|g\|_1}{\alpha}$$
and hence, $J_\lambda$ satisfies the $\textup{(PS)}_c$ for any $c\in [0,L_{k_0+k}]$ in view of Lemma~\ref{le.PS}. Thus, $J_\lambda$ satisfies $(\mathcal{J}2)$ and $(\mathcal{J}3)$ with $\widetilde{E}=E_{k_0+k}$ and $L=L_{k_0+k}.$ So, $J_\lambda$ admits at least $\operatorname{dim}\, \widetilde{E}- \operatorname{dim}\, V=k$  pairs of nontrivial critical points in view of Lemma~\ref{le.abs}; hence, problem \eqref{Eq} has at least $k$ pairs of nontrivial solutions. The proof is complete.

\end{proof}
\appendix
\section{An auxiliary result}\label{Appendix}

In this appendix, we state a result for the Radon measures on $\R^N$, which is necessary for proving Theorem \ref{CCP}.

\begin{proposition}\label{Reserverd.Holder}
	Let $p,q,r\in C_+(\R^N)$ such that $
	\inf_{x\in \R^N}\{r(x)-\max\{p(x),q(x)\}\}>0.$ Let $\mu,\nu$ be two finite and nonnegative Radon measures on $\R^N$ such that
	\begin{equation*}\label{star}
	\norm{\phi}_{L_{\nu}^{r(\cdot)}(\R^N)}\le C \max\left\{\norm{\phi}_{L_{\mu}^{p(\cdot)}(\R^N)},\norm{\phi}_{L_{\mu}^{q(\cdot)}(\R^N)}\right\},\ \ \forall\, \phi\in C_c^{\infty}(\R^N),
	\end{equation*}
	for some constant $C>0$. Then there exist an at most countable set $\{x_i\}_{i\in I}$ of distinct point in $\R^N$ and $\{\nu_i\}_{i\in I}\subset (0,\infty)$ such that
	$$
	\nu=\sum_{i\in I}\nu_i\delta_{x_i}.
	$$
	
\end{proposition}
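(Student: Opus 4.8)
The plan is to adapt the classical argument by which a reverse‑Hölder estimate forces a measure to be atomic (Lions' first concentration–compactness lemma) to the present two‑norm, variable‑exponent setting. The first, and most delicate, step is to convert the functional inequality on test functions into a quantitative inequality on balls. Given $x_0\in\R^N$ and $\rho>0$, choose $\phi_n\in C_c^\infty(\R^N)$ with $0\le\phi_n\le1$, $\phi_n\equiv1$ on $\bar B_\rho(x_0)$ and $\operatorname{supp}\phi_n\subset B_{\rho+1/n}(x_0)$. Monotonicity of the Luxemburg norm gives $\|\chi_{\bar B_\rho(x_0)}\|_{L^{r(\cdot)}_\nu}\le\|\phi_n\|_{L^{r(\cdot)}_\nu}$ and $\|\phi_n\|_{L^{t(\cdot)}_\mu}\le\|\chi_{\bar B_{\rho+1/n}(x_0)}\|_{L^{t(\cdot)}_\mu}$ for $t\in\{p,q\}$; letting $n\to\infty$, using that $\mu$ is finite (so $\mu(\bar B_{\rho+1/n}(x_0))\downarrow\mu(\bar B_\rho(x_0))$) together with Proposition~\ref{norm-modular}, the hypothesis yields, for every closed ball $B$,
\begin{equation*}
\|\chi_{B}\|_{L^{r(\cdot)}_\nu(\R^N)}\le C\max\bigl\{\|\chi_{B}\|_{L^{p(\cdot)}_\mu(\R^N)},\ \|\chi_{B}\|_{L^{q(\cdot)}_\mu(\R^N)}\bigr\}.
\end{equation*}
Applying Proposition~\ref{norm-modular} once more on $B$ (and using $r\in C_+(\R^N)$, so $r^+<\infty$), one extracts a uniform $c_0\in(0,1)$ such that for every closed ball $B$ with $\mu(B)\le c_0$,
\begin{equation*}
\nu(B)\ \le\ (C+1)^{r^+}\,\mu(B)^{\,\theta_B},\qquad \theta_B:=\frac{\inf_{B}r}{\max\{\sup_{B}p,\ \sup_{B}q\}}.
\end{equation*}

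Next I would show that the atoms of $\nu$ lie among those of $\mu$. If $\mu(\{x_0\})=0$, then $\mu(\bar B_\rho(x_0))\to0$ as $\rho\to0^+$, so the ball estimate applies for all small $\rho$; moreover $\theta_B\to r(x_0)/\max\{p(x_0),q(x_0)\}>1$ by continuity of $p,q,r$ and the gap hypothesis, so $\theta_B\ge\theta_0$ for some $\theta_0>1$ and all small $\rho$. Hence $\nu(\bar B_\rho(x_0))\le(C+1)^{r^+}\mu(\bar B_\rho(x_0))^{\theta_0}\to0$, i.e. $\nu(\{x_0\})=0$. Thus the atom set of $\nu$ is contained in $D:=\{x\in\R^N:\mu(\{x\})>0\}$, which is at most countable because $\mu$ is finite (each $\{x:\mu(\{x\})>1/k\}$ is finite).

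It then remains to prove $\nu(\R^N\setminus D)=0$, after which $\nu=\nu|_{D}=\sum_{x_i\in D}\nu(\{x_i\})\delta_{x_i}$ and, discarding null coefficients and setting $\nu_i:=\nu(\{x_i\})\in(0,\infty)$ (so $\sum_i\nu_i=\nu(D)\le\nu(\R^N)<\infty$), the conclusion follows. By inner regularity of $\nu$ it suffices to show $\nu(K)=0$ for every compact $K\subset\R^N\setminus D$. Fix such $K$ and $\varepsilon>0$, let $C_{\mathrm{bes}}$ be the Besicovitch covering constant of $\R^N$, and put $\delta:=\varepsilon/(C_{\mathrm{bes}}\mu(\R^N)+1)$. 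For each $x\in K$ we have $\mu(\{x\})=0$, hence $\mu(\bar B_\rho(x))\to0$ and, from the ball estimate with $\theta_0>1$, there is $\rho(x)>0$ with $\nu(\bar B_\rho(x))\le\delta\,\mu(\bar B_\rho(x))$ for all $0<\rho\le\rho(x)$. Selecting by the Besicovitch covering theorem a countable subfamily $\{\bar B_{\rho(x_j)}(x_j)\}_j$ covering $K$ with $\sum_j\chi_{\bar B_{\rho(x_j)}(x_j)}\le C_{\mathrm{bes}}$, we obtain $\nu(K)\le\sum_j\nu(\bar B_{\rho(x_j)}(x_j))\le\delta\sum_j\mu(\bar B_{\rho(x_j)}(x_j))\le\delta C_{\mathrm{bes}}\mu(\R^N)\le\varepsilon$. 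Letting $\varepsilon\to0$ gives $\nu(K)=0$, hence $\nu(\R^N\setminus D)=0$.

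The covering‑theoretic endgame and the modular–norm manipulations are routine; the step requiring genuine care is the first one — producing the ball inequality and, since $p,q,r$ are not assumed uniformly continuous on $\R^N$, keeping all exponent estimates localized around each point (with point‑dependent radii) so that the pointwise gap $r(x)>\max\{p(x),q(x)\}$ can be converted into the exponent $\theta_0>1$. Alternatively, the ball inequality forces $\nu\ll\mu$ with $D_\mu\nu=0$ for $\mu$‑a.e.\ point off $D$, and one may finish via the Lebesgue–Besicovitch differentiation theorem; this is essentially the same argument.
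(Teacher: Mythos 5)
Your argument is correct, and it is essentially the classical Lions/Evans route carried out directly: upgrade the test‑function inequality to a quantitative ball inequality $\nu(B)\le (C+1)^{r^+}\mu(B)^{\theta_B}$ with a locally super‑unit exponent $\theta_B$, deduce that $\nu$ can only charge atoms of $\mu$, and then kill the remainder by a Besicovitch covering (or, equivalently, the Lebesgue--Besicovitch differentiation theorem). The paper does not carry out this argument itself; it delegates to the single‑measure Lemma~\ref{lemma333} (modeled on \cite[Lemma 3.8]{HKS}) and then reduces the two‑measure statement to it ``via the same method as in \cite[Lemma 3.2]{Bonder}.'' So structurally the paper factors through the one‑measure reverse H\"older lemma and then passes to the two‑measure case, whereas your proof is self‑contained and never invokes Lemma~\ref{lemma333}: the ball estimate plus the covering/differentiation step already yields $\nu\ll\mu$ with density vanishing off the atoms of $\mu$, from which countable atomicity is immediate. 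This is a mild economy over the paper's route. The two places that genuinely need care are exactly the ones you flag: (i) passing from $C_c^\infty$ test functions to characteristic functions of closed balls (your monotone approximation, together with continuity of the modular under decreasing sequences for the finite measure $\mu$, is the right way to do this), and (ii) keeping the exponent gap $\theta_B>1$ local, with point‑dependent radii and a point‑dependent $\theta_0(x)>1$, since $p,q,r$ are only continuous on $\R^N$ and nothing forces uniformity across $\R^N$; your Besicovitch step correctly allows both $\rho(x)$ and $\theta_0(x)$ to depend on the point.
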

In order to prove Proposition~\ref{Reserverd.Holder}, we will make use of the following result.
\begin{lemma}\label{lemma333}
	Let $p,q,r\in C_+(\R^N)$ such that $
	\inf_{x\in \R^N}\{r(x)-\max\{p(x),q(x)\}\}>0.$ Let $\nu$ be a  finite nonnegative Radon measure on $\R^N$ such that
	\begin{equation*}\label{3.reverse.ineq}
	\norm{\phi}_{L^{r(\cdot)}_{\nu}(\Bbb R^N)}\le C \max \{\norm{\phi}_{L^{p(\cdot)}_{\nu}(\Bbb R^N)}, \norm{\phi}_{L^{q(\cdot)}_{\nu}(\Bbb R^N)}\} ,\ \ \forall\, \phi\in C_c^{\infty}(\R^N).
	\end{equation*}
	Then $\nu=0$ or there exist $\{x_i\}_{i=1}^n$ of distinct points in $\mathbb{R}^N$ and
	$\{\nu_i\}_{i=1}^n\subset (0,\infty)$ such that
	$\nu=\sum_{i=1}^n\nu_i\delta_{x_i}$.
\end{lemma}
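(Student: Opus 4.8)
The plan is to adapt the classical argument of Lions for the reverse Hölder inequality on measures to the variable-exponent setting, using the modular/norm relations of Proposition~\ref{norm-modular}. First I would dispose of the trivial case: if $\nu(\R^N)=0$ we are done, so assume $\nu(\R^N)>0$. The first real step is to upgrade the hypothesis into a clean inequality valid for all bounded Borel functions, not just $\phi\in C_c^\infty(\R^N)$: by a standard density/truncation argument (approximating indicator functions of Borel sets by smooth compactly supported functions in $L^{r(\cdot)}_\nu$ and in $L^{p(\cdot)}_\nu, L^{q(\cdot)}_\nu$, all of which embed continuously into one another on a finite measure space by Proposition~\ref{est.Deining}) one gets
\begin{equation*}
\norm{\phi}_{L^{r(\cdot)}_{\nu}(\R^N)}\le C\max\left\{\norm{\phi}_{L^{p(\cdot)}_{\nu}(\R^N)},\norm{\phi}_{L^{q(\cdot)}_{\nu}(\R^N)}\right\}
\end{equation*}
for every bounded Borel $\phi$. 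Applying this with $\phi=\chi_E$ for a Borel set $E$ and using Proposition~\ref{norm-modular} (parts (iii)--(iv)) to convert norms into modulars, one obtains an inequality of the form
\begin{equation*}
\nu(E)^{1/r_E}\le C'\,\nu(E)^{1/s_E},
\end{equation*}
where $r_E$ is bounded above by $r^+$ restricted to a neighborhood of $E$ and $s_E$ is bounded below by $\min\{p^-,q^-\}$ on that neighborhood, with the precise exponents depending on whether $\nu(E)$ is $\ge1$ or $<1$; the key point is that $r(x)>\max\{p(x),q(x)\}$ uniformly gives a genuine gap $r^+_{\text{loc}}>\max\{p^-_{\text{loc}},q^-_{\text{loc}}\}$ on small balls.

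The heart of the argument is the following dichotomy, applied to small balls. Fix $x_0\in\R^N$ and let $\delta:=\inf_{x}\{r(x)-\max\{p(x),q(x)\}\}>0$. By continuity of $p,q,r$, choose $\rho_0>0$ so small that on $\overline{B_{\rho_0}(x_0)}$ we have $r^-_{\text{loc}}\ge \max\{p^+_{\text{loc}},q^+_{\text{loc}}\}+\delta/2$. Then for any ball $B\subseteq B_{\rho_0}(x_0)$ with $\nu(B)<1$, the inequality above with $\phi=\chi_B$, combined with Proposition~\ref{norm-modular}(iv), forces
\begin{equation*}
\nu(B)^{1/r^-_{\text{loc}}}\le C'\,\nu(B)^{1/p^+_{\text{loc}}}\quad\text{(and similarly with }q^+_{\text{loc}}\text{)},
\end{equation*}
hence $\nu(B)^{1/r^-_{\text{loc}}-1/p^+_{\text{loc}}}\le C'$, i.e. $\nu(B)\ge c_0$ for some constant $c_0=c_0(x_0)>0$ independent of $B$, whenever $\nu(B)>0$. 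Letting the radius of $B$ shrink to $0$ around any point $z\in B_{\rho_0}(x_0)$, this shows that either $\nu(\{z\})\ge c_0$ (an atom) or $\nu$ vanishes on a neighborhood of $z$. Therefore the support of $\nu$ inside $B_{\rho_0}(x_0)$ consists only of atoms, each of mass at least $c_0$; since $\nu$ is finite, there are at most $\nu(\R^N)/c_0$ of them. Covering $\R^N$ by countably many such balls $B_{\rho_0}(x_0)$ and observing that the total mass bounds the total number of atoms (the constant $c_0$ can be taken uniform on any fixed compact set, and atoms far apart cannot accumulate by finiteness), we conclude $\nu=\sum_{i=1}^n\nu_i\delta_{x_i}$ with $\nu_i\ge c_0$; in fact $n\le\nu(\R^N)/\inf_i c_0$ is finite.

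The main obstacle I anticipate is the bookkeeping in converting the norm inequality for characteristic functions into the scalar inequality $\nu(B)\ge c_0$ cleanly: one must carefully track which case of Proposition~\ref{norm-modular} applies (the exponent in the power of $\nu(B)$ flips depending on whether the relevant Luxemburg norm is $\ge1$ or $<1$), and one must ensure the threshold $\rho_0$ and constant $c_0$ can be chosen locally uniformly so that the covering argument produces only finitely many atoms rather than a countable set with an accumulation point. The uniform-gap hypothesis $\inf_x\{r(x)-\max\{p(x),q(x)\}\}>0$ is exactly what rescues this: it gives a positive lower bound on the exponent defect $1/s_{\text{loc}}-1/r_{\text{loc}}$ on every small ball, and via continuity this bound is locally uniform. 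The passage from $C_c^\infty$ test functions to Borel sets is routine but should be stated: approximate $\chi_E$ in the strong topology of each variable-exponent space using Proposition~\ref{est.Deining} to control the lower-exponent norms by the higher ones on the finite measure $\nu$, then pass to the limit in the inequality.
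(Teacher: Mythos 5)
Your proposal is correct and follows the standard Lions-type reverse-H\"older argument: test with smooth approximations of characteristic functions of small balls, use the norm/modular relations of Proposition~\ref{norm-modular} together with the uniform exponent gap to obtain a uniform positive lower bound on $\nu(B)$ whenever $B$ is small and $\nu(B)\in(0,1)$, and conclude that $\nu$ is a finite sum of atoms. This is exactly the approach the paper endorses; it does not reproduce the proof but defers to \cite[Lemma 3.8]{HKS}, whose proof proceeds in the same way.
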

The proof of Lemma~\ref{lemma333} is similar to that of \cite[Lemma 3.8]{HKS} and using this result we can prove Proposition~\ref{Reserverd.Holder} via the same method as in \cite[Lemma 3.2]{Bonder} and we leave the proofs to the reader.

\section*{Acknowledgements}
The second author was supported by the Basic Science Research Program through the National Research Foundation of Korea (NRF) funded by the Ministry of Education (NRF-2019R1F1A1057775).


\begin{thebibliography}{00}
	
\bibitem{Alv-Bar2013} C.O. Alves, J.L.P. Barreiro, Existence and multiplicity of solutions for a $p(x)$-Laplacian equation with critical growth, J. Math. Anal. Appl. 403 (2013) 143--154.


\bibitem{AR} A. Ambrosetti, P. H. Rabinowitz,
Dual variational methods in critical point theory and applications, J. Funct. Anal. 14 (1973) 349--381.

\bibitem{Amb-Mal}A. Ambrosetti, A. Malchiodi, Nonlinear Analysis and Semilinear Elliptic Problems, Cambridge Studies in Advanced Mathematics 104, Cambridge University Press, Cambridge, 2007.


\bibitem{Ambrosio.2018} V. Ambrosio, Fractional $p\&q$ Laplacian problems in $\mathbb{R}^N$ with critical growth, Preprint, arXiv:1801.10449, 2018.

\bibitem{Bahrouni.JMAA2018} A. Bahrouni,
Comparison and sub-supersolution principles for the fractional $p(x)$-Laplacian, J. Math. Anal. Appl. 458 (2018) 1363--1372.

\bibitem{Bahrouni.DCDS2018} A. Bahrouni, V. R\u{a}dulescu, On a new fractional Sobolev space and applications to nonlocal variational problems with variable exponent, Discrete Contin. Dyn. Syst. Ser. S 11(3) (2018) 379--389.

\bibitem{BTW} A.K. Ben-Naouma, C. Troestler, M. Willem,
Extrema problems with critical Sobolev exponents on unbounded domains, Nonlinear Anal. 26 (1996) 823--833.



\bibitem{Bha-Muk} M. Bhakta, D. Mukherjee, Multiplicity results for $(p,q)$ fractional elliptic equations involving critical nonlinearities, Adv. Differential Equations 24(3/4) (2019) 185--228.

\bibitem{BCS}G. Bianchi, J. Chabrowski, A. Szulkin,
Symmetric solutions of an elliptic equation with a nonlinearity involving critical Sobolev exponent, Nonlinear Anal. 25 (1995) 41--59.


\bibitem{Bonder} J.F. Bonder, A. Silva,
Concentration-compactness principle for variable exponent spaces and applications, Electron. J. Differential Equations 141 (2010) 1--18.

\bibitem{Bon-Sai-Sil} J.F. Bonder, N. Saintier, A. Silva, The concentration-compactness principle for fractional order Sobolev spaces in unbounded-domains and applications to the generalized fractional Brezis-Nirenberg problem, Nonlinear Differ. Equ. Appl. (2018) 25:52.

\bibitem{Brezis} H. Brezis, L. Nirenberg,
Positive solutions of nonlinear elliptic equations involving critical Sobolev exponents,
Comm. Pure Appl. Math.  36(4)  (1983) 437--477

\bibitem{Caf-Roq-Sir.2010} L. Caffarelli, J.-M. Roquejoffre, Y. Sire, Variational problems for free boundaries for the fractional Laplacian, J. Eur. Math. Soc. (JEMS), 12 (2010) 1151--1179.

\bibitem{Caf-Sal-Sil.2008} L. Caffarelli, S. Salsa, L. Silvestre, Regularity estimates for the solution and the free boundary of the obstacle problem for the fractional Laplacian, Invent. Math., 171 (2008) 425--461.


\bibitem{Caf-Sil.2007} L. Caffarelli, L. Silvestre, An extension problem related to the fractional Laplacian, Comm. Partial Differential Equations, 32 (2007) 1245--1260.







    \bibitem{Chabrowski}J. Chabrowski,
	Concentration-compactness principle at infinity and semilinear elliptic equations involving critical and subcritical Sobolev exponents, Calc. Var. Partial Differential Equations 3(4) (1995) 493--512.

\bibitem{DPV}
E.\ Di Nezza, G.\ Palatucci, E.\ Valdinoci,
\newblock Hitchhiker's guide to the fractional Sobolev spaces,
Bull. Sci. Math. 136 (2012) 521--573.


\bibitem{DHHR} L. Diening, P. Harjulehto, P. Hasto, M. Ruzicka, Lebesgue and
Sobolev spaces with variable exponents, Lecture Notes in Mathematics, vol. 2017, Springer-Verlag, Heidelberg, 2011.

\bibitem{Fis-Mol-Bis} A. Fiscella, G. Molica Bisci, R. Servadei, Multiplicity results for fractional Laplace problems with critical growth, Manuscripta Math. 155(3-4) (2018) 369--388.

\bibitem{Fonseca}I. Fonseca, G. Leoni,
Modern Methods in the Calculus of Variations: $L^p$ Spaces, Springer, 2007.

	\bibitem{Fu}Y. Fu, The principle of concentration compactness in $L^{p(x)}$ spaces and its application, Nonlinear Anal. 71 (2009) 1876--1892.


\bibitem{Fu-Zhang}Y. Fu, X. Zhang,
	Multiple solutions for a class of $p(x)$-Laplacian equations in involving the critical exponent, Proc. R. Soc. Lond. Ser. A Math. Phys. Eng. Sci. 466(2118) (2010) 1667--1686.
	

\bibitem{GaPer} J. Garc\'{\i}a Azorero, I. Peral Alonso,
Multiplicity of solutions for elliptic problems with critical exponent or with a nonsymmetric term, Trans. Amer. Math. Soc. 323(2) (1991) 877--895.



\bibitem{Ho-Sim.2016}K. Ho, I. Sim,
On degenerate $p(x)$-Laplace equations involving critical growth with two parameters, Nonlinear Anal. 132 (2016) 95--114.

\bibitem{HK} K. Ho, Y.-H. Kim, A-priori bounds and multiplicity of solutions for nonlinear elliptic problems involving the fractional $p(\cdot)$-Laplacian, Nonlinear Anal. 188 (2019) 179--201.

\bibitem{HKS} K. Ho, Y.-H. Kim, I. Sim,  Existence results for Schr\"odinger $p(\cdot)$-Laplace equations involving critical growth in $\mathbb{R}^N$, Nonlinear Anal. 182 (2019) 20-44.







\bibitem{Kaufmann} U. Kaufmann, J.D. Rossi, R. Vidal, Fractional Sobolev spaces with variable exponents and fractional $p(x)$-Laplacians, Electron. J. Qual. Theory Differ. Equ. 2017, Paper No. 76, 10 pp.

















\bibitem{FZ} X. Fan, D. Zhao,
On the spaces $L^{p(x)}(\Omega)$ and $W^{m,p(x)}(\Omega)$, J. Math.
Anal. Appl. 263 (2001) 424--446.







\bibitem{KR} O. Kov\'{a}\v{c}ik, J. R\'{a}kosn\'{i}k,
On spaces $L^{p(x)}$ and $W^{k,p(x)}$, Czechoslovak Math. J. 41
(1991) 592--618.



	\bibitem{Lions1} P.L. Lions,
The concentration-compactness principle in the calculus of variations. The locally compact case, part 1, Ann. Inst. H. Poincar\'{e} Anal. Non Lin\'{e}aire 1(2) (1984), 109--145.



\bibitem{Lions3} P.L. Lions,
The concentration-compactness principle in the calculus of variations. The limit case, part 1, Rev. Mat. Iberoamericana 1(1) (1985) 145--201.







\bibitem{Rad-Rep}V.D. R\u{a}dulescu, D.D. Repov\v{s}, Partial Differential Equations with Variable Exponents: Variational Methods and Qualitative Analysis, CRC Press, Taylor and Francis Group, Boca Raton, FL, 2015.


\bibitem{R} M. R\.{u}\v{z}i\v{c}ka, Electrorheological Fluids: Modeling and Mathematical Theory, in: Lecture Notes in Mathematics, vol. 1748, Springer, Berlin, 2000.





\bibitem{ZZR2016} X. Zhang, B. Zhang, D. Repov\v{s}, Existence and symmetry of solutions for critical fractional Schr\"odinger equations with bounded potentials, Nonlinear Anal. 142 (2016) 48–-68.


\end{thebibliography}
\end{document}